\theoremstyle{plain}
\newtheorem{thm}{Theorem}[section]
\newtheorem{cor}[thm]{Corollary}
\newtheorem{prop}[thm]{Proposition}
\newtheorem{lem}[thm]{Lemma}
\newenvironment{customThm}[1]{\paragraph*{\textbf{Theorem #1.}}\itshape}{\par}
\newenvironment{customCor}[1]{\paragraph*{\textbf{Corollary #1.}}\itshape}{\par}
\theoremstyle{definition}
\newtheorem{defn}[thm]{Definition}
\theoremstyle{remark}
\newtheorem{rmk}[thm]{Remark}
\newcommand{\bbC}{\mathbb{C}} %complex numbers
\newcommand{\bbH}{\mathbb{H}} %hyperbolic space
\newcommand{\bbN}{\mathbb{N}} %natural numbers
\newcommand{\bbP}{\mathbb{P}}
\newcommand{\bbR}{\mathbb{R}} %reals
\newcommand{\bbZ}{\mathbb{Z}} %integers
\newcommand{\PSL}{\text{PSL}}
\newcommand*{\defeq}{\mathrel{\vcenter{\baselineskip0.5ex \lineskiplimit0pt
			\hbox{\scriptsize.}\hbox{\scriptsize.}}}%
	=}
\let\c@equation\c@thm
\numberwithin{equation}{section}
\title{Prime number theorems for Basmajian-type identities}
\author{Yan Mary He}
\address{Department of Mathematics\\
  University of Toronto\\
  Toronto, On M5S 2E4}
\email{yanmary.he@mail.utoronto.ca} 
\date{\today}
\begin{document}
	
\begin{abstract}
We obtain asymptotic counting results with error terms for complex orthospectrum for Schottky groups and orbit counting function for quadratic polynomials. Moreover, we prove equidistribution of holonomy associated to these dynamical systems. Our results are obtained by considering generalized $L$-functions coming from the Basmajian-type identities introduced by the author in \cite{He}. We study the associated summatory functions using tools from analytic number theory and Thermodynamic Formalism, namely the Perron's formula and a Dolgopyat-type estimate on the spectrum of transfer operators.
\end{abstract}

\maketitle

\section{Introduction}
The Prime Number Theorem proved without error term by Hadamard and de la Valleé-Poussin in 1896 and with (classical) error term by de la Valleé-Poussin in 1899 gives the asymptotic distribution of prime numbers. If we consider the Gaussian primes, the prime ideals in $\bbZ[i]$, the Prime Number Theorem implies further that the arguments of Gaussian primes equidistribute over the circle.

In the light of the analogy between primitive closed geodesics and prime numbers,  the Prime Geodesic Theorems have also been extensively studied over the past 50 years using methods of Selberg zeta function, Selberg trace formula and Ruelle dynamical zeta function. For compact hyperbolic surfaces, Huber \cite{Hu} \cite{Hu1} gave the asymptotic estimates with error terms for the number of closed primitive geodesics and geodesic arcs. Margulis \cite{Margulis} generalized the main terms to compact manifolds of variable negative curvature and Pollicott-Sharp gave the error estimates for compact surfaces of variable negative curvature in \cite{PolSharp2}. For further generalizations and development, see Sarnak's thesis \cite{Sar}, Guilleop\'e \cite{Gui}, Lally \cite{La} and Naud \cite{Naud}.

In this paper, we study asymptotic distribution of orbits in familiar one-dimensional complex dynamical systems, namely Schottky groups and quadratic polynomials. In particular, we consider the orbit of a geodesic $\ell$ connecting two points in the limit set under the iterated function system (IFS) generated by the inverse branches of the Bowen-Series map or a quadratic polynomial. For each map $f$ in the IFS, we associate a {\it complex length} $L(f)$ to the image $f(\ell)$ whose real part measures the distance between $\ell$ and $f(\ell)$(or the length of $f(\ell)$) and whose imaginary part measures the angle that $f$ rotates $\ell$. Our main goal is to obtain the asymptotic distribution of the cardinality of the set $S(x) = \{f ~|~ |L(f)| \ge 1/x \}$ and show that as $x \to \infty$, the angles of $f(\ell)$ for $f \in S(x)$ equidistribute over the circle.

Our results are obtained via a unified approach by studying the summatory functions associated to some (generalized) $L$-functions, whose special values are the right-hand-side series of Basmajian-type identities which were introduced by the author in \cite{He}. 

These results are of independent interest in hyperbolic geometry and complex dynamics. We discuss them individually in details.

\subsection{Schottky groups}
In \cite{Bas}, Basmajian proved the following identity for a compact hyperbolic surface $S$ with geodesic boundary:
\begin{equation}\label{bas_torus}
\text{length}(\partial S) = \sum_{\gamma} \log \coth^2 \left(\dfrac{\text{length}(\gamma)}{2}\right) = \sum_{\gamma} \log c_{\gamma}
\end{equation}
where the sum is taken over all orthogeodesics $\gamma$ in $S$ (i.e. properly immersed geodesic arcs perpendicular to $\partial S$ at both ends).

Algebraically, to every orthogeodesic $\gamma$, we can associate a unique word $w$ in the fundamental group $\pi_1S = F_n$ for some $n$ and rewrite the term $c_{\gamma}$ in terms of $w$ which we denote by $c_w$. Denote by $\mathscr{L}$ the set of words representing orthogeodesics. If we deform the hyperbolic structure of $S$, i.e. a discrete faithful representation $\rho_0 : F_n \to \PSL(2,\bbR)$ to a Schottky representation $\rho : F_n \to \PSL(2,\bbC)$, we would still have the {\it formal} series $\sum_{w \in \mathscr{L}} \log c_w$.

In the spirit of treating geodesics as primes, for any Schottky group, we consider the following generalized $L$-function
$$F(s, \chi)= \sum_{|w|=n} \chi\left(\frac{\log c_w}{|\log c_w|}\right) |\log c_w|^{s}$$
where $\chi: S^1 \to S^1$ is a unitary character given by $\chi(z) = z^m$ for some integer $m$. To an $L$-function, there is associated {\it summatory functions} $$M_m(x) \defeq \sum_{|\log c_w|^{-1}\le x} \left(\frac{\log c_w}{|\log c_w|}\right)^m.$$ In particular, if $m=0$, $$M_0(x) \defeq \sum_{|\log c_w|^{-1}\le x} 1 = \text{Card}\{w\in \mathscr{L} ~|~ |\log c_w | \ge 1/x\}.$$
Our main result is the following theorem regarding the asymptotic distribution of $M_m(x)$. 

\vspace{0.3cm}
\begin{customThm}{5.1}
Let $\delta$ be the Hausdorff dimension of the limit set.
\begin{enumerate}
	\item For $m=0$, there exists $C_1>0, 0< d_1 <\delta$ such that for any $x \ge 1$, $$M_0(x) = \text{Card }\{w ~|~ |\log c_w | \ge 1/x\} = C_1 x^{\delta}+ O(x^{d_1}).$$
	\item For $m \neq 0$ and non-Fuchsian Schottky groups, there exists $0< d_1 <\delta, 1<\beta<2$ such that for any $x \ge 1$, $$M_m(x) = O((|m|+1)^{\beta}x^{d_1}).$$
\end{enumerate}
\end{customThm}
\vspace{0.3cm}

The geometric interpretation of Theorem \ref{thm_error} establishes the asymptotic counting of complex orthospectrum and equidistribution of holonomy as stated in the following corollaries. 

Denote $N(x) \defeq \text{Card} \{w \in \mathscr{L}: Re(|\gamma|) \le x \}$ i.e. the number of orthogeodesics with real part of complex length bounded by $x$. 
\vspace{0.3cm}
\begin{customCor}{5.2 \& 5.3}
Let $\delta$ be the Hausdorff dimension of the limit set.
\begin{enumerate}
\item There exist constants $0< d_1 <\delta$ and $C_1 >0$ such that $$N(x) = C_1e^{\delta x} + O(e^{d_1 x})$$ as $x \to \infty$.
\item For any non-Fuchsian Schottky group, there exist $C>0$ and $0< d_1 <\delta$ such that for any $f \in C^2(S^1)$, we have $$\sum_{|\log c_w|^{-1}\le x} f\left(\frac{\log c_w}{|\log c_w|}\right) = Cx^{\delta}\int_0^1 f(e^{2\pi i t})dt + O(t^{d_1})$$ where the implied constant depends on the $C^2$-norm of $f$.
\end{enumerate}
\end{customCor}
\vspace{0.3cm}

The main term $N(x) \sim Ce^{\delta x}$ in part (1) has also appeared in Parkkonen-Paulin \cite{ParPa} and Pollicott \cite{Pol}, where it was proved by using Bowen-Margulis measures in \cite{ParPa} and Poincar\'e-type series and Theormodynamic Formalism in \cite{Pol}.

\subsection{Quadratic polynomials}
Consider quadratic polynomials $f_c(z) = z^2+c$ with $c$ lying outside of the Mandelbrot set $\mathcal{M}$. Let $T_1$ and $T_2$ be the two branches of $f_c^{-1}$ and $z_1$ be the fixed point of $T_1$. Recall that the Basmajian-type identity for $f_c$ is given by
\begin{equation}\label{eq_poly}
2z_1 = \sum_{w \in \{T_1,T_2\}^*} (-1)^\eta \Big( w(T_1(-z_1)) - w(T_2(-z_1)) \Big) = \sum_{w \in \{T_1,T_2\}^*} w(I)
\end{equation}
where $\eta$ is the number of $T_2$'s in the word $w$ \cite{He}.

For each complex parameter $c \notin \mathcal{M}$, we consider the following $L$-function
$$G(s,m) = \sum_{w \in \{T_1,T_2\}^*} \left(\frac{w(I)}{|w(I)|}\right)^m|w(I)|^s$$ for $s \in \bbC$ and $m \in \bbZ$ so that $G(1,1)$ equals (\ref{eq_poly}). Again, our main theorem is the following asymptotic formula for the summatory functions
$$P_m(x) \defeq \sum_{|w(I)|^{-1}\le x} \left(\dfrac{w(I)}{|w(I)|}\right)^m.$$ 

\vspace{0.3cm}
\begin{customThm}{6.11}
	Let $\delta$ be the Hausdorff dimension of the Julia set.
	\begin{enumerate}
		\item For $m=0$, there exists $C_2>0, 0< d_2 <\delta$ such that for any $x \ge 1$, $$P_0(x) = \text{Card }\{w ~|~ |\log c_w | \ge 1/x\} = C_2 x^{\delta}+ O(x^{d_2}).$$
		\item For $m \neq 0$, there exists $0< d_2 <\delta, 1<\beta<2$ such that for any $x \ge 1$, $$P_m(x) = O((|m|+1)^{\beta}x^{d_2}).$$
	\end{enumerate}
\end{customThm}
\vspace{0.3cm}

\begin{customCor}{6.12}
For any $c \notin \mathcal{M}$, there exist $C>0$ and $0< d_2 <\delta$ such that for any $f \in C^2(S^1)$, we have $$\sum_{|w(I)|^{-1}\le x} f\left(\frac{w(I)}{|w(I)|}\right) = Cx^{\delta}\int_0^1 f(e^{2\pi i t})dt + O(t^{d_2})$$ where the implied constant depends on the $C^2$-norm of $f$. Here $\delta$ is the Haudorff dimension of the Julia set.
\end{customCor}
\vspace{0.3cm}

Oh-Winter  \cite{Oh} and Naud \cite{Naud} have considered the asymptotic distribution of the number of primitive periodic orbits for hyperbolic rational maps and certain quadratic polynomials respectively. The relationship between our orbit counting function $P_0(x)$ and the number of primitive periodic orbits is analogous to the relationship between the hyperbolic circle problem and primitive closed geodesics in the context of Kleinian groups (see Theorem $1$ and Theorem $2$ of \cite{PolSharp2}).

\subsection{Strategy of proofs}
The proofs of Theorem \ref{thm_error} and Theorem \ref{thm_errorcx} use ideas and tools from analytic number theory and Thermodynamic Formalism.

In analytic number theory, for an $L$-function $F(s, \chi)$, the ($k^{th}$ order) Perron's formula relates the summatory functions $M_m(x)$to $F(s, \chi)$ as follows.
\begin{equation}\label{eq_integral}
M^k_m(x) = \frac{1}{2\pi i} \int_{c-i \infty}^{c+i \infty} F(s, \chi) \frac{x^{s+k}}{s(s+1)\cdots (s+k)}ds
\end{equation}
where $c > \delta$ and $M^k_m$ is the $k$-time integral of $M_m$ (i.e. integrate $M_m(y)$ $k$ times). Theorem \ref{thm_error} and Theorem \ref{thm_errorcx} then follow from evaluating this integral using complex analysis. 

We evaluate the integral (\ref{eq_integral}) by shifting the path of integration and applying the residue theorem. To this end, we need to understand the meromorphic domains, in particular pole-free regions, of $F(s, \chi)$ and $G(s, \chi)$ as well as obtain an upper bound for $|F(s, \chi)|$ and $|G(s, \chi)|$ on those regions. This is achieved by using spectral theory of transfer operators from Thermodynamic Formalism.

A key step to make use of Thermodynamic Formalism is to express $F(s, \chi)$ (resp. $G(s, \chi)$) as a geometric series of {\it twisted} transfer operators $\mathcal{L}_{-s\tau, m\theta}$ for properly chosen H\"older potentials $\tau$ and $\theta$ which relate the geometric quantities and the (symbolic) dynamics.

We use methods of Parry-Pollicott \cite{PP}, Pollicott-Sharp \cite{PolSharp}, \cite{PolSharp2}  and Dolgopyat \cite{Dol} to study spectral properties of $\mathcal{L}_{-s\tau, m\theta}$ and obtain pole-free regions and bounds for $F(s, \chi)$ (resp. $G(s, \chi)$). In particular, we show that the $L$-function is analytic on the half-plane $Re(s) > \delta-\varepsilon$ for some $\varepsilon>0$ except for a simiple pole at $s = \delta$ if the character is trivial. Furthermore, the absolute values of $F$ and $G$ are uniformly bounded by $|Im(s)|+|m|$ where $m$ is the integer given by the character $\chi(z) = z^m$.

\subsection{Organization of the paper}
The paper is organized as follows. In Section \ref{sec_thermo}, we review tools from Thermodynamic Formalism that we will need. In Section \ref{sec_Fs}, we construct, for any Schottky group, an $L$-function from which we obtain the series in Basmajian-type identities as a special value. We discuss its analytic properties in Section \ref{sec_anaFs} and use them to study the summatory functions in Section \ref{sec_count}. Section \ref{sec_quad} is devoted to the parallel results for Basmajian-type identities for quadratic polynomials.

\subsection{Acknowledgments}
I am grateful to Mark Pollicott and Steve Lalley for helpful conversations. I thank Peter Shalen for learning number theory together, John Friedlander for providing us excellent references and Giulio Tiozzo for discussing some technical details.

% % % % % % % % % % % % % % % % % % % % % % % % % % % % % % % % % % % % % % % % % % % % % % % % % % % % % % % % % % % % % % %
% % % % % % % % % % % % % % % % % % % % % % % % % % % % % % % % % % % % % % % % % % % % % % % % % % % % % % % % % % % % % % % 
\section{Spectrum of transfer operators} \label{sec_thermo}
Let $A_{n \times n}$ be the adjacency matrix for a directed graph where $n$ is the number of vertices in the graph. We associate a one-sided {\it subshift of finite type} $\Sigma_A^+$ to $A$ as $$\Sigma_A^+ = \{ \underline{i} = (i_0, i_1, \cdots ) ~|~ i_j \in \{1, \cdots, n\},  A_{i_j,i_{j+1}} = 1 \}.$$
The {\it shift map} $\sigma: \Sigma_A^+ \to \Sigma_A^+$ is defined by $\sigma((i_0, i_1, i_2, \cdots)) = (i_1, i_2, \cdots)$.

Given $0 < \kappa < 1$, we define a metric on $\Sigma_A^+$ by $d(\underline{x},\underline{y}) = \kappa^n$, where $n = n(\underline{x},\underline{y})$ is the largest number so that the sequences $\underline{x}$ and $\underline{y}$ agree in the first $n$ terms. With this metric, $\Sigma_A^+$ is homeomorphic to a Cantor set.

A non-negative square matrix $A$ is {\it irreducible} if for each pair of indices $(i,j)$ there exists a positive integer $n$ such that $A^n_{i,j}>0$. $A$ is {\it aperiodic} if there exists a positive integer $n$ such that $A^n_{i,j}>0$ for all $(i,j)$. 

A subshift of finite type $(\Sigma_A^+, \sigma)$ is {\it (topologically) transitive} if there exists a dense orbit. $(\Sigma_A^+, \sigma)$ is {\it (topologically) mixing} if for each pair of non-empty open sets $U, V \subset \Sigma_A^+$, there exists a positive integer $n$ such that $\sigma^nU \cap V \neq \emptyset$. $(\Sigma_A^+, \sigma)$ is transitive if and only if $A$ is irreducible and mixing if and only if $A$ is aperiodic.

Consider $C^{\alpha}(\Sigma_A^+, \bbR)$ the Banach space of real-valued H\"older continuous functions on $\Sigma_A^+$. Let $\tau \in C^{\alpha}(\Sigma_A^+, \bbR)$. We say $\tau$ is {\it eventually positive} if there exist an $n \in \bbN$ such that $\tau^n = \sum_{i=1}^{n-1} \tau \circ \sigma^i$ is positive. Given $\tau \in C^{\alpha}(\Sigma_A^+, \bbR)$ and $\theta \in C^{\alpha}(\Sigma_A^+, \bbR/2\pi\bbZ)$, for $s = \sigma + it \in \bbC$ and $m \in \bbZ$, define the {\it twisted transfer operator} $\mathcal L_{-s\tau, m\theta} : C^{\alpha}(\Sigma_A^+, \bbC) \to C^{\alpha}(\Sigma_A^+, \bbC)$ by $$\mathcal L_{-s\tau, m\theta} w(x) = \sum_{\sigma(y)=x} e^{-s\tau(y)+im\theta(y)}w(y).$$ 
Note that if $m = 0$, $\mathcal L_{-s\tau, 0}$ is the usual transfer operator. The $n$th iterate $\mathcal L_{-s\tau, m\theta}^n$ has the form $$\mathcal L_{-s\tau, m\theta}^n w(x) = \sum_{\sigma^n(y)=x} e^{-s\tau^n(y)+im\theta^n(y)}w(y)$$ where $\tau^n(y) = \sum_{j=0}^{n-1} \tau(\sigma^j y)$ and $\theta^n(y) = \sum_{j=0}^{n-1} \theta(\sigma^j y)$.

In our applications, we wish to construct $L$-functions using twisted transfer operators $\mathcal{L}_{-s\tau ,m\theta}$ so that we can interpret the right hand side of the Basmajian-type identities as a special value. To this end, we construct H\"older potentials $\tau$ and $\theta$ in such a way so that they link the geometry and the symbolic dynamics. The spectral properties of $\mathcal{L}_{-s\tau ,m\theta}$ then give us desired analytic properties of our $L$-functions. 

%Recall that if $T : B \to B$ is a bounded linear operator acting on a Banach space $B$, the {\it spectrum} of $T$, $spec(T)$, is the set of numbers $\lambda \in \bbC$ such that $(T-\lambda I)$ is not invertible. The {\it essential spectrum} of $T$ $espec(T)$ consists of $\lambda \in spec(T)$ such that at least one of the following is true
%\begin{enumerate}
%	\item $Range(\lambda-T)$ is not closed in $B$;
%	\item $\lambda$ is a limit point of $spec(T)$;
%	\item  $\cup_{r=1}^{\infty}\ker(\lambda-T)^r$ is infinite dimensional.
%\end{enumerate}

%We define the {\it spectrum radius} $$\rho(T) =\sup \{|\lambda| ~|~ \lambda \in spec(T) \}$$ and the {\it essential spectrum radius} $$\rho_e(T) = \sup \{|\lambda| ~|~ \lambda \in espec(T) \}.$$ $T: B \to B$ is {\it quasi-compact} if $\rho_e(T) < \rho(T)$.

For {\it mixing} subshift of finite type, we have the following Ruelle-Perron-Frobenius Theorem.
\begin{thm}[Complex Ruelle-Perron-Frobenius Theorem \cite{PP}] \label{thm_RPF}
	Let $(\Sigma_A, \sigma)$ be a mixing subshift of finite type. Then
	\begin{enumerate}
		\item If $s$ is real, then the operator $\mathcal{L}_{-s\tau}$ has a simple isolated maximal positive eigenvalue $\lambda_{-s\tau} = e^{P(-sr)}$ with an associated strictly positive eigenfunction $\psi \in C^{\alpha}(\Sigma_A, \bbR)$. There is a unique probability measure $\mu$ on $\Sigma_A$ such that $\mathcal{L}_{-s\tau}^*\mu = e^{P(-s\tau)}\mu$ and $\int \psi d\mu =1$. Furthermore, the rest of the spectrum is contained in a disk of radius strictly smaller than $e^{P(-s\tau)}$. There is exponential convergence $|\mathcal{L}_{-s\tau}^n \mathds{1} - \lambda_{-s\tau}^n| \le C\lambda_{-s\tau}^n\theta^n$, for some $C > 1$, $0<\theta <1$ and $n \ge 1$.
		\item If $s$ is complex, then $\rho(\mathcal{L}_{-s\tau})$ is less than or equal to $e^{P(-Re(s)\tau)}$. $\rho(\mathcal{L}_{-s\tau})$ is strictly less than $e^{P(-Re(s)\tau)}$ unless $Im(s)\tau = u \circ \sigma - u + \Psi + a$ where $u \in C^0(\Sigma_A,\bbR)$ and $\Psi \in C^0(\Sigma_A, 2\pi\bbZ)$ and $a$ is a constant. Furthermore, if such an identity does hold, then $\mathcal{L}_{-s\tau}$ has a simple maximal eigenvalue $e^{P(-Re(s)\tau)+ia}$ and the rest of the spectrum is contained in a disk of radius strictly less than $e^{P(-Re(s)r)}$.
		\item If $\tau$ is real-valued and is cohomologous to a negative function, then for $t\in \bbR$, the function $t \mapsto P(-t\tau)$ is real analytic and strictly decreasing. Furthermore, $P(-t\tau) \to -\infty (\infty)$ as $t \to \infty (-\infty)$. In particular, $P(-t\tau)$ has a unique zero.
	\end{enumerate}
\end{thm}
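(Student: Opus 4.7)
The plan is to treat the three parts in order, proving (1) via the classical Perron--Frobenius strategy for positive operators on H\"older spaces, (2) by a triangle-inequality argument plus a cocycle analysis, and (3) by combining analytic perturbation theory with the variational principle. Throughout, the key analytic tool is quasi-compactness of $\mathcal L_{-s\tau}$ on $C^\alpha(\Sigma_A^+,\bbC)$, which one obtains from a Lasota--Yorke--Doeblin--Fortet inequality of the form
\begin{equation*}
\|\mathcal L_{-s\tau}^n w\|_\alpha \le C_1 \kappa^{n\alpha}\|w\|_\alpha + C_2(s,n)\|w\|_\infty,
\end{equation*}
valid because $\tau$ is H\"older and $\sigma$ is locally expanding on the symbolic space (distortion/bounded-distortion estimates). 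By Ionescu-Tulcea--Marinescu this forces the essential spectral radius on $C^\alpha$ to be strictly smaller than the spectral radius on $C^0$, so any spectrum outside a disk of radius $\kappa^\alpha \rho$ consists of isolated eigenvalues of finite multiplicity.

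For part (1), I would first work on the real axis. The cone $K=\{w\in C^\alpha:w>0\}$ is invariant and Birkhoff-contracted by some iterate $\mathcal L_{-s\tau}^N$ using mixing of $A$ (the aperiodicity yields a uniform bound on $\sigma^N y$ hitting every cylinder, which is exactly what makes the projective diameter finite). The Birkhoff--Hopf theorem then gives a unique positive eigenfunction $\psi$ with simple eigenvalue $\lambda_{-s\tau}$, and the dual operator acts on measures analogously to produce the equilibrium measure $\mu$ with $\mathcal L_{-s\tau}^*\mu=\lambda_{-s\tau}\mu$. Simplicity together with the quasi-compactness gives the spectral gap, and the exponential convergence $|\mathcal L_{-s\tau}^n\mathds 1 - \lambda_{-s\tau}^n|\le C\lambda_{-s\tau}^n\theta^n$ is the standard consequence of that gap once we normalize by $\psi$ and the eigenmeasure. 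The identification $\lambda_{-s\tau}=e^{P(-s\tau)}$ then comes from the variational principle, computing $P(-s\tau)$ via the transfer operator as in Parry--Pollicott.

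For part (2), the pointwise bound $|\mathcal L_{-s\tau}w(x)|\le \mathcal L_{-\mathrm{Re}(s)\tau}|w|(x)$ applied iteratively yields $\rho(\mathcal L_{-s\tau})\le e^{P(-\mathrm{Re}(s)\tau)}$ via Gelfand's formula. Equality is the delicate case: if $\mathcal L_{-s\tau}$ has an eigenvalue $e^{P(-\mathrm{Re}(s)\tau)+ia}$ with eigenfunction $h$, comparing $|h|$ with the real eigenfunction $\psi$ forces equality in the triangle inequality at every preimage, so that $e^{-it\tau(y)}h(y)/h(\sigma y)$ is independent of $y\in\sigma^{-1}(x)$. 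Writing $h/|h|=e^{iu}$ and pushing this identity through one step of dynamics produces the cohomological equation $\mathrm{Im}(s)\tau=u\circ\sigma-u+\Psi+a$ with $\Psi$ taking values in $2\pi\bbZ$. Conversely, if such an identity holds, one verifies directly that $e^{iu}\psi$ is an eigenfunction with the asserted eigenvalue; simplicity and the gap then transfer from the real case.

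For part (3), real-analyticity of $t\mapsto P(-t\tau)$ follows from Kato's perturbation theory since $\lambda_{-t\tau}=e^{P(-t\tau)}$ is a simple isolated eigenvalue depending analytically on $t\in\bbR$. Differentiating and using the eigenvalue equation together with the equilibrium measure gives $\frac{d}{dt}P(-t\tau)=-\int\tau\,d\mu_t$, which is strictly negative once $\tau$ is cohomologous to a negative function (as the integral is a cohomology invariant). Strict convexity of $-P$ combined with $-\int\tau\,d\mu_t$ being bounded away from $0$ on any compact set yields $P(-t\tau)\to\mp\infty$ as $t\to\pm\infty$, hence a unique zero by the intermediate value theorem. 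I expect the main technical hurdle to be making the Birkhoff cone/quasi-compactness argument in part (1) entirely uniform so that it cleanly feeds into parts (2) and (3); the cohomological rigidity step in part (2) is conceptually slickest but requires careful bookkeeping of the preimage structure of $\sigma$ under the mixing hypothesis.
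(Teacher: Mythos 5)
This statement is quoted from Parry--Pollicott \cite{PP} and the paper gives no proof of it, so there is nothing internal to compare against; your outline is the standard proof of the Ruelle operator theorem and its complex extension, and as a sketch it is essentially correct. Two points deserve care. First, in part (1) the naive cone $K=\{w\in C^\alpha : w>0\}$ is \emph{not} Birkhoff-contracted (its image has infinite projective diameter in general); you need the logarithmically H\"older cone $\{w>0 : w(x)\le e^{C d(x,y)^\alpha}w(y)\}$, whose image under $\mathcal L_{-s\tau}^N$ has finite diameter by aperiodicity plus bounded distortion --- or else follow \cite{PP} and get the positive eigenfunction from a Schauder--Tychonoff fixed-point argument, deducing the gap afterwards from Ionescu-Tulcea--Marinescu. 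Second, in part (3) your claim that $\tfrac{d}{dt}P(-t\tau)=-\int\tau\,d\mu_t$ is strictly negative ``once $\tau$ is cohomologous to a negative function'' has the sign backwards: that derivative is negative exactly when $\int\tau\,d\mu_t>0$, i.e.\ when $\tau$ is cohomologous to a \emph{positive} function (consistent with the eventual positivity of $\tau$ assumed elsewhere in the paper); you have inherited a typo from the statement, but the inference as you wrote it does not follow. The rest --- the Lasota--Yorke inequality for quasi-compactness, the triangle-inequality bound $\rho(\mathcal L_{-s\tau})\le e^{P(-\mathrm{Re}(s)\tau)}$ with the equality case forcing the cohomological equation $\mathrm{Im}(s)\tau=u\circ\sigma-u+\Psi+a$, and Kato perturbation for analyticity of the pressure --- is exactly the argument in the cited reference.
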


\subsection{Non-mixing subshift of finite type}
If $A$ is not aperiodic, then the subshift of finite type is not mixing and the Ruelle-Perron-Frobenius Theorem would not apply. However, Pollicott-Sharp have shown in \cite{PolSharp} that there is always a subshift of finite type $\Sigma_B$ coming from a submatrix $B$ of $A$ which carries the spectrum of the transfer operator. In our applications, it turns out to be straightforward to find such submatrix $B$ from $A$.

Note that the subshift of finite type $\Sigma_B \subset \Sigma_A$ and any $r \in C^{\alpha}(\Sigma_A,\bbC)$ gives rise to $r' \in C^{\alpha}(\Sigma_B,\bbC)$ by restricting $r$ to $\Sigma_B$. We define the transfer operators $\mathcal{L}_{-sr} : C^{\alpha}(\Sigma_A, \bbC) \to C^{\alpha}(\Sigma_A, \bbC)$ and $\mathcal{L}_{-sr'} : C^{\alpha}(\Sigma_B, \bbC) \to C^{\alpha}(\Sigma_B, \bbC)$ in the usual way.

\begin{lem}[\cite{PolSharp} Lemma 2] \label{lem_samespec}
	The operators $\mathcal{L}_{-sr}$ and $\mathcal{L}_{-sr'}$ are both quasi-compact. Moreover, they have the same spectra and satisfy the following properties.
	\begin{enumerate}
		\item  If $s$ is real-valued, then the operators $\mathcal{L}_{-sr}$ and $\mathcal{L}_{-sr'}$ have a simple isolated maximal positive eigenvalue $\lambda_{-sr} = \lambda_{-sr'} = e^{P(-sr)}=e^{P(-sr')}$.
		\item $\rho_e(\mathcal{L}_{-sr}) \le (1/2)^{\alpha}e^{P(-Re(s)r)}$ and the same is true for $\mathcal{L}_{-sr'}$.
	\end{enumerate}
\end{lem}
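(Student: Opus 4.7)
The plan is to prove quasi-compactness and the essential spectral radius bound via a Lasota--Yorke inequality, and then to identify the two spectra (above the essential radius) through the periodic-orbit data carried by an associated Fredholm-type determinant. For the maximal eigenvalue one then invokes Perron--Frobenius on the recurrent part.

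First I would equip $C^{\alpha}(\Sigma_A,\bbC)$ with the usual norm $\|w\|_\alpha = \|w\|_\infty + |w|_\alpha$. A standard computation using distortion bounds for the Birkhoff sums $r^n$ together with the contraction factor $\kappa = 1/2$ of the inverse branches of $\sigma$ gives a Lasota--Yorke inequality of the form
$$|\mathcal{L}_{-sr}^n w|_\alpha \;\le\; C (1/2)^{n\alpha} \, \|\mathcal{L}_{-Re(s)r}^n \mathds{1}\|_\infty \, \|w\|_\alpha \;+\; C_n \|w\|_\infty.$$
Since $\|\mathcal{L}_{-Re(s)r}^n \mathds{1}\|_\infty^{1/n} \to e^{P(-Re(s)r)}$, the Hennion--Ionescu--Tulcea--Marinescu theorem then yields quasi-compactness with $\rho_e(\mathcal{L}_{-sr}) \le (1/2)^{\alpha} e^{P(-Re(s)r)}$; the identical argument applied on $\Sigma_B$ proves item (2).

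Next I would decompose the directed graph of $A$ into strongly connected components (SCCs) and take $B$ to be the submatrix obtained by discarding every edge not contained in some SCC. Every closed loop in $A$, hence every periodic orbit of $\sigma$ on $\Sigma_A^+$, lies entirely inside a single SCC, and therefore inside $\Sigma_B^+$. In particular, for each $n \ge 1$ the periodic-orbit traces agree:
$$Z_n(\mathcal{L}_{-sr}) \defeq \sum_{\sigma^n x = x} e^{-s r^n(x)} \;=\; \sum_{\sigma^n x = x,\, x \in \Sigma_B^+} e^{-s r^n(x)} \;=\; Z_n(\mathcal{L}_{-sr'}).$$
On the disk $|z| < (1/2)^{-\alpha} e^{-P(-Re(s)r)}$ the Fredholm-type determinant
$$d(z) \;\defeq\; \exp\Big(-\sum_{n \ge 1} \frac{z^n}{n}\, Z_n(\mathcal{L}_{-sr})\Big)$$
converges absolutely, and its zeros are precisely the reciprocals of the eigenvalues of $\mathcal{L}_{-sr}$ outside the essential spectrum, counted with multiplicity; the same statement holds for $\mathcal{L}_{-sr'}$. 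Equality of the traces $Z_n$ therefore forces equality of the two determinants, and hence of the two spectra outside the essential spectral radius.

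Finally, for real $s$ each SCC of $B$ is irreducible, so the Perron--Frobenius theorem for irreducible (possibly periodic) subshifts produces a simple maximal positive eigenvalue $e^{P(-sr')}$ for $\mathcal{L}_{-sr'}$, carried by the SCC of maximal pressure; by the previous paragraph this value is also a simple isolated eigenvalue of $\mathcal{L}_{-sr}$, and the variational principle gives $P(-sr)=P(-sr')$ because every equilibrium state for $-sr$ is supported on the recurrent part of $\Sigma_A^+$. The main obstacle is the Fredholm-determinant identity: transfer operators on subshifts of finite type are not nuclear in a naive sense, so one must either approximate $\mathcal{L}_{-sr}$ by finite-rank Markov truncations and pass to the limit in the style of \cite{PP}, or work in an anisotropic space where a genuine determinant formula holds. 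In either framework, ruling out \emph{phantom} eigenvalues arising purely from the transient part of $\Sigma_A^+$ is the delicate step that the periodic-orbit cancellation above is designed to handle.
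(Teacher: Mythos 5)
The paper does not actually prove this lemma; it is imported verbatim from \cite{PolSharp} (Lemma 2), so your proposal is being measured against the literature rather than against an argument in the text. Your first step is sound: a Lasota--Yorke inequality together with Hennion's theorem is the standard route to quasi-compactness and to the bound $\rho_e(\mathcal{L}_{-sr}) \le (1/2)^{\alpha}e^{P(-Re(s)r)}$, and the same computation on $\Sigma_B^+$ gives item (2). (The factor $(1/2)^{\alpha}$ presupposes the metric $d(\underline{x},\underline{y})=(1/2)^n$; with the metric the paper actually uses one gets $\kappa^{\alpha}$ with Lalley's $\kappa$ --- but that discrepancy sits in the statement, not in your argument.) The Perron--Frobenius step for real $s$ and the use of the variational principle to see $P(-sr)=P(-sr')$ are also fine.

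The gap is in the identification of the spectra. Your trace identity $Z_n(\mathcal{L}_{-sr})=Z_n(\mathcal{L}_{-sr'})$ requires $B$ to retain \emph{every} strongly connected component of $A$ carrying a loop, but the $B$ used in this paper is obtained by deleting, among other things, the absorbing vertex $0$ --- which has a self-loop and is its own SCC. With the paper's normalization $\tau(\dot 0)=0$, the fixed point $\dot 0$ contributes $e^{-s\tau^n(\dot 0)}=1$ to every $Z_n(\mathcal{L}_{-sr})$, so in fact $Z_n(\mathcal{L}_{-sr})=Z_n(\mathcal{L}_{-sr'})+1$, the two determinants differ by a factor $(1-z)$, and $\mathcal{L}_{-sr}$ genuinely has the extra eigenvalue $1$: since the only edge out of $0$ is the self-loop, the cylinder $[0]$ is the single point $\dot 0$, its indicator $\mathds{1}_{\{\dot 0\}}$ is locally constant (hence H\"older), and $\mathcal{L}_{-sr}\mathds{1}_{\{\dot 0\}}=e^{-s\tau(\dot 0)}\mathds{1}_{\{\dot 0\}}=\mathds{1}_{\{\dot 0\}}$. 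So either you are proving the lemma for a different $B$ than the one this paper constructs, or the conclusion ``same spectra'' fails as stated and must be read modulo the eigenvalue contributed by the absorbing state (note also that the displayed matrix $A$ in Section 6 has zero last row, i.e.\ no self-loop at $0$, which contradicts the construction in the text and would make $\dot 0\notin\Sigma_A^+$; the ambiguity has to be resolved one way or the other before the trace identity can even be asserted). Separately, the dictionary between zeros of $d(z)$ and eigenvalues on the whole disk $|z|<\rho_e^{-1}$ is Haydn's theorem, which you correctly flag as the delicate step but do not supply. A more self-contained route that avoids it entirely is to work with eigenfunctions directly: if $\mathcal{L}_{-sr}h=\lambda h$ with $|\lambda|>\rho_e$, then $h|_{\Sigma_B^+}$ is an eigenfunction of $\mathcal{L}_{-sr'}$ unless it vanishes identically, in which case one rules out $\lambda$ (away from the value forced by the absorbing state) using the transient structure; conversely, an eigenfunction on $\Sigma_B^+$ extends to $\Sigma_A^+$ by an explicit convergent series over the transient vertices. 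I would replace the determinant argument by that restriction/extension argument and treat the vertex-$0$ eigenvalue explicitly.
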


\subsection{Strong non-integrability and Dolgopyat-type estimate}
In this subsection, we give a Dolgopyat-type estimate on the spectrum of twisted transfer operators. We begin by defining a property called {\it strong non-integrability (SNI)} for the H\"older potential $\tau$. Recall that $\Sigma_A^-$ is the negative one-sided subshift of finite type, namely, $(\cdots, i_{-2}, i_{-1}, i_0) \in \Sigma_A^-$ if and only if $(i_0,i_1, i_2, \cdots) \in \Sigma_A^+$.

Given $\underline{i}, \underline{j} \in \Sigma^+_A$, $\zeta, \eta \in \Sigma_A^-$ with $A_{\zeta_{0},i_0} = A_{\zeta_{0},j_0} = A_{\eta_{0},i_0} = A_{\eta_{0},j_0} = 1$, the {\it temporal distance function} $\phi_{\zeta, \eta}$ is given by 
\begin{align*}
\phi_{\zeta, \eta}(\underline{i}, \underline{j}) = \sum_{k=0}^{\infty} & \tau(\zeta_{-k} \cdots \zeta_{0}\underline{i}) - \tau(\zeta_{-k} \cdots \zeta_{0}\underline{j})\\
& - \tau(\eta_{-k} \cdots \eta_{0}\underline{i}) + \tau(\eta_{-k} \cdots \eta_{0}\underline{j}).
\end{align*}	

Note that for each $k \ge 0$, $|\tau(\zeta_{-k} \cdots \zeta_{0}\underline{i}) - \tau(\zeta_{-k} \cdots \zeta_{0}\underline{j})| \le ||\tau|| \kappa^k d(\underline{i}, \underline{j})^{\alpha}$. Therefore, the series $\phi_{\zeta, \eta}(\underline{i}, \underline{j})$ converges exponentially fast and $|\phi_{\zeta, \eta}(\underline{i}, \underline{j})| \le \frac{2}{1-\kappa} ||\tau|| d(\underline{i}, \underline{j})^{\alpha}$.

\begin{defn}
	An eventually positive function $\tau \in C^{\alpha}(\Sigma^+_A)$ is {\it strongly non-integrable (SNI)} if there exist $N>0, \delta>0, \zeta, \eta \in \Sigma_A^-$ and $\underline{i}, \underline{j} \in \Sigma^+_A$ with $A_{\zeta_{0},i_0} = A_{\zeta_{0},j_0} = A_{\eta_{0},i_0} = A_{\eta_{0},j_0} = 1$ and $d(\underline{i}, \underline{j}) \le \kappa^N$ such that $$|\phi_{\zeta, \eta}(\underline{i}, \underline{j})| \ge \delta \kappa^{N\alpha}.$$
\end{defn}

\begin{rmk}
This is the same definition as the one given by Dolgopyat in \cite{Dol}. Indeed, setting $w^1 = \zeta \underline{i}, w^2 = \zeta \underline{j}, w^3 = \eta \underline{i}, w^4 = \eta \underline{j}$
\begin{align*}
\phi(w^1, w^2, w^3, w^4) & = \sum_{n=-\infty}^{\infty} \tau(\sigma^nw^1) - \tau(\sigma^nw^2) - \tau(\sigma^nw^3) + \tau(\sigma^nw^4)\\
& = \sum_{n=0}^{\infty} \tau(\sigma^{-n}w^1) - \tau(\sigma^{-n}w^2) - \tau(\sigma^{-n}w^3) + \tau(\sigma^{-n}w^4)\\
& = \phi_{\zeta, \eta}(\underline{i}, \underline{j}) 
\end{align*}
\end{rmk}

Let $\sigma_0$ be the root of $P(-\sigma\tau)=0$ where $P$ is the topological pressure. 

\begin{thm} \label{thm_Dolg}
Let $\tau \in C^{\alpha}(\Sigma_A, \bbR)$ be eventually positive and strongly non-integrable and let $\theta \in C^{\alpha}(\Sigma_A, \bbR/2\pi\bbZ)$. There exist $R>0, \epsilon>0, C>0, 1<\beta <2$ and $0<\rho<1$ such that if $|\sigma-\sigma_0|<\epsilon$ and $|t| > R$, then we have
$$||\mathcal{L}_{-s\tau, m\theta}^n || \le C(|t|+|m|)^{\beta}\rho^n.$$
\end{thm}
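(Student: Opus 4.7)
The plan is to follow the Dolgopyat--Pollicott--Sharp framework, adapted to handle the additional character twist by $m\theta$. First I would reduce to a normalized operator: let $\lambda_\sigma = e^{P(-\sigma\tau)}$ be the leading eigenvalue of $\mathcal{L}_{-\sigma\tau}$ with positive eigenfunction $h_\sigma$ supplied by Theorem \ref{thm_RPF}, and conjugate $\mathcal{L}_{-s\tau, m\theta}$ by multiplication by $h_\sigma$ and rescale by $\lambda_\sigma$ to produce a normalized operator $\widetilde{\mathcal{L}}_{s,m}$ whose leading spectral radius at $t = m = 0$ equals $1$. Since $\sigma \mapsto P(-\sigma\tau)$ is analytic and strictly decreasing by Theorem \ref{thm_RPF}(3), perturbation theory provides analytic dependence of $h_\sigma$ on $\sigma$ near $\sigma_0$, so it suffices to prove the estimate at $\sigma=\sigma_0$ and absorb a small loss into $\rho$.

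The core of the argument is a Dolgopyat-type $L^2$ contraction for $\widetilde{\mathcal{L}}_{s,m}$. I would construct a family of cones $\mathcal{K}_{t,m}$ of positive H\"older functions cut out by Lipschitz bounds of order $|t|+|m|$, such that for $N$ sufficiently large: (i) whenever $|w| \le u \in \mathcal{K}_{t,m}$, there exists $Mu \in \mathcal{K}_{t,m}$ with $|\widetilde{\mathcal{L}}_{s,m}^N w| \le Mu$ pointwise, and (ii) on a set of definite measure one has $Mu \le (1-\eta)u$ for some $\eta > 0$. Step (ii) is where strong non-integrability is used: by iterating the inverse branches of $\sigma$ one selects four preimages of $\sigma^N$ matching the quadruple $\zeta \underline{i}, \zeta \underline{j}, \eta \underline{i}, \eta \underline{j}$ of the SNI definition, so that the oscillatory portion of $\widetilde{\mathcal{L}}_{s,m}^{2N}$ contains a sum of four exponentials with combined phase equal to $t \phi_{\zeta,\eta}(\underline{i},\underline{j})$ plus $O(1)$. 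The SNI lower bound $|\phi_{\zeta,\eta}(\underline{i},\underline{j})| \ge \delta \kappa^{N\alpha}$ combined with the calibrated choice $N \sim \log |t|$ forces this phase to stay bounded away from $2\pi \bbZ$, yielding a strict cancellation in the triangle inequality; this dictates the threshold $|t| > R$.

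Once the cone contraction is established, I would combine it with a Lasota--Yorke inequality of the form $\|\widetilde{\mathcal{L}}_{s,m}^n w\|_\alpha \le A \rho_0^n \|w\|_\alpha + B(|t|+|m|)\|w\|_\infty$, where the factor $|t|+|m|$ comes from the H\"older seminorm of the symbol $e^{-s\tau + im\theta}$. Iterating the $L^2$-contraction obtained from the cone estimate and interpolating against the Lasota--Yorke bound transfers the decay to the H\"older norm at the cost of a polynomial prefactor $(|t|+|m|)^\beta$ for some $1<\beta<2$, producing the claimed estimate. Uniformity over $|\sigma - \sigma_0| < \epsilon$ follows from the analytic dependence above, possibly shrinking $\epsilon$ and $\rho$ slightly.

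The main obstacle is handling the two oscillatory parameters $t$ and $m$ simultaneously, since the SNI hypothesis is stated for $\tau$ only. The trick is that $\theta$ takes values in $\bbR/2\pi\bbZ$, so the phase $m\theta^n$ is an honest additive perturbation of the phase $-t\tau^n$ that can be absorbed into the argument $\varphi$ in the decomposition $w = u e^{i\varphi}$ used throughout the cone estimate; consequently the cone contraction step depends on $m$ only through ambient constants, while the $m$-dependence enters the final bound linearly via the H\"older seminorm in the Lasota--Yorke step. Careful book-keeping of these two regimes then produces the polynomial factor $(|t|+|m|)^\beta$ uniformly, which is the quantitative shape needed for the contour shift in the Perron formula application.
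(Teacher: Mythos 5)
Your outline is the standard Dolgopyat machinery (normalization by the leading eigendata, a Lasota--Yorke inequality, an $L^2$/cone contraction driven by the temporal distance function, with $N\sim\log|t|$ calibrating the SNI lower bound), and in that sense it matches the paper, which does not write out a proof at all: it simply declares the statement to be ``essentially Lemma 4.2 of Dolgopyat'' and asserts that the extra twist by $m\theta$ causes no harm ``since $\tau$ and $\theta$ do not cancel each other out.'' So you have reconstructed the argument the paper is pointing to, in more detail than the paper provides.

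There is, however, a genuine gap at exactly the point where you (and the paper) wave at the twist. In the cancellation step, after taking the fourth-order difference over the quadruple $\zeta\underline{i},\zeta\underline{j},\eta\underline{i},\eta\underline{j}$, the coboundary and argument contributions drop out and the surviving phase is the \emph{combined} temporal distance $-t\,\phi^{\tau}_{\zeta,\eta}(\underline{i},\underline{j})+m\,\phi^{\theta}_{\zeta,\eta}(\underline{i},\underline{j})$, not $t\,\phi^{\tau}_{\zeta,\eta}(\underline{i},\underline{j})$ alone. The SNI hypothesis gives a lower bound only on $\phi^{\tau}$; it says nothing about $\phi^{\theta}$, which in general does not vanish on the chosen quadruple. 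With $N\sim\log|t|$ one arranges $|t\,\phi^{\tau}|\asymp 1$, but then $m\,\phi^{\theta}\asymp m\kappa^{N\alpha}$ is of comparable or larger size once $|m|\gtrsim|t|$, and for suitable $(t,m)$ the combined phase can land arbitrarily close to $2\pi\bbZ$, destroying the strict triangle-inequality gain. So the claim that the cone contraction ``depends on $m$ only through ambient constants'' is not justified: one needs a \emph{joint} non-integrability condition on the pair $(\tau,\theta)$ --- ruling out that $t\tau-m\theta$ is cohomologous to a $2\pi\bbZ$-valued function plus a constant for any $(t,m)\neq(0,0)$ in the relevant range --- and the cancellation must be run against the combined temporal distance. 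This is precisely what Dolgopyat's Lemma 4.2 assumes in the compact group extension setting, and it is the content hidden in the paper's phrase ``$\tau$ and $\theta$ do not cancel each other out.'' Your write-up, as it stands, would only prove the estimate for $|m|$ bounded in terms of $|t|$; to get the stated uniformity in $m$ you must either verify the joint non-integrability for the specific $(\tau,\theta)$ constructed later in the paper, or restate the hypothesis of the theorem accordingly.
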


\begin{rmk}
This is essentially Lemma 4.2 of Dolgopyat \cite{Dol}. Here we twist the transfer operator by adding $m\theta$ to the imaginary part of the Holder potential $-s\tau$. Since $\tau$ and $\theta$ do not cancel each other out, the proof of the theorem follows from that of Lemma 4.2 given in \cite{Dol}. 
\end{rmk}

% % % % % % % % % % % % % % % % % % % % % % % % % % % % % % % % % % % % % % % % % % % % % % % % % % % % % % % % % % % % % % %
% % % % % % % % % % % % % % % % % % % % % % % % % % % % % % % % % % % % % % % % % % % % % % % % % % % % % % % % % % % % % % %
\section{$L$-functions for Schottky groups} \label{sec_Fs}
If $S$ is a compact hyperbolic surface with geodesic boundary, an {\it orthogeodesic} on $S$ is a properly immersed geodesic arc that is perpendicular to $\partial S$ at both ends. In \cite{Bas}, Basmajian 
proved the following identity:
\begin{equation}\label{bas_torus}
\text{length}(\partial S) = \sum_{\gamma} 2 \log \coth \left(\dfrac{\text{length}(\gamma)}{2}\right)
\end{equation}
where the sum is taken over all orthogeodesics $\gamma$ in $S$. A hyperbolic structure on $S$ is a discrete faithful representation $\rho_0: F_n \to \PSL(2,\bbR)$ where $F_n = \pi_1(S)$ is the rank $n$ free group. If we deform $\rho_0$  into a Schottky representation $\rho: F_n \to \PSL(2,\bbC)$, the right hand side series is still formally well-defined.

The main objective of this section is to construct, for any Schottky group, an $L$-function $F(s,m)$ so that we are able to interpret the right hand side of the Basmajian-type identity as a special value derived from $F(s,m)$. More specifically, we would like to use Thermodynamic Formalism to express $F(s,m)$ as a geometric series of {\it twisted} transfer operators $\mathcal{L}_{-s\tau, m\theta}$ for properly chosen H\"older potentials $\tau$ and $\theta$ in such a way that we can recover the formal absolute series by setting $s=m=1$. 

This is feasible because the set of orthogeodesics at a Fuchsian representation forms a {\it regular language} \cite{He}, which will lead us to the symbolic coding of the dynamical systems of orthogeodesics as a subshift of finite type, the first step towards applying Thermodynamic Formalism. We start by recalling these basic properties of orthogeodesics.

\subsection{Basics about orthogeodesics}
Let $S$ be a compact hyperbolic surface with non-empty geodesic boundary. An {\it orthogeodesic} on $S$ is a properly immersed geodesic arc that is perpendicular to the boundary $\partial S$ at both endpoints. Let $\alpha_1,\cdots, \alpha_k \in \pi_1S$ represent the free homotopy classes of boundary geodesics $a_1,\cdots, a_k$, respectively. Denote $H_j \defeq \langle \alpha_j \rangle$, the subgroup of $\pi_1S$ generated by $\alpha_j$. Then there is a bijection between the set of orthogeodesics on $S$ and the set of double cosets of the form $H_pwH_q$ where $w \in \pi_1S$ is not in $H_p \cap H_q$, for $p,q =1,\cdots,k$ (\cite{He}, Proposition 3.2). 

Let $\mathcal{A}$ be a symmetric generating set of $\pi_1S$. Choose an ordering on $\mathcal{A}$. This determines a unique reduced lexicographically first (RedLex) representative $w$ of each orthogeodesic $\gamma$. Let $\mathscr{L}_{p,q}$ be the set of nontrivial RedLex double coset representatives for fixed $p, q$. Then the set $$\mathscr{L} \defeq \coprod\limits_{1 \le p, q \le k} \mathscr{L}_{p,q}$$ is naturally in bijection with the set of orthogeodesics on $S$.

\begin{prop}[\cite{He}, Proposition 3.4]
	$\mathscr{L}$ is a {\it regular language} over the {\it alphabet} $\mathcal{A}$.
\end{prop}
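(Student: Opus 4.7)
The plan is to show that each $\mathscr{L}_{p,q}$ is a regular language and then conclude $\mathscr{L}$ is regular as a finite union. Since $S$ has non-empty boundary, $\pi_1 S = F_n$ is free on $n$ generators with symmetric generating set $\mathcal{A}$, and each $H_p = \langle \alpha_p \rangle$ is infinite cyclic, generated by a primitive cyclically reduced word.

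First I would note that the set of reduced words over $\mathcal{A}$ forms a regular language, recognized by a DFA whose states record the last letter read and which forbids any transition to that letter's formal inverse. The remaining task is to carve out, inside this regular language, the RedLex condition by an additional finite-state constraint, and then take the product of the two machines.

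I would decompose the RedLex condition into two local pieces: that $w$ attains the minimum length in its double coset $H_p w H_q$, and that among the (finitely many) equal-length representatives, $w$ is lex-first. Minimum-length-ness is equivalent to neither $\alpha_p^{\pm 1} w$ nor $w \alpha_q^{\pm 1}$ being strictly shorter after free reduction, a condition that inspects only boundary segments of $w$ of length at most $|\alpha_p|$ and $|\alpha_q|$, hence is finite-state. The same-length alternatives can only arise as $\alpha_p^a w \alpha_q^b$ with $a, b \in \{-1, 0, 1\}$ and very specific cancellation patterns (cancelling exactly $|\alpha_p|/2$ or $|\alpha_q|/2$ letters on the corresponding side), so there are only finitely many of them and each differs from $w$ in a bounded window at the two ends, making the lex comparison a local check.

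The main obstacle will be the case analysis enumerating, for each possible overlap of $w$ with $\alpha_p^{\pm 1}$ (resp.\ $\alpha_q^{\pm 1}$), whether the corresponding free reduction produces a strict shortening, an equal-length alternative, or a strict lengthening; an extra subtlety is that the factorization $w = \alpha_p^{c_1} u \alpha_q^{c_2}$ has a priori unbounded $c_1, c_2$, so one must verify that only the first and last $\max(|\alpha_p|, |\alpha_q|)$ letters of $w$ are really relevant to the comparison. Once this bounded enumeration is complete, packaging the resulting conditions into a finite-state machine and intersecting with the reduced-word DFA produces $\mathscr{L}_{p,q}$ as a regular language, finishing the proof.
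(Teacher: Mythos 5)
This proposition is imported from \cite{He} (Proposition 3.4) and is not reproved in the present paper, so you should compare your plan against the standard argument rather than against anything printed here. Your outline is the right one: the reduced words over $\mathcal{A}$ form a regular language, and membership in each $\mathscr{L}_{p,q}$ is cut out by finitely many conditions that inspect only a bounded prefix and a bounded suffix of $w$, which is exactly what a finite state automaton can check; the union over $p,q$ is then regular. The one step that carries the real content, and which you only gesture at, is the reduction from arbitrary powers to $a,b\in\{-1,0,1\}$. For $\alpha_p$ cyclically reduced of length $\ell_p$ one has $|\alpha_p^a w|=|a|\ell_p+|w|-2\min(|a|\ell_p,m)$ for $a>0$, where $m$ is the length of the longest prefix of $w$ agreeing with the periodic word $\alpha_p^{-1}\alpha_p^{-1}\cdots$ (and symmetrically for $a<0$ and for right multiplication). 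This is piecewise linear in $|a|$, first decreasing then increasing, so non-minimality is already witnessed at $a=\pm1$ (equivalently $m>\ell_p/2$, a condition on the first $\lfloor\ell_p/2\rfloor+1$ letters of $w$), and an equal-length alternative forces $m=|a|\ell_p/2\le\ell_p/2$, hence $|a|\le1$ and $\ell_p$ even. With that lemma in hand, your locality claims hold for all $w$ with $|w|>\ell_p+\ell_q$, since then the left and right modifications occur in disjoint windows and cannot interact; the finitely many shorter words do not affect regularity. Two housekeeping points to add: arrange (or justify) that each $\alpha_p$ is cyclically reduced in the chosen generating set, since otherwise $\alpha_p^a$ is not reduced as written and the displacement formula fails; and excise the degenerate cosets with $w\in H_p\cup H_q$, which is harmless because $\{\alpha_p^n : n\in\bbZ\}$ is itself a regular language. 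Note also that the lex comparison between $w$ and an equal-length alternative $\alpha_p^a w\alpha_q^b$ is a comparison of fixed strings in the affected window (the replaced half of $\alpha_p$ cannot equal the half of $w$ it replaces), so it genuinely is a finite-state check as you claim.
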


Recall that for a fixed finite alphabet $\mathcal{A}$, a {\it language} is a subset of the set of all words on $\mathcal{A}$. A language is {\it regular} if it consists of exactly the words accepted by some finte state automaton. A {\it finite state automaton} (FSA) on a fixed alphabet $\mathcal{A}$ is a finite directed graph $G$ with a starting vertex $*$ and a subset of the vertices called the {\it accept states}, whose oriented edges are labeled by letters of $\mathcal{A}$ so that there is at most one outgoing edge with any given label at each vertex. A word is {\it accepted} by a finite state automaton if there is a path realizing the word which starts with $*$ and ends on an accept state. Figure \ref{fig:AugFSAquad} is an example of an FSA which accepts words in the alphabet $\{a,b \}$ ending with at least two $a$'s.
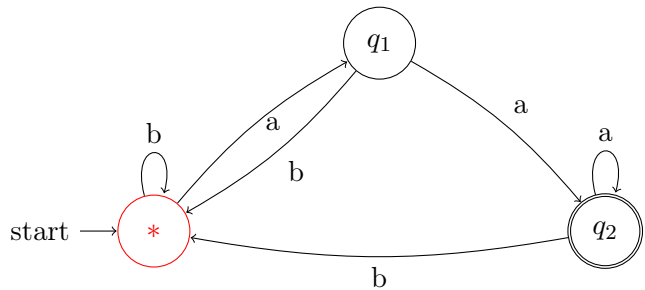
\begin{figure}[h!]
	\centering
	\begin{tikzpicture} [shorten >=0.3pt,node distance=0.4cm,auto] 
	\node[red][state, initial] (q0) at (0,0)  {$*$};
	\node[state] (q1) at (3,2.5) {$q_1$}; 
	\node[state,accepting] (q2) at (6,0) {$q_2$};
	\path[->] 
	(q0) edge [bend left=10, right] node [swap] {a} (q1)
	edge [loop above] node [swap] {b} (q0)
	(q1) edge [bend left=10] node {a} (q2)
	edge [bend left=10] node {b} (q0)
	(q2) edge [loop above] node [swap] {a} (q2) 
	edge [bend left=10] node {b} (q0);
	\end{tikzpicture}
	\caption{An FSA accepting all words over $a,b$ that end with at least two $a$'s. The vertex $q_2$ is the accept state.} 
	\label{fig:AugFSAquad}
\end{figure}

\subsection{Symbolic dynamics for orthogeodesics}
Given the directed graph (or FSA) parametrizing the set of orthogeodesics, we augment it by adding a vertex named $0$ with one incoming edge for each accept state and an edge from itself. Let $A$ be the adjacency matrix of this  {\it augmented} FSA. Then there is a one-sided subshift of finite type $\Sigma_A^+$ associated to the orthogeodesics. An orthogeodesic $\gamma$ is coded in the form of $i_0 i_1 \cdots i_{n} \dot 0$, which is a sequence ending with infinitely many $0$s. If $w$ is the unique RedLex representative of $\gamma$, then $w = l(i_0,i_1)\cdots l(i_{n-1},i_n)$ where $l(i_j,i_{j+1})$ is the label on the edge $(i_j,i_{j+1})$ in the augmented FSA.

Before defining a metric on $\Sigma_A^+$, we recall the following lemma due to S. Lalley.
\begin{lem}[\cite{La}] \label{lem_lalley}
	There exist constants $C < \infty$, $0 < \kappa < 1$ such that for each reduced word $w$ of length $n$, $|w(\Lambda_{\Gamma})| \le C\kappa^n$.
\end{lem}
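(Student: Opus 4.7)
The geometric content of the lemma is that a reduced word of length $n$ in the Schottky group, viewed as a Möbius transformation, crushes the limit set into a region whose diameter decays like $\kappa^n$. My plan is to exploit the Markov/Schottky disk structure of $\Gamma$ together with the Koebe distortion theorem.

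First I would fix the standard Schottky picture: pairs of disjoint closed round disks $D_1, D_1', \dots, D_k, D_k'$ on $\bar{\bbC}$, with each generator $g_i$ a Möbius map sending $\bar{\bbC}\setminus D_i$ univalently onto $D_i'$ (and $g_i^{-1}$ reversing the roles). The limit set $\Lambda_\Gamma$ sits in $\bigcup_j (D_j \cup D_j')$, and a reduced word $w = g_{i_1}^{\varepsilon_1}\cdots g_{i_n}^{\varepsilon_n}$ acts as a composition of such disk-to-disk Möbius maps. Reducedness guarantees that the target disk of each factor is disjoint from the source disk of the next factor, so the composition is a univalent map from a fixed neighborhood of $\Lambda_\Gamma$ into a nested chain of Schottky disks; in particular $w(\Lambda_\Gamma)$ is contained in a single disk at depth $n$.

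Next I would get the geometric contraction factor. Each single-generator factor is a Möbius map sending a round open set $U$ (the exterior of one Schottky disk) univalently into a round disk $V$ (the interior of the partner disk), and $U,V$ are separated in the spherical metric by a definite gap that only depends on the disk configuration. By the Schwarz lemma in the hyperbolic metric of the target disk, or equivalently the Koebe $1/4$ theorem, such a map is a uniform contraction of Euclidean diameters by a factor $\kappa_0 < 1$ when one stays a definite spherical distance away from the boundary of the source disk. The reduced condition ensures this hypothesis is met at every step of the composition. Iterating the contraction through the $n$ factors, and using the Koebe distortion theorem on the composed map $w$ restricted to a fixed neighborhood of $\Lambda_\Gamma$, one controls the diameter of the image by the sup norm of the derivative, giving
\[
|w(\Lambda_\Gamma)| \;\le\; C \, \kappa^n, \qquad \kappa = \kappa_0 \in (0,1),
\]
with the constant $C$ absorbing the Koebe distortion constant and the diameter of $\Lambda_\Gamma$.

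The main obstacle I anticipate is showing that the per-letter contraction factor $\kappa_0$ is genuinely uniform over all generators and all admissible configurations. Near the boundary of a Schottky disk a Möbius map can be arbitrarily close to an isometry, so one must crucially use that reducedness places each successive factor a bounded hyperbolic distance inside the appropriate disk. Equivalently, one can phrase this as uniform expansion of the Bowen--Series map on a neighborhood of $\Lambda_\Gamma$, which holds because $\Gamma$ is convex cocompact (no parabolics). Once this uniform expansion is in hand, bounded distortion converts pointwise derivative bounds into the diameter bound asserted in the lemma.
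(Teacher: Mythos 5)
The paper does not actually prove this lemma: it is imported wholesale from Lalley \cite{La}, where it follows from distortion estimates for the conformal iterated function system generated by the Schottky generators. Your nested-disk argument is the standard self-contained proof of this fact and is essentially sound, so you have supplied an argument where the paper supplies only a citation. Two points deserve tightening. First, as literally written, ``$w(\Lambda_{\Gamma})$ is contained in a single disk at depth $n$'' cannot refer to the image of the whole limit set under the M\"obius transformation $w$, since $\Lambda_{\Gamma}$ is $\Gamma$-invariant and hence $w(\Lambda_{\Gamma})=\Lambda_{\Gamma}$; the quantity $|w(\Lambda_{\Gamma})|$ has to be read as the diameter of the cylinder set, i.e.\ of $w$ applied only to the part of $\Lambda_{\Gamma}$ lying outside the source disk of the last letter. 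Your chain-of-disks argument implicitly makes exactly this restriction (each factor is only ever fed points outside its source disk), but it should be stated. Second, a single factor $g_i$ is a hyperbolic \emph{isometry} from the exterior of $D_i$ onto the interior of $D_i'$, not a contraction; the genuine per-letter factor $\kappa_0<1$ comes from the strict compact inclusion of the target disk of one letter into the admissible domain of the next letter, via Schwarz--Pick applied to that inclusion. You identify precisely this subtlety in your closing paragraph, so the argument does close up, but without that observation the earlier sentence asserting that each single factor ``is a uniform contraction of Euclidean diameters by a factor $\kappa_0<1$'' is not correct as stated; the contraction is a property of consecutive pairs of letters in a reduced word, which is exactly where reducedness enters.
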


Let $\kappa$ be as in the lemma above, we define a metric on $\Sigma_A^+$ by $$d(\underline{x},\underline{y}) = \kappa^n$$ where $n = n(\underline{x},\underline{y})$ is the largest number so that the sequences $\underline{x}$ and $\underline{y}$ agree in the first $n$ terms.

\subsection{Non-mixing subshift of finite type}
In general $A$ is not aperiodic as there are conditions on the two ends of every word in the regular language of orthogeodesics (coming from taking double quotient), as well as the added vertex $0$ in the augmented graph. If the surface $S$ has only one boundary component, then $\mathscr{L} = \mathscr{L}_{1,1}$. In this case, it is not hard to see that there is submatrix $B$ of $A$ that is aperiodic; let $B$ be the adjacency matrix of the subgraph obtained from the augmented FSA by deleting the vertex $0$, the vertices corresponding to the starting and ending conditions and the edges going in and out from those vertices. Thus, the subshift of finite type $\Sigma_B^+ \subset \Sigma_A^+$ is in one-to-one correspondence to the set of reduced words in the alphabet of a symmetric generating set. 

If $S$ has more than $k \ge 2$ boundary components, then in fact the language $\mathscr{L}_{p,q}$ is regular for each $p,q = 1, \cdots, k$. It follows that for each $p$ and $q$, we get the same subshift of finite type $\Sigma_B^+ \subset \Sigma_A^+$ which is in one-to-one correspondence to the set of reduced words in the alphabet of a symmetric generating set. Therefore, we will consider H\"older potentials $\tau' : \Sigma_B^+ \to \bbR$ and $\theta': \Sigma_B^+ \to \bbR/2\pi\bbZ$ in the following subsection which are restrictions to $\Sigma_B^+$ of H\"older potentials defined on $\Sigma_A^+$.

\subsection{Constructing $L$-functions}
We define H\"older potentials $\tau$ and $\theta$ whose associated transfer operator would give us the desired $L$-function.

Given a Schottky representation $\rho : F_k \to PSL_2(\bbC)$, denote by $\ell$ the axis (in the universal cover) of the boundary element. If $w$ is a word in the regular language, denote by $d_w $ the {\it complex} distance between $\ell$ and $w \cdot \ell$ in $\bbH^3$ and we have $$\coth^2 \left(\frac{d_w}{2}\right) = [\ell(-\infty),\ell(\infty); w\cdot \ell(-\infty), w\cdot \ell(\infty)] = c_w.$$

Then formally we have $\text{RHS} = \sum_{w } \log c_w$ where each term in the series is a complex number. 
\begin{prop}\label{prop_holder1}
	There exists a H\"older map $\tau: \Sigma_A^+ \to \mathbb R$ such that for any integer $n \ge 1$, we have $$ \tau^n(i_0 i_1 \cdots i_{n} \dot 0) =  -\log \left|\log c_w\right| $$
	where $\tau^n(x) = \sum_{j=0}^{n-1} \tau(\sigma^j x)$, $w = l(i_0,i_1)\cdots l(i_{n-1},i_n)$.
\end{prop}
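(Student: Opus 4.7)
The plan is to realize $-\log|\log c_w|$ as a Birkhoff sum of a log-derivative potential: I would factor $|\log c_w|$ over the letters of $w$ using the cross-ratio formula and the chain rule, then absorb the remaining bounded correction as a Hölder coboundary.

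I would begin with the explicit algebraic identity. With $p := \ell(-\infty)$ and $q := \ell(\infty)$, one has the Möbius identity $|wp - wq| = |p-q| \cdot |w'(p)\,w'(q)|^{1/2}$, and a direct manipulation of the cross-ratio gives
\[
1 - c_w \;=\; \frac{(p-q)(wq - wp)}{(wp-q)(wq-p)}.
\]
Because every $w$ is loxodromic and $w \cdot \Lambda_\Gamma$ has diameter $O(\kappa^{|w|})$ by Lemma \ref{lem_lalley}, the points $wp, wq$ are both exponentially close to the attracting fixed point of $w$, so $|c_w - 1|$ decays geometrically in $|w|$ and $\log c_w = -(1-c_w) + O(|1-c_w|^2)$. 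Combining,
\[
|\log c_w| \;=\; |p-q| \cdot |w'(p)\,w'(q)|^{1/2} \cdot H(w),
\]
where the correction $H(w) = |(wp-q)(wq-p)|^{-1}(1 + O(|1 - c_w|))$ depends on the coding of $w$ only through its tail (via the attracting fixed point), up to an exponentially small error.

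Next, I would apply the chain rule to factor $|w'(p)|$ and $|w'(q)|$: writing $w = g_{l(i_0,i_1)} \circ \cdots \circ g_{l(i_{n-1},i_n)}$, the $k$-th factor of $|w'(p)|$ is $|g_{l(i_{k-1}, i_k)}'(g_{l(i_k, i_{k+1})} \circ \cdots \circ g_{l(i_{n-1}, i_n)}(p))|$, and analogously for $q$. Letting $\pi: \Sigma_A^+ \to \Lambda_\Gamma$ denote the natural coding map, the inner evaluation differs from $\pi(\sigma^k \underline x)$ by $O(\kappa^{n-k})$, again by Lemma \ref{lem_lalley}. I would accordingly define
\[
\tau_0(\underline x) \;:=\; -\log \bigl|g_{l(x_0, x_1)}'(\pi(\sigma \underline x))\bigr|,
\]
so that $\tau_0^n(i_0 \cdots i_n \dot 0)$ matches $-\log|w'|$ evaluated at $\pi(\sigma^n \underline x)$ exactly. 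The residual bounded-distortion discrepancy between $\pi(\sigma^k \underline x)$ and the correct evaluation points for $p, q$, together with the factor $|p-q| \cdot H(w)$, would then be absorbed into $\tau$ as a Hölder coboundary $u - u \circ \sigma$ and through the values at symbols involving the added vertex $0$, yielding a potential $\tau$ with $\tau^n(i_0 \cdots i_n \dot 0) = -\log|\log c_w|$ exactly.

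Hölder regularity of $\tau$ reduces to that of $\tau_0$ and $u$. The map $\pi$ is $\alpha$-Hölder for $\alpha = \alpha(\kappa)$ by Lemma \ref{lem_lalley}, and $\log|g_j'|$ is smooth on a neighborhood of $\Lambda_\Gamma$ because the Schottky generators have no poles there, so $\tau_0$ is Hölder. The main obstacle will be the bookkeeping of the correction factor $H(w)$ and the distortion error: as functions of the coding sequence, they depend on tail data (the attracting fixed point of $w$) which must be shown to vary Hölder continuously, with exponent compatible with the metric on $\Sigma_A^+$. The exponential contraction provided by Lemma \ref{lem_lalley} will again be the key input, ensuring the correction can be encoded as a Hölder coboundary while preserving the exact identity at each target sequence.
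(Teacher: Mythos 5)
Your route is genuinely different from the paper's. The paper defines $\tau$ on the dense set $\Sigma_A^*$ directly as a telescoping difference, $\tau(i_0\cdots i_n\dot 0) = \log|\log c_{\sigma(w)}| - \log|\log c_w|$, so that the Birkhoff sum collapses to $-\log|\log c_w|$ by construction; the entire content of its proof is then a single regularity statement (Lemma \ref{lem_Lip}): the complex displacement $d_w$ is H\"older in the symbolic coordinate and $z\mapsto \log|\log\coth(z/2)|$ is Lipschitz on the region where orthogeodesics live, so $\tau$ is H\"older on $\Sigma_A^*$ and extends by uniform continuity. Your decomposition through the M\"obius derivative identity and the chain rule instead exhibits $\tau$ as the log-derivative potential $\tau_0$ plus a correction; this is closer in spirit to Lalley and Pollicott--Sharp, and it has the side benefit of making explicit that $\tau$ is cohomologous to the standard geometric potential (which the paper obtains later only through a separate comparison when identifying $\sigma_0=\delta$). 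Your algebra checks out: the cross-ratio identity for $1-c_w$ and $|wp-wq|=|p-q|\,|w'(p)w'(q)|^{1/2}$ are correct up to a harmless constant factor absorbed into $H$, and Lemma \ref{lem_lalley} does give the exponential decay of $|1-c_w|$ that you invoke.

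However, the step you label ``the main obstacle'' is not bookkeeping; it is the entire analytic content of the proposition, and you have not carried it out. To get the \emph{exact} identity $\tau^n(i_0\cdots i_n\dot 0) = -\log|\log c_w|$ at every finite stage, your coboundary is forced on $\Sigma_A^*$ to satisfy $u(i_0\cdots i_n\dot 0)-u(\dot 0) = -\log|\log c_w| - \tau_0^n(i_0\cdots i_n\dot 0)$, so proving that $u$ extends H\"older-continuously is precisely the statement the paper proves directly. Two specific points remain open in your sketch: (i) the correction $H(w)$ involves $|(wp-q)(wq-p)|^{-1}$ with $p,q\in\Lambda_\Gamma$, so you must invoke the RedLex double-coset restriction on admissible initial letters to keep $w\cdot\ell$ inside Schottky disks uniformly separated from $p$ and $q$; without this, $H$ is not even bounded, let alone H\"older as a function of the tail; (ii) $\pi(\sigma^k\underline{x})$ is not determined by the tail $\dot 0$, since $0$ is an added halt state rather than a point of the limit set, so $\tau_0$ and $u$ must be specified consistently on the $0$-states for the identity to hold exactly at small $n$ and not merely asymptotically. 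With those supplied the argument closes, but as written the proposal defers exactly the part that constitutes the proof.
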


\begin{proof}
	We first define a map $\tau: \Sigma_A^* \to \mathbb R$ on the set $$\Sigma_A^* = \{i_0 i_1 \cdots i_{n}\dot 0 ~|~ l(i_0,i_1)\cdots l(i_{n-1},i_n) \in \mathcal{L} \}$$ by
	$$\tau(i_0 i_1 \cdots i_{n} \dot 0) = \log \left|\log c_{\sigma (w)}\right|  -\log \left|\log c_w \right|$$
	for $|w| \ge 1$ and $\tau(\dot 0) = 0$. Here $\sigma(w) = l(i_1,i_2)\cdots l(i_{n-1},i_n)$. We assume the following lemma for now.
	
	\begin{lem}\label{lem_Lip}
		$\tau$ is H\"older continuous on $\Sigma_A^*$.
	\end{lem}
	
	Since $\Sigma_A^*$ is dense in $\Sigma_A^+$ and $\tau$ is uniformly continuous, $\tau$ extends to a H\"older map on the entire shift space $\Sigma_A^+$.
\end{proof}

\begin{proof}[Proof of Lemma \ref{lem_Lip}]
Since the displacement function $d_w = d(\ell, w\ell)$ is H\"older \cite{Pol}, we need to show that  $f(z) = \log |\log \coth(z/2)|$ is H\"older. First we note that there exists $c>0$ such that $Re|\gamma|>c$ for all orthogeodesics $\gamma$.

If $z \in \bbR$, $f(z) = \log \log \coth(z/2)$ is Lipschitz for $z \in (c, \infty)$ as $f'(z)$ is bounded on $(c, \infty)$.

If $z \in \bbC$, $\coth(z/2)$ does not approach $0$ or $\infty$ as $Re|\gamma|>c$ for some $c>0$. For a long word $w$, $w(0)$ and $w(\infty)$ can get close so that $w(0)/w(\infty) \to 1$. However, in this case, $z = 2\coth^{-1}(w(0)/w(\infty))$ has large real part and small imaginary part. Let $z_1 = x_1 + iy_1$ and $z_2 = x_2 + iy_2$ with $x_i$ large and $y_i$ small. Then $$|f(z_1) - f(z_2)| \le |f(x_1+iy_1) - f(x_2+iy_1)| +|f(x_2+iy_1) - f(x_2+iy_2)|$$ $$ \le C|x_1-x_2| + \varepsilon \le C|z_1-z_2|. $$
\end{proof}

\begin{prop}\label{prop_holder2}
	There exists a H\"older map $\theta: \Sigma_A^+ \to \mathbb R/2\pi\bbZ$ such that for any integer $n \ge 1$, we have $$ \theta^n(i_0 i_1 \cdots i_{n} \dot 0) =  \arg \left(\log c_w\right) $$
	where $\theta^n(x) = \sum_{j=0}^{n-1} \theta(\sigma^j x)$, $w = l(i_0,i_1)\cdots l(i_{n-1},i_n)$.
\end{prop}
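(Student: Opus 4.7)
The plan is to mirror the proof of Proposition \ref{prop_holder1}, replacing the modulus $-\log|\log c_w|$ with the argument $\arg(\log c_w)$ and replacing the target $\bbR$ with $\bbR/2\pi\bbZ$. First I would define $\theta$ on the dense subset
$$\Sigma_A^* = \{i_0 i_1 \cdots i_n \dot 0 \mid l(i_0,i_1)\cdots l(i_{n-1},i_n) \in \mathscr{L}\}$$
by the telescoping formula
$$\theta(i_0 i_1 \cdots i_n \dot 0) = \arg(\log c_w) - \arg(\log c_{\sigma w}) \pmod{2\pi}$$
for $|w|\ge 1$, and $\theta(\dot 0) = 0$, with the convention $\arg(\log c_\emptyset) = 0$. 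Denoting by $w_j$ the word associated with $\sigma^j(i_0 \cdots i_n \dot 0)$, the telescoping sum over the shift orbit collapses to
$$\theta^n(i_0 \cdots i_n \dot 0) = \sum_{j=0}^{n-1} \big(\arg(\log c_{w_j}) - \arg(\log c_{w_{j+1}})\big) = \arg(\log c_w) \pmod{2\pi},$$
as required.

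The substantive step is to establish that $\theta$ is H\"older continuous on $\Sigma_A^*$, in complete parallel with Lemma \ref{lem_Lip}. Since the complex displacement $d_w = d(\ell, w\ell)$ is H\"older in the coding sequence \cite{Pol} and satisfies $\textrm{Re}(d_w) > c > 0$ uniformly in $w$, the value $\log c_w = 2\log\coth(d_w/2)$ stays bounded away from both $0$ and $\infty$. Consequently the map $z \mapsto \arg(\log\coth(z/2))$ is Lipschitz as a function into $\bbR/2\pi\bbZ$ on the compact region of $\bbC$ where the $d_w$'s live. Combined with the exponential decay from Lalley's estimate $|w(\Lambda_\Gamma)| \le C\kappa^n$ (Lemma \ref{lem_lalley}), this gives $|\arg(\log c_w) - \arg(\log c_{\sigma w})|_{\bbR/2\pi\bbZ} \le C'\kappa^{n\alpha}$, the desired H\"older bound.

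Finally, since $\Sigma_A^*$ is dense in $\Sigma_A^+$ and $\theta$ is uniformly continuous with values in the compact group $\bbR/2\pi\bbZ$, it extends uniquely to a H\"older map on the full shift space. The main potential obstacle is the multi-valuedness of $\arg$: one might worry about branch cuts when $\log c_w$ and $\log c_{\sigma w}$ lie on opposite sides of the negative real axis. This is handled automatically by passing to the quotient $\bbR/2\pi\bbZ$, where the difference is unambiguous; all actual work reduces to a bounded-derivative estimate for $\arg \circ \log \circ \coth(\cdot/2)$ on a compact set bounded away from the singularities of $\log\coth$, which follows from the same analysis used for $f(z) = \log|\log\coth(z/2)|$ in Lemma \ref{lem_Lip}.
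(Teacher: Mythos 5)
Your scaffolding---define $\theta$ on the dense set $\Sigma_A^*$ by the telescoping difference $\arg(\log c_w)-\arg(\log c_{\sigma w})$, check that the Birkhoff sum collapses, and extend by uniform continuity---is exactly the paper's, and your remark that working in $\bbR/2\pi\bbZ$ disposes of branch ambiguities is correct. The gap is in the H\"older step, and it is not cosmetic. You assert that $\log c_w=2\log\coth(d_w/2)$ ``stays bounded away from both $0$ and $\infty$'' and that the $d_w$ live in a compact region. Neither is true: as $|w|\to\infty$ the endpoints $w\cdot\ell(\pm\infty)$ coalesce, $\mathrm{Re}(d_w)\to\infty$, $\coth(d_w/2)\to 1$, and hence $\log c_w\to 0$. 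The point $0$ is precisely where $\arg$ has unbounded derivative, so the appeal to ``Lipschitz on a compact set bounded away from the singularities'' is unavailable; indeed the whole difficulty of Lemma \ref{lem_Lip}, which you invoke as a black box, is caused by $\log\coth(z/2)\to 0$, and the decomposition used there for $\log|\cdot|$ does not transfer verbatim to $\arg$.

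Moreover, the estimate you derive, $|\arg(\log c_w)-\arg(\log c_{\sigma w})|\le C'\kappa^{n\alpha}$, is not the H\"older bound for $\theta$: it bounds the value $|\theta(w\dot 0)|$ at a single point rather than the oscillation of $\theta$ between two sequences agreeing in their first $n$ coordinates. It is also false in general, since that difference equals $\arg\bigl(\log c_w/\log c_{\sigma w}\bigr)$, whose limit along a fixed tail is the (generically nonzero) holonomy increment contributed by the first letter; were it $O(\kappa^{n\alpha})$, then $\theta$ would vanish on $\Sigma_A^+\setminus\Sigma_A^*$ and there would be no holonomy to equidistribute. What must actually be shown is that for $w=vw'$ and $u=vu'$ with $|v|=n$ the quantity $\bigl(\arg\log c_w-\arg\log c_{\sigma w}\bigr)-\bigl(\arg\log c_u-\arg\log c_{\sigma u}\bigr)$ is $O(\kappa^{n\alpha})$; the saving fact is that the ratio $\log c_w/\log c_{\sigma w}$, unlike either factor separately, stays bounded away from $0$ and $\infty$ and varies H\"older-continuously with the sequence. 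The paper implements this comparison by viewing $\arg\log(w(0)/w(\infty))$ as an analytic function of the endpoint pair on the region where $\mathrm{Re}\log(w(0)/w(\infty))>0$ and feeding in Lalley's contraction estimate for the endpoints; you need some version of that argument (or of the ratio argument just described) in place of the compactness appeal.
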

\begin{proof}
Following the proof of Proposition \ref{prop_holder1}, we need to show that the map $\theta: \Sigma^*_A \to \mathbb R/2\pi\bbZ$ given by 
\begin{align*}
\theta(i_0 i_1 \cdots i_{n} \dot 0) &= \arg \left(\log \frac{w(0)}{w(\infty)}\right) - \arg \left(\log \frac{\sigma(w)(0)}{\sigma(w)(\infty)}\right)\\
%& = \arg \left(\frac{w(0)}{w(\infty)} - \arg \left(\frac{\sigma(w)(0)}{\sigma(w)(\infty)}\right) \right)
\end{align*}
for $|w| \ge 1$ and $\theta(\dot 0) = 0$ is H\"older on $\Sigma_A^*$. 

Note that $\arg \left(\log \frac{x}{y}\right)$ is analytic in $(x,y)$ if $Re(\log \frac{x}{y})$ is positive. Let $w,u \in \Sigma_A^*$ with $d(w,u) = \kappa^n$. As elements in $\PSL(2,\bbC)$, $\rho(w)$ and $\rho(u)$ are orientation preserving so that the real part of $\log(w(0)/(w(\infty))$ is positive. Therefore, $|\arg \log\left(\frac{w(0)}{w(\infty)}\right)-\arg \log\left(\frac{u(0)}{u(\infty)}\right)| \le C\kappa^{n} R$ for some constant $C$ and $R$ is the diameter of the limit set. Similarly, $|\arg \log\left(\frac{\sigma(w)(0)}{\sigma(w)(\infty)}\right)-\arg \log\left(\frac{\sigma(u)(0)}{\sigma(u)(\infty)}\right)| \le C\kappa^{n-1} R$. Thus $\theta$ is H\"older since $\kappa^n+\kappa^{n-1} = O(\kappa^{n\alpha})$ for some $\alpha \in (0,1)$.
\end{proof}

\begin{lem}
$\tau$ is strongly non-integrable.
\end{lem}
\begin{proof}
For notational convenience, we set $\tau(w) = r(\sigma w) - r(w)$ where $r(w) = \log |\log (w(0)/w(\infty))|$. Suppose $\tau$ is not SNI. Then the temporal distance function
$$\phi_{\zeta, \eta}(\underline{i}, \underline{j}) = r(\zeta\underline{j}) - r(\zeta\underline{i}) + r(\eta\underline{i}) - r(\eta \underline{j})$$ is identically zero, i.e., 
\begin{align*}
r(\zeta\underline{j}) - r(\zeta\underline{i}) &= r(\eta\underline{j}) - r(\eta \underline{i})\\
\frac{|\log (\zeta\underline{j}(0)/\zeta\underline{j}(\infty))|}{|\log (\zeta\underline{i}(0)/\zeta\underline{i}(\infty))|} & =\frac{|\log (\eta\underline{j}(0)/\eta\underline{j}(\infty))|}{|\log (\eta\underline{i}(0)/\eta\underline{i}(\infty))|} 
\end{align*}
for any $\zeta, \eta, \underline{i}, \underline{j} \in \Sigma_A^+$.
This implies the derivative of the Bowen-Series map is constant on the Markov sets containing the limit set which is a contradiction.
\end{proof}

Given the H\"older potentials $\tau$ and $\theta$ constructed above, apply the iterates of $\mathcal L_{-s\tau, m\theta}: C^{\alpha}(\Sigma_A,\bbC) \to C^{\alpha}(\Sigma_A,\bbC)$ to the constant function $\mathds{1}$ at the point $\underline{x} = \dot 0$ and we obtain
\begin{align*}
\mathcal L_{-s\tau, m\theta}^n \mathds{1} (\dot 0)  &= \sum_{\sigma^n(y)=\dot 0} e^{-s\tau^n(y)+i m\theta^n(y)}= \sum_{\sigma^n(y)=\dot 0} \left(e^{-\tau^n( y)} \right)^s \cdot \left(e^{i\theta^n( y)} \right)^m\\
& =\sum_{|w|=n}\left(e^{\log |\log c_w| }\right)^s\left(\frac{\log c_w}{|\log c_w|}\right)^m \\
& = \sum_{|w|=n} \left(\frac{\log c_w}{|\log c_w|}\right)^m|\log c_w|^{s}.
\end{align*}

Therefore, we (formally) define a function $F(s,m)$ by
\begin{equation}
F(s,m) = \sum_{n=1}^\infty \mathcal L_{-s\tau, m\theta}^n\mathds{1}(\dot 0).
\end{equation}

This is an Hecke-type $L$-function of which the right-hand-side series of the Basmajian-type identity is a special value, namely $F(1,1)$.

% % % % % % % % % % % % % % % % % % % % % % % % % % % % % % % % % % % % % % % % % % % % % % % % % % % % % % % % % % % % % % % %% % % % % % % % % % % % % % % % % % % % % % % % % % % % % % % % % % % % % % % % % % % % % % % % % % % % % % % % % % % % % % % 
%\newpage
\section{Analytic properties of $F(s,m)$} \label{sec_anaFs}
In this section, we discuss analytic properties of $F(s,m)$. In particular, we show that $F(s,m)$ is analytic on the half-plane $Re(s)>\delta$ and can be extended (meromorphically) to a strip $Re(s) \in (\delta-\varepsilon,\delta]$ with $|F(s,m)|$ being uniformly bounded by $|Im(s)|+|m|$. These properties will be used to study the summatory functions for counting in the next section. We start by determining the domain of convergence of $F_m(s)$.

\begin{thm}[Convergence theorem] \label{thm_conv}
Fix an integer $m$.	The complex function $F_m(s) = F(s,m)$ converges when $Re(s) > \delta$ and diverges when $Re(s) < \delta$, where $\delta$ is the Hausdorff dimension of the limit set of $\Gamma$.
\end{thm}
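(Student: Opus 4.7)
The plan is to reduce convergence of the complex series $F_m(s)$ to that of a real, positive Poincar\'e-type sum controlling the $s$-dimensional Hausdorff measure of the limit set, and then to identify the critical exponent of that sum with $\delta$ by Bowen's formula. The spectral estimates of Theorem \ref{thm_RPF}, together with the reduction to a mixing subsystem from Lemma \ref{lem_samespec}, provide the analytic link.

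First, writing $s = \sigma + it$ and applying the identity $|e^{-s\tau^n(y)+im\theta^n(y)}| = e^{-\sigma\tau^n(y)}$ termwise in
\[
\mathcal{L}_{-s\tau,m\theta}^n \mathds{1}(\dot 0) = \sum_{\sigma^n(y)=\dot 0} e^{-s\tau^n(y)+im\theta^n(y)},
\]
I would obtain the pointwise majorant $|\mathcal{L}_{-s\tau,m\theta}^n \mathds{1}(\dot 0)| \le \mathcal{L}_{-\sigma\tau}^n \mathds{1}(\dot 0) = \sum_{|w|=n}|\log c_w|^{\sigma}$. By Lemma \ref{lem_samespec} the operator $\mathcal{L}_{-\sigma\tau}$ has the same spectrum as its restriction to the mixing subshift $\Sigma_B^+$, so Theorem \ref{thm_RPF}(1) gives $\mathcal{L}_{-\sigma\tau}^n \mathds{1}(\dot 0) = e^{n P(-\sigma\tau)}(c_\sigma + O(\rho^n))$ with $c_\sigma > 0$ and $0 < \rho < 1$. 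Since $\mathrm{Re}(d_w) \to \infty$ along long words, $\tau^n = -\log|\log c_w| \to +\infty$, so $\tau$ is eventually positive and Theorem \ref{thm_RPF}(3) ensures that $\sigma \mapsto P(-\sigma\tau)$ is real-analytic, strictly decreasing, and has a unique zero $\sigma_0$.

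The identification $\sigma_0 = \delta$ is Bowen's formula, and is the technical heart of the argument. Using the expansion $\log\coth(z/2) = 2e^{-z} + O(e^{-3z})$ valid for $\mathrm{Re}(z)\ge c > 0$, together with Lemma \ref{lem_lalley}, I would establish the two-sided comparison
\[
|\log c_w| \;\asymp\; e^{-\mathrm{Re}(d_w)} \;\asymp\; |w(\Lambda_\Gamma)|
\]
uniformly in $w$. This says that $\tau$ is cohomologous, up to a bounded coboundary, to the standard log-derivative potential of the Bowen-Series map, so Bowen's formula applies and $\sigma_0$ is exactly the Hausdorff dimension $\delta$ of the limit set. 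Combining with the previous paragraph, $P(-\sigma\tau) < 0$ precisely for $\sigma > \delta$, and the geometric majorant is summable there, which yields absolute convergence of $F(s,m)$ on $\mathrm{Re}(s) > \delta$.

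For $\mathrm{Re}(s) < \delta$, the positive real series $F_0(\sigma) = \sum_n \mathcal{L}_{-\sigma\tau}^n \mathds{1}(\dot 0)$ diverges immediately because its $n$-th term grows like $e^{nP(-\sigma\tau)} \to \infty$. For $m \neq 0$, the complex Ruelle-Perron-Frobenius theorem (Theorem \ref{thm_RPF}(2)) shows that the spectral radius of $\mathcal{L}_{-s\tau,m\theta}$ is still $e^{P(-\sigma\tau)} > 1$ (the exceptional cohomological obstruction does not occur in the eventually-positive setting), so the terms of $F(s,m)$ fail to tend to zero and the series diverges. The main obstacle in executing the plan is the uniform comparison $|\log c_w| \asymp |w(\Lambda_\Gamma)|$ that makes Bowen's formula applicable: this is where the hyperbolic geometry of the Schottky representation must be checked carefully, both to rule out pathological short-word behaviour and to bridge the distance-versus-cross-ratio interpretations of $c_w$.
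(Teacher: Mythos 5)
Your proposal follows essentially the same route as the paper: pass to the mixing subshift via Lemma \ref{lem_samespec}, control the terms by the spectral radius $e^{P(-\sigma\tau)}$ from Theorem \ref{thm_RPF}, use monotonicity of the pressure to locate the critical exponent $\sigma_0$, and identify $\sigma_0=\delta$ by Bowen's theorem together with the comparison $|\log c_w|\asymp e^{-\mathrm{Re}(d_w)}\asymp |w(\Lambda_\Gamma)|$ (which is exactly the ``comparison calculation in \cite{He}'' the paper cites); you actually spell out that comparison more explicitly than the paper does. The one concrete slip is in your divergence argument for $m\neq 0$: you invoke Theorem \ref{thm_RPF}(2) to say the spectral radius of $\mathcal{L}_{-s\tau,m\theta}$ is \emph{still} $e^{P(-\sigma\tau)}$ ``because the exceptional cohomological obstruction does not occur,'' but the theorem says the opposite --- equality of the spectral radius with $e^{P(-\mathrm{Re}(s)\tau)}$ holds precisely \emph{when} the cohomological identity is satisfied, and when it fails the spectral radius is strictly smaller (and, in any case, a lower bound on the spectral radius does not by itself prevent cancellation in the pointwise values $\mathcal{L}^n\mathds{1}(\dot 0)$). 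The paper is equally terse here (its $F_m(s)\asymp\sum_n e^{nP(-\sigma\tau')}$ is only justified as an upper bound for complex $s$ and $m\neq 0$), and only the absolute-convergence dichotomy is used downstream, so this does not damage the main line of your argument; but as written that sentence is backwards and should be repaired or the divergence claim restricted to divergence of the absolute series.
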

\begin{proof}
	Write $s = \sigma+it$. By Lemma \ref{lem_samespec}, $\mathcal L_{-\sigma \tau +i(-t\tau+m\theta)}$ and $\mathcal L_{-\sigma \tau' +i(-t\tau'+m\theta')}$ have the same spectra. By (2) of Theorem \ref{thm_RPF}, $$F_m(s) \asymp \sum_{n=1}^{\infty}\left(e^{P(-\sigma \tau')}\right)^n$$ which converges if and only if $P(-\sigma \tau')<0$. Since $P$ is monotone decreasing in $\sigma$ and $P$ has a unique zero at $\sigma_0$, $P(-\sigma \tau')<0$ if and only if $Re(s) > \sigma_0$. By Bowen's theorem \cite{Bowen} and the comparison calculation in \cite{He}, $\sigma_0 = \delta$.
\end{proof}

An immediate corollary of the proposition is an earlier result of the author.
\begin{cor} [\cite{He}]
	The right hand side series in Basmajian identity is absolutely convergent if and only if the Hausdorff dimension of the limit set is strictly smaller than $1$.
\end{cor}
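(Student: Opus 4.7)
The plan is to obtain this as a direct specialization of Theorem \ref{thm_conv}. Unwinding the formula $F_m(s) = \sum_n \mathcal{L}_{-s\tau, m\theta}^n \mathds{1}(\dot 0)$ derived at the end of Section \ref{sec_Fs}, one has
\[
F(s,m) \;=\; \sum_{w \in \mathscr{L}} \left(\frac{\log c_w}{|\log c_w|}\right)^{m} |\log c_w|^{s},
\]
where the sum ranges over all words in the regular language. Evaluating at $m=0$ and $s=1$ kills the phase factor and collapses the absolute value expression to
\[
F(1,0) \;=\; \sum_{w \in \mathscr{L}} |\log c_w|,
\]
which is precisely the series of absolute values of the terms on the right-hand side of the Basmajian(-type) identity. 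So absolute convergence of the Basmajian series is equivalent to the finiteness of $F(1,0)$.

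Now I would simply apply Theorem \ref{thm_conv} at the character $m=0$ with $s=1$. The theorem asserts that $F_0(s)$ converges for $\mathrm{Re}(s) > \delta$ and diverges for $\mathrm{Re}(s) < \delta$. Consequently $F(1,0)$ is finite when $1 > \delta$ and infinite when $1 < \delta$, which is the ``if and only if'' in the corollary, up to the borderline $\delta = 1$.

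To close out the critical case $\delta=1$, which the statement of Theorem \ref{thm_conv} does not explicitly address, I would invoke the sharper asymptotic that already appears inside its proof, namely
\[
F_0(s) \;\asymp\; \sum_{n=1}^{\infty} \bigl(e^{P(-\sigma \tau')}\bigr)^{n}.
\]
By Bowen's characterization of the Hausdorff dimension, $\sigma \mapsto P(-\sigma\tau')$ vanishes precisely at $\sigma_0 = \delta$, so at $\sigma=\delta$ the geometric ratio equals $1$ and the comparison series is $\sum_n 1 = \infty$. This upgrades the conclusion to: $F(1,0) < \infty$ if and only if $\delta < 1$, giving the corollary. There is no real obstacle; the only subtlety is importing the slightly stronger boundary statement from within the proof of Theorem \ref{thm_conv} rather than from its clean formulation.
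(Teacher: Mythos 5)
Your proof is correct and follows the same route as the paper, which simply applies the convergence theorem with $s=1$, $m=0$. Your extra step handling the borderline case $\delta=1$ via the pressure asymptotic $F_0(s)\asymp\sum_n e^{nP(-\sigma\tau')}$ is a welcome refinement that the paper's one-line proof glosses over, but it is not a different approach.
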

\begin{proof}
	Apply the above proposition with $s=1, m=0$.
\end{proof}

\begin{prop} \label{lem_|F|}
	There exist $R>0, \epsilon>0$ and  $1<\beta <2$ such that if $|\sigma-\sigma_0|<\epsilon$ and $|t| > R$, 
	$$|F_m(s)| \le O((|t|+|m|)^{\beta}).$$
\end{prop}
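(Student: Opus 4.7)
The proof should be a direct application of the Dolgopyat-type estimate (Theorem \ref{thm_Dolg}) to the definition of $F_m(s)$ as an infinite series of iterates of a twisted transfer operator. The plan is to bound the series term-by-term in operator norm and then sum a geometric series.

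First I would start from the defining formula $F_m(s) = \sum_{n=1}^\infty \mathcal{L}_{-s\tau, m\theta}^n \mathds{1}(\dot 0)$ and apply the triangle inequality. Since point evaluation is dominated by the sup norm, and the sup norm by the $C^\alpha$ norm, I get
\begin{equation*}
|\mathcal{L}_{-s\tau, m\theta}^n \mathds{1}(\dot 0)| \;\le\; \|\mathcal{L}_{-s\tau, m\theta}^n \mathds{1}\|_{C^\alpha} \;\le\; \|\mathcal{L}_{-s\tau, m\theta}^n\|_{\mathrm{op}} \cdot \|\mathds{1}\|_{C^\alpha} \;=\; \|\mathcal{L}_{-s\tau, m\theta}^n\|_{\mathrm{op}}.
\end{equation*}
The preceding lemma established that $\tau$ is strongly non-integrable, and $\tau$ is eventually positive (it measures $-\log|\log c_w|$ with complex length bounded below by Lemma \ref{lem_lalley}), so all hypotheses of Theorem \ref{thm_Dolg} are verified. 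Thus, provided $|\sigma - \sigma_0| < \epsilon$ and $|t| > R$, there exist $1 < \beta < 2$, $C > 0$, and $0 < \rho < 1$ with $\|\mathcal{L}_{-s\tau, m\theta}^n\|_{\mathrm{op}} \le C(|t|+|m|)^\beta \rho^n$. Summing the geometric series yields
\begin{equation*}
|F_m(s)| \;\le\; \sum_{n=1}^\infty C(|t|+|m|)^\beta \rho^n \;=\; \tfrac{C\rho}{1-\rho}\,(|t|+|m|)^\beta \;=\; O\!\left((|t|+|m|)^\beta\right),
\end{equation*}
which is exactly the stated bound.

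The one subtlety to address is that $\Sigma_A^+$ is in general non-mixing because of the double-coset boundary conditions together with the augmented vertex $0$, so Dolgopyat's machinery as written is most naturally phrased on the mixing subshift $\Sigma_B^+$ of reduced words. The expected obstacle, and the only nontrivial check, is to transfer the operator norm estimate from $\mathcal{L}_{-s\tau', m\theta'}$ on $C^\alpha(\Sigma_B^+)$ back to $\mathcal{L}_{-s\tau, m\theta}$ on $C^\alpha(\Sigma_A^+)$. I would handle this using Lemma \ref{lem_samespec}: the two operators are quasi-compact with the same spectra, and the point evaluation at $\dot 0$ can be expanded as a finite sum (over the bounded-length prefixes and suffixes encoding the starting/ending states and the transition into the vertex $0$) of point evaluations of $\mathcal{L}_{-s\tau', m\theta'}^{n - O(1)}$ on $\Sigma_B^+$. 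Since there are only finitely many such decorations, independent of $n$, $t$, and $m$, they contribute only a multiplicative constant which is absorbed in the $O(\cdot)$. Everything else is routine.
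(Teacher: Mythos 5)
Your proposal is correct and follows essentially the same route as the paper: apply the Dolgopyat-type estimate of Theorem \ref{thm_Dolg} to bound $\|\mathcal{L}_{-s\tau,m\theta}^n\|$ by $C(|t|+|m|)^{\beta}\rho^n$ and sum the geometric series. Your additional verification of the hypotheses (strong non-integrability, eventual positivity) and your remark on passing between $\Sigma_A^+$ and the mixing subshift $\Sigma_B^+$ are more careful than the paper's two-line argument, but do not change the method.
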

\begin{proof}
	By Theorem \ref{thm_Dolg}, $||\mathcal{L}_{-s\tau, m\theta}^n \mathds{1}|| \le C(|t|+|m|)^{\beta}\rho^n$ for some $0<\rho<1$. Hence, $|F_m(s)| \le \dfrac{C}{1-\rho} (|t|+|m|)^{\beta}$.
\end{proof}

\begin{prop} \label{thm_simplepole}
Let $\varepsilon>0$ be as in the above proposition.
\begin{enumerate}
\item If $m=0$, the function $F_0(s)$ is analytic on the half-plane $Re(s) > \delta- \varepsilon$ except for a simple pole at $s = \delta$ with positive residue. It does not have any other poles on the line $Re(s) = \delta$.
\item If $m \neq 0$, the function $F_m(s)$ is analytic on the half-plane $Re(s) > \delta- \varepsilon$.
\end{enumerate}
\end{prop}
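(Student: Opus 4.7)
The plan is to analyze $F_m(s) = \sum_{n=1}^\infty \mathcal{L}_{-s\tau, m\theta}^n \mathbf{1}(\dot 0)$ via the spectral decomposition of the twisted transfer operator, working on the mixing subshift $\Sigma_B^+$ (via Lemma \ref{lem_samespec}) where the RPF theorem is available. Formally, whenever $\rho(\mathcal{L}_{-s\tau, m\theta}) < 1$ we have
\[
F_m(s) = \bigl[(I - \mathcal{L}_{-s\tau, m\theta})^{-1} \mathcal{L}_{-s\tau, m\theta}\mathbf{1}\bigr](\dot 0),
\]
which is meromorphic in $s$, with poles only where $1$ is an eigenvalue of $\mathcal{L}_{-s\tau, m\theta}$. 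So everything reduces to locating these values.

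For part (1), I take $m=0$. By Lemma \ref{lem_samespec} together with parts (1) and (3) of Theorem \ref{thm_RPF}, the real-analytic map $s \mapsto \lambda(s) = e^{P(-s\tau')}$ is strictly decreasing, $\lambda(\delta) = 1$ (since by Theorem \ref{thm_conv} and Bowen's theorem $\delta$ is the zero of pressure), and $\lambda'(\delta) < 0$. Analytic perturbation theory extends $\lambda(s)$ and its spectral projection $P(s)$ holomorphically to a disk around $\delta$, giving
\[
\mathcal{L}_{-s\tau} = \lambda(s) P(s) + N(s), \qquad \sup_{|s-\delta|<\varepsilon} \|N(s)^n\| \le C\rho_0^n,\quad \rho_0 < 1.
\]
Substituting into the series, the $N(s)$-part contributes a holomorphic function while the rank-one part yields $\tfrac{\lambda(s)}{1-\lambda(s)}\bigl(P(s)\mathbf{1}\bigr)(\dot 0)$, producing a simple pole at $s=\delta$ with residue $-\bigl(P(\delta)\mathbf{1}\bigr)(\dot 0)/\lambda'(\delta) > 0$ (positivity from the strictly positive eigenfunction in RPF).

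The remaining task is to show that $1$ is not an eigenvalue elsewhere in the strip $\operatorname{Re}(s) > \delta - \varepsilon$. Part (2) of Theorem \ref{thm_RPF} says $\rho(\mathcal{L}_{-s\tau}) < e^{P(-\operatorname{Re}(s)\tau)}$ unless $\operatorname{Im}(s)\tau$ is cohomologous to a function in $C^0(\Sigma_B^+, 2\pi\bbZ)$ plus a constant. The SNI property of $\tau$ established in the previous section rules out any such cohomology for $t \neq 0$ (SNI is Dolgopyat's non-lattice condition). For $|t|$ large, Theorem \ref{thm_Dolg} supplies the bound $\|\mathcal{L}_{-s\tau,m\theta}^n\|\le C(|t|+|m|)^\beta\rho^n$ directly; for $|t|$ in a bounded non-zero range, continuity of the spectral radius under the analytic perturbation $s\mapsto \mathcal{L}_{-s\tau,m\theta}$ combined with the pointwise strict inequality gives a uniform gap on compacta. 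Gluing these three regimes (near $s=\delta$, bounded $|t|$ bounded away from $0$, and $|t|>R$) yields an $\varepsilon > 0$ such that $\mathcal{L}_{-s\tau}$ has no eigenvalue $1$ on $\{\operatorname{Re}(s) > \delta - \varepsilon\}\setminus\{\delta\}$, proving (1). For part (2) with $m\ne 0$, the same trichotomy works: Theorem \ref{thm_Dolg} handles $|t|>R$ uniformly in $m$, while for $|t|\le R$ one uses RPF part (2) applied to the twisted operator — equality of spectral radii would force $-t\tau + m\theta$ to be cohomologous to a $2\pi\bbZ$-valued function plus a constant, which is excluded by SNI together with the fact that $m\theta$ has nontrivial rotational content (exactly as in Proposition \ref{lem_|F|} the twist does not cancel $\tau$), so in particular $\lambda(s)\neq 1$ at $s=\delta$ and analyticity extends to the full strip.

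The main obstacle is the uniform handling of the intermediate regime $0 < |t| \le R$. Pointwise one gets strict spectral inequality from RPF and SNI, but packaging this into a uniform bound on a compact segment of the critical line (and then extending slightly to the left by holomorphic perturbation of the spectrum) requires a continuity/compactness argument for the operator-valued map $s\mapsto \mathcal{L}_{-s\tau,m\theta}$. A secondary subtle point is verifying that SNI of $\tau$ alone is enough in the twisted case $m\ne 0$ to exclude cohomology of $-t\tau+m\theta$ into $2\pi\bbZ$; one argues by projecting to real and imaginary parts so that SNI of $\tau$ forces $t=0$ and then observing that $m\theta$ with $m\ne 0$ cannot be cohomologous to a constant mod $2\pi\bbZ$ under the standing non-Fuchsian hypothesis used throughout this section.
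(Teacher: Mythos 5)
Your overall architecture --- passing to the mixing subshift via Lemma \ref{lem_samespec}, summing the geometric series of transfer operators, isolating the maximal eigenvalue by analytic perturbation theory to produce the simple pole at $s=\delta$ with residue $-\bigl(\bbP_{\lambda}\mathds{1}\bigr)(\dot 0)/\lambda'(\delta)>0$, and invoking Theorem \ref{thm_Dolg} for $|t|>R$ --- coincides with the paper's. The one step where you genuinely diverge is the exclusion of poles at $s=\delta+it$, $t\neq 0$. The paper rules out the cohomological equation $t\tau'=u\circ\sigma-u+\Psi+a$ geometrically: evaluating on periodic orbits, the equation would force the set $\{\log\coth(|\gamma|/2)\}$ to be discrete, contradicting ergodicity of the geodesic flow on $\bbH^3/F_n$. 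You instead invoke strong non-integrability as ``Dolgopyat's non-lattice condition,'' and this is where your argument is not airtight. SNI as defined in Section 2 furnishes a single quadruple with $\delta_0\kappa^{N\alpha}\le|\phi_{\zeta,\eta}(\underline{i},\underline{j})|\le \tfrac{2}{1-\kappa}\|\tau\|\,\kappa^{N\alpha}$. If $t\tau$ were cohomologous to a $2\pi\bbZ$-valued function plus a constant, the temporal distance $t\,\phi_{\zeta,\eta}$ would take values in $2\pi\bbZ$; this contradicts SNI only when $|t|\cdot\tfrac{2}{1-\kappa}\|\tau\|\,\kappa^{N\alpha}<2\pi$, i.e.\ for $|t|$ small. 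So in your intermediate regime ($|t|$ bounded away from $0$ but below the Dolgopyat threshold $R$) the ``pointwise strict spectral inequality'' that your compactness argument presupposes is not yet established: SNI alone does not supply it, and this is precisely the hole that the paper's ergodicity/non-discreteness argument fills uniformly in $t\neq0$. You should either adopt that argument or prove a genuine non-lattice statement for $\tau$ valid for all $t\neq 0$.

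On part (2) you are, if anything, more careful than the paper, which dismisses $m\neq 0$ as an immediate consequence of Proposition \ref{lem_|F|} even though that bound only covers $|t|>R$. Your observations that one must separately exclude $-t\tau+m\theta$ being cohomologous into $2\pi\bbZ$ for $|t|\le R$ (including $t=0$), and that this requires the non-Fuchsian hypothesis --- for Fuchsian groups $\theta\equiv 0$, so $F_m=F_0$ and the claimed analyticity fails at $s=\delta$ --- are correct and worth making explicit, since that hypothesis appears in Theorem \ref{thm_error}(2) but is suppressed in the statement of this proposition. The same caveat about actually justifying the strict spectral inequality on the intermediate regime applies to this case as well.
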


\begin{proof}
\begin{enumerate}
\item  The case $m=0$ is well-known and goes back to Parry-Pollicott \cite{PP} and Pollicott-Sharp \cite{PolSharp2}. For the convenience of the reader, we give a proof here.
We first prove that $s = \delta$ is a simple pole with positive residue. If $s \in \bbR$ and $s$ is in a small neighbourhood of $\delta$, then
\begin{align*}
F_0(s) &= \dfrac{\bbP_{\lambda}\mathcal L_{-sr} \mathds{1}(\dot 0)}{1-e^{P(-sr)}} + B(s).
\end{align*}
$\delta$ is a simple pole as $P(-sr)$ has a unique zero at $s= \delta$ by Theorem \ref{thm_RPF} part (3). The residue of $F_0(s)$ at $s=\delta$ is $$\displaystyle\lim_{s \to \delta}\dfrac{s-\delta}{1-e^{P(-sr)}} = \dfrac{1}{-P'(\delta r)}>0$$ as the pressure is decreasing with $s$.
	
Now we show $F_0(s)$ has no other poles on the line $Re(s) = \delta$. Suppose $F_0(s)$ has another pole at $s' = \delta+it$. If the spectral radius $\rho(\mathcal{L}_{-s'r})=1$, then by part (2) of Theorem \ref{thm_RPF}, 
	\begin{equation}\label{eq_a}
	Im(-s'r') = tr' = u \circ \sigma - u + \Psi +a
	\end{equation}
	for some $u \in C^0(\Sigma_B,\bbR), \Psi \in C^0(\Sigma_B, 2\pi\bbZ)$ and $a \in \bbR$ and $e^{ia}$ is the simple maximum eigenvalue for $\mathcal{L}_{-s'r'}$. By perturbation theory of linear operators, there exists a neighbourhood $U$ of $s'$
	such that $e^{ia}$ is still the simple maximum eigenvalue $e^{P(-sr)}$ for $\mathcal{L}_{-sr}, s \in U$. Then for $s \in U$, using the same projection idea as in the previous  proof,
	\begin{align*}
	F_0(s) &= \left(\sum_{n=1}^{\infty} \mathcal L_{-sr}^n \bbP_{\lambda max} \mathds{1} \right) (\dot 0) + \left(\sum_{n=1}^{\infty} \mathcal L_{-sr}^n Q' \mathds{1} \right) (\dot 0)\\
	&= \left(\sum_{n=0}^{\infty} e^{ian} \bbP_{\lambda max}\mathcal L_{-sr} \mathds{1} \right) (\dot 0) + \left(\sum_{n=1}^{\infty} \mathcal L_{-sr}^n Q' \mathds{1} \right) (\dot 0)\\
	&= \left((1-e^{ia})^{-1} \bbP_{\lambda max}\mathcal L_{-sr} \mathds{1} \right) (\dot 0) + B(s)
	\end{align*}
	where $Q':C^{\alpha}(\Sigma_A, \bbC) \to C^{\alpha}(\Sigma_A, \bbC)$ is the projection associated to the spectrum $|z| < \lambda max$.
	
	$B(s)$ is analytic as the spetral radius of $L_{-sr} Q'$ is strictly smaller than $1$. Hence, $F_0(s)$ has a pole at $s'$ if and only if $e^{ia}=1$, i.e. $a=0$. Hence, equation \ref{eq_a} becomes
	\begin{equation}\label{eq_a=0}
	tr' = u \circ \sigma - u + \Psi 
	\end{equation}
	
	If $\underline{x} \in \Sigma_B$ is a periodic point with $\sigma^n (\underline{x}) = \underline{x}$, $\underline{x}$ is identified via the symbolic coding with a reduced word $w$ in the fundamental group whose homotopy class contains a unique geodesic $\gamma$. On the other hand, for such points $\underline{x}$, $-tr'^n(\underline{x}) \in 2\pi\bbZ$ by equation (\ref{eq_a=0}).  This implies that the set $\{\log\coth(|\gamma|/2) : \gamma \text{ closed geodesic on } \bbH^3/F_n\}$ is discrete, which is a contradiction as the geodesic flow of $\bbH^3/F_n$ is ergodic.

\item The case $m \neq 0$: this is an immediate consequence of Proposition \ref{lem_|F|}.
\end{enumerate}
\end{proof}

% % % % % % % % % % % % % % % % % % % % % % % % % % % % % % % % % % % % % % % % % % % % % % % % % % % % % % % %
% % % % % % % % % % % % % % % % % % % % % % % % % % % % % % % % % % % % % % % % % % % % % % % % % % % % % % % %
\section{Counting complex orthospectrum with error and equidistribution of holonomy} \label{sec_count}
In this section, we adopt methods from analytic number theory to study the summatory function
$$M_m(x) \defeq \sum_{|\log c_w|^{-1}\le x} \left(\frac{\log c_w}{|\log c_w|}\right)^m$$ associated to the $L$-function $F(s,m) = \displaystyle\sum_w \left(\dfrac{\log c_w}{|\log c_w|}\right)^m |\log c_w |^s$, where $s \in \bbC$ and $m \in \bbZ$. 
Our main theorem is the following.

\begin{thm}\label{thm_error}
Let $\delta$ be the Hausdorff dimension of the limit set.
\begin{enumerate}
\item For $m=0$, there exists $C_1>0, 0< d_1 <\delta$ such that for any $x \ge 1$, $$M_0(x) = \text{Card }\{w ~|~ |\log c_w | \ge 1/x\} = C_1 x^{\delta}+ O(x^{d_1}).$$
\item For $m \neq 0$ and non-Fuchsian Schottky groups, there exists $0< d_1 <\delta, 1<\beta<2$ such that for any $x \ge 1$, $$M_m(x) = O((|m|+1)^{\beta}x^{d_1}).$$
\end{enumerate}
\end{thm}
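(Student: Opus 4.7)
The plan is to follow the analytic number theory blueprint announced in the introduction: express $M_m(x)$ (or rather a suitable iterated integral of it) as a contour integral of $F(s,m)$ via Perron's formula, shift the contour past the critical line $\mathrm{Re}(s)=\delta$ into the strip $\mathrm{Re}(s)>\delta-\varepsilon$ where Proposition \ref{thm_simplepole} controls the poles and Proposition \ref{lem_|F|} controls the size, and then read off the main term from the residue at $s=\delta$.

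The first step is to fix $c>\delta$ and use the $k^{\text{th}}$-order Perron formula
\[
M_m^k(x) \;=\; \frac{1}{2\pi i}\int_{c-i\infty}^{c+i\infty} F(s,m)\,\frac{x^{s+k}}{s(s+1)\cdots(s+k)}\,ds,
\]
for a $k$ chosen large enough that the integral is absolutely convergent once the bound $|F_m(s)|=O((|t|+|m|)^{\beta})$ from Proposition \ref{lem_|F|} is invoked: since the denominator has size $\asymp |t|^{k+1}$, any $k>\beta-1$ works and I will take $k=2$. Next I deform the contour to the broken line consisting of the vertical segment $\mathrm{Re}(s)=\delta-\varepsilon$, $|\mathrm{Im}(s)|\le T$, joined by two horizontal segments at height $\pm T$ to the original line, and finally let $T\to\infty$. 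By Proposition \ref{thm_simplepole}, the only singularity swept in the $m=0$ case is a simple pole at $s=\delta$, contributing
\[
\mathrm{Res}_{s=\delta}\frac{F_0(s)\,x^{s+k}}{s(s+1)\cdots(s+k)} \;=\; \frac{\mathrm{Res}_{s=\delta}F_0(s)}{\delta(\delta+1)\cdots(\delta+k)}\,x^{\delta+k},
\]
and no residues in the $m\neq 0$ case. The horizontal segments contribute $o(1)$ as $T\to\infty$ thanks to the polynomial decay in $|t|$, and the shifted vertical integral is bounded, after pulling out $x^{k+\delta-\varepsilon}$, by
\[
x^{k+\delta-\varepsilon}\int_{-\infty}^{\infty}\frac{(|t|+|m|)^{\beta}}{(1+|t|)^{k+1}}\,dt \;=\; O\bigl((|m|+1)^{\beta}\,x^{k+d_1}\bigr)
\]
with $d_1=\delta-\varepsilon$. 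This proves the theorem for the integrated quantity $M_m^k(x)$.

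The third and most delicate step is to recover $M_m(x)$ from $M_m^k(x)$ with the error term intact. The standard device is the $k^{\text{th}}$-order finite-difference operator $\Delta_h^k$ of step $h>0$: one has
\[
\Delta_h^k M_m^k(x) \;=\; k!\,h^{k}\,M_m(x)\;+\;(\text{smoothing error}),
\]
and applying this identity to both the asymptotic formula and the bound from the previous paragraph yields, after dividing by $k!\,h^k$, an expression for $M_m(x)$ with error $O(h^{-k}\,(|m|+1)^{\beta}x^{k+d_1}) + O(h\cdot\text{jump size})$. Optimizing $h$ as a small power of $x$ balances these two contributions and produces the claimed exponent $d_1<\delta$. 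In the $m=0$ case the argument simplifies because $M_0$ is monotone, so that a sandwich estimate with $k=1$ already suffices:
\[
M_0(x) \;\le\; \frac{M_0^1(x+h)-M_0^1(x)}{h} \;\le\; M_0(x+h),
\]
and the optimal choice $h=x^{(d_1-\delta+2)/2}$ yields the stated main term $C_1 x^{\delta}$ with error $O(x^{(d_1+\delta)/2})$, which is of the required form after redefining $d_1$.

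I expect the main obstacle to lie in the last step, the unwinding from $M_m^k$ to $M_m$: in the Fuchsian-type monotone case this is classical, but for $m\neq 0$ the summand is complex-valued and oscillatory, so the finite-difference argument has to be set up carefully and the dependence of the implied constant on $|m|$ must be tracked through every estimate (in particular through the bound on the jump of $M_m$, which is controlled by $1$ uniformly). Once that bookkeeping is done, the optimization of $h$ produces an exponent strictly between $d_1$ and $\delta$, which can then be absorbed into the constant $d_1$ appearing in the statement.
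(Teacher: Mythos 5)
Your proposal is correct and follows essentially the same route as the paper: the second-order Perron formula for the twice-integrated summatory function $\widehat{\widehat{M}}_m(x)$, a contour shift to $\mathrm{Re}(s)=\delta-\varepsilon$ using Propositions \ref{lem_|F|} and \ref{thm_simplepole} to extract the residue at $s=\delta$ and bound the remaining integrals, and then a Tauberian differencing to unwind back to $M_m(x)$ (the paper uses multiplicative increments $\eta x$ where you use additive steps $h$, but these are the same device). Your treatment of the unwinding, in particular the explicit optimization of the step size and the remark that for $m\neq 0$ the jump over a window must be controlled via the $m=0$ count, is actually spelled out more carefully than in the paper, which disposes of the $m\neq 0$ case with ``the same argument''; the only slip is the claim that any $k>\beta-1$ gives absolute convergence of the Perron integral (one needs $k>\beta$), which is harmless since you take $k=2$.
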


Denote $N(x) \defeq \text{Card} \{\gamma : Re(|\gamma| )\le x \}$, where $|\gamma|$ denotes the {\it complex} length of the orthogeodesic $\gamma$. An immediate corollary of Theorem \ref{thm_error} (1) gives the asymptotic counting of complex orthospectrum. 
\begin{cor}[Counting complex orthospectrum]   \label{cor_count}
There exist constants $0< d_1 <\delta$ and $C_1 >0$ such that $$N(x) = C_1e^{\delta x} + O(e^{d_1 x})$$ as $x \to \infty$. Here $\delta$ is the Haudorff dimension of the limit set.
\end{cor}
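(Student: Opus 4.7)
The plan is to deduce Corollary~5.2 from Theorem~5.1(1) by a change of variables that converts the counting parameter $|\log c_w|^{-1}$ into $\operatorname{Re}(|\gamma|)$. The bridge is a precise asymptotic relating these two quantities when the word $w$ (and hence $\operatorname{Re}(d_w)$) is large.

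First I would compute the relevant expansion. Since $c_w = \coth^2(d_w/2)$ and $\coth(z/2) = 1 + 2e^{-z} + O(e^{-2z})$ for $\operatorname{Re}(z)$ large, one finds
\[
\log c_w \;=\; 2\log\coth(d_w/2) \;=\; 4e^{-d_w}\bigl(1 + O(e^{-\operatorname{Re}(d_w)})\bigr),
\]
so that
\[
|\log c_w|^{-1} \;=\; \tfrac{1}{4} e^{\operatorname{Re}(d_w)}\bigl(1 + O(e^{-\operatorname{Re}(d_w)})\bigr).
\]
(The expansion holds uniformly in $w$ because for Schottky representations there is a uniform lower bound $\operatorname{Re}(d_w) \ge c > 0$, keeping $\coth(d_w/2)$ bounded away from $0$ and $\infty$.) Set $X = \log(4y)$, i.e.\ $y = e^X/4$. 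The above asymptotic yields a constant $C_0 > 0$ such that, for $X$ sufficiently large,
\[
\{w : \operatorname{Re}(d_w) \le X - C_0 e^{-X}\} \;\subseteq\; \{w : |\log c_w|^{-1} \le e^X/4\} \;\subseteq\; \{w : \operatorname{Re}(d_w) \le X + C_0 e^{-X}\},
\]
which is the sandwich
\[
N(X - C_0 e^{-X}) \;\le\; M_0(e^X/4) \;\le\; N(X + C_0 e^{-X}).
\]

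Next I would substitute $y = e^X/4$ into Theorem~5.1(1), giving $M_0(e^X/4) = C_1 4^{-\delta}\, e^{\delta X} + O(e^{d_1 X})$, and then invert the sandwich. Writing $Y = X \pm C_0 e^{-X}$ and noting that $e^{\delta X} = e^{\delta Y}(1 + O(e^{-Y}))$, both the upper and lower bounds yield
\[
N(Y) \;=\; C_1 4^{-\delta}\, e^{\delta Y} \;+\; O\bigl(e^{(\delta-1)Y}\bigr) \;+\; O\bigl(e^{d_1 Y}\bigr).
\]
Taking $\tilde{C}_1 \defeq C_1 4^{-\delta}$ and replacing $d_1$ by $\max(d_1,\delta-1) < \delta$ if necessary, this is exactly the claimed estimate $N(x) = \tilde{C}_1 e^{\delta x} + O(e^{d_1 x})$.

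The only delicate point is the propagation of the $O(e^{-\operatorname{Re}(d_w)})$ correction in Step~1 through the inverse asymptotic for $N$; but because this correction is \emph{exponentially} smaller than the main scale $e^{X}$, the induced fluctuation in the endpoint of the counting interval is $O(e^{-X})$, which multiplies the main term $e^{\delta X}$ to contribute an error of order $e^{(\delta-1)X}$, automatically absorbed into the power-saving remainder. Thus no cancellation or independent estimate on the short-interval counts is needed, and the main obstacle is simply bookkeeping the error terms carefully; all the analytic work has already been done inside Theorem~5.1(1).
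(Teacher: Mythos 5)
Your proposal is correct and follows essentially the same route as the paper's (very terse) proof: relate $|\log c_w|^{-1}$ to $e^{\operatorname{Re}(d_w)}$ via the expansion of $\log\coth$ and substitute $x \mapsto e^x$ in Theorem 5.1(1). Your version simply makes explicit the sandwich argument and the propagation of the exponentially small correction, which the paper treats as immediate.
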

\begin{proof}
	When $Re|\gamma|$ is large, $|\log\coth(|\gamma|/2)| \sim |e^{-|\gamma|}|$ . Replace $x$ in part (1) of Theorem \ref{thm_error} by $e^x$ since $|e^{-|\gamma|}| = e^{-Re(|\gamma|)} \ge 1/x$ is equivalent to $Re(|\gamma|) \le \log x$.
\end{proof}

The main term $N(x) \sim C_1e^{\delta x}$ has also appeared in \cite{ParPa} and \cite{Pol}, where it was proved by using equidistribution in \cite{ParPa} and Poincar\'e series and Theormodynamic Formalism in \cite{Pol}.

As a corollary of Theorem \ref{thm_error} (2), we obtain equidistribution of holonomy. 
\begin{cor}[Equidistribution of holonomy]
For any non-Fuchsian Schottky group, there exist $C>0$ and $0< d_1 <\delta$ such that for any $f \in C^2(S^1)$, we have $$\sum_{|\log c_w|^{-1}\le x} f\left(\frac{\log c_w}{|\log c_w|}\right) = Cx^{\delta}\int_0^1 f(e^{2\pi i t})dt + O(t^{d_1})$$ where the implied constant depends on the $C^2$-norm of $f$. Here $\delta$ is the Haudorff dimension of the limit set.
\end{cor}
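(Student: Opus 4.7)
The plan is to reduce the equidistribution statement to the bounds on $M_m(x)$ from Theorem \ref{thm_error} by Fourier expansion of $f$ on the circle. Writing $f(e^{2\pi i t}) = \sum_{m \in \bbZ} \hat{f}(m)\, e^{2\pi i m t}$ and noting that $\log c_w / |\log c_w|$ is a unit complex number of the form $e^{i\arg(\log c_w)}$, we obtain
$$\sum_{|\log c_w|^{-1} \le x} f\!\left(\frac{\log c_w}{|\log c_w|}\right) \;=\; \sum_{m \in \bbZ} \hat{f}(m)\, M_m(x).$$
The $m=0$ mode contributes the main term: $\hat{f}(0)\, M_0(x) = \bigl(\int_0^1 f(e^{2\pi i t})\, dt\bigr)\bigl(C_1 x^{\delta} + O(x^{d_1})\bigr)$ by Theorem \ref{thm_error} (1), which identifies the constant $C$ in the corollary as $C_1$. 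Everything else must be absorbed into an error that is $O(x^{d_1})$ with implied constant controlled by $\|f\|_{C^2}$.

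For $f \in C^2(S^1)$, two integrations by parts yield the standard decay $|\hat{f}(m)| \le A\,\|f\|_{C^2}/(1+m^2)$ for a universal constant $A$. Combining this naively with Theorem \ref{thm_error} (2), which gives $|M_m(x)| \le C(|m|+1)^{\beta} x^{d_1}$ with $\beta \in (1,2)$, produces a termwise bound that is \emph{not} summable, since $\sum_{m \ne 0}|m|^{\beta-2}$ diverges. The remedy is to truncate the Fourier sum at a scale $M$ to be chosen: for $1 \le |m| \le M$ I would use Theorem \ref{thm_error} (2); for $|m| > M$ I would use the trivial estimate $|M_m(x)| \le M_0(x) = O(x^{\delta})$, valid because each summand is a unit complex number. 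This yields
$$\left|\sum_{m \ne 0} \hat{f}(m)\, M_m(x)\right| \;\lesssim\; \|f\|_{C^2}\!\left( M^{\beta-1} x^{d_1} + M^{-1} x^{\delta}\right).$$

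Balancing the two terms by setting $M = x^{(\delta-d_1)/\beta}$ produces an error of order $x^{d_1'}$ with $d_1' = \delta - (\delta-d_1)/\beta$. Since $\beta > 1$ and $d_1 < \delta$, one checks directly that $0 < d_1 < d_1' < \delta$, so renaming $d_1'$ as $d_1$ (and keeping it strictly below $\delta$) recovers the asserted form of the error, with implied constant a multiple of $\|f\|_{C^2}$.

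The main obstacle is precisely the mismatch between the polynomial growth $(|m|+1)^{\beta}$ of $M_m(x)$ coming from the Dolgopyat estimate (Theorem \ref{thm_Dolg}) and the merely quadratic decay $|\hat{f}(m)| = O(1/m^2)$ supplied by $C^2$-smoothness: since $\beta$ may be arbitrarily close to $2$, absolutely convergent term-by-term summation fails, and the truncation-and-optimize step is essential. A stronger smoothness hypothesis (e.g.\ $C^k$ for $k \ge 3$) would give an immediate absolutely convergent bound, but under the stated $C^2$ hypothesis the split at the critical scale $M = x^{(\delta-d_1)/\beta}$ is what makes the argument work.
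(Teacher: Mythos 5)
Your proposal is correct, and it follows the same basic route as the paper: expand $f$ in a Fourier series, let the $m=0$ mode produce the main term via Theorem \ref{thm_error}(1), and control the nonzero modes via Theorem \ref{thm_error}(2) together with the decay $\hat f(m)=O(\|f\|_{C^2}|m|^{-2})$. The difference is that the paper's own proof is a single sentence that combines these two bounds termwise and writes ``Hence,'' whereas you correctly point out that the termwise combination is not summable: with $|M_m(x)|\lesssim(|m|+1)^{\beta}x^{d_1}$ and $\beta\in(1,2)$, the series $\sum_{m\neq 0}|m|^{\beta-2}$ diverges. Your repair --- truncating at $|m|\le M$, using the trivial bound $|M_m(x)|\le M_0(x)=O(x^{\delta})$ for the tail, and optimizing $M=x^{(\delta-d_1)/\beta}$ to get an error $O(\|f\|_{C^2}\,x^{\delta-(\delta-d_1)/\beta})$ --- is exactly the right move; the resulting exponent lies strictly between $d_1$ and $\delta$, so the corollary as stated (existence of some $0<d_1<\delta$) still holds after renaming. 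In short, your argument is not merely equivalent to the paper's: it supplies a step that the paper's one-line proof omits and genuinely needs, at the modest cost of a slightly worse (but still power-saving) exponent in the error term.
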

\begin{proof}
Since $f \in C^2(S^1)$, $f$ admits the following Fourier expansion $f(e^{2\pi i\theta}) = \sum_{n=-\infty}^{\infty}a_n(e^{2\pi i\theta})^n$ where $a_0 = \int_{0}^{1} f(e^{2\pi it}) dt$ and $a_n = O(|n|^{-2})$. Hence, $\displaystyle\sum_{|\log c_w|^{-1}\le x} f\left(\frac{\log c_w}{|\log c_w|}\right) = Cx^{\delta}\int_0^1 f(e^{2\pi i t})dt + O(|f|_{C^2}t^{d_1}).$
\end{proof}

\subsection{Proof of Theorem \ref{thm_error}}
In order to prove Theorem \ref{thm_error}, we first prove an estimate for the quantity $\widehat{\widehat{M}}_m(x) = \int_1^x \int_1^y M_m(z) dz.$

\begin{prop}\label{thm_M1error}
We have
\begin{enumerate}
\item If $m = 0$, there exists a number $0< d_1 <\delta$ such that for any $x \ge 1$, $$\widehat{\widehat{M}}_m(x) = C \dfrac{x^{\delta+2}}{\delta(\delta+1)(\delta+2)}+ O(x^{d_1 +2}).$$
\item If $m \neq 0$, there exists a number $0< d_1 <\delta, 1<\beta<2$ such that for any $x \ge 1$, $$\widehat{\widehat{M}}_m(x) = O((|m|+1)^{\beta}x^{d_1 +2}).$$
\end{enumerate}

\end{prop}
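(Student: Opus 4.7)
The plan is to apply a second-order Perron's formula (namely \eqref{eq_integral} with $k=2$) to express
\[
\widehat{\widehat{M}}_m(x) = \frac{1}{2\pi i}\int_{c-i\infty}^{c+i\infty} F(s,m)\,\frac{x^{s+2}}{s(s+1)(s+2)}\,ds
\]
for any fixed $c>\delta$, and then evaluate the integral by shifting the contour of integration leftward to a vertical line $\operatorname{Re}(s)=d_1$ with $\delta-\varepsilon<d_1<\delta$, where $\varepsilon>0$ is the constant from Proposition \ref{lem_|F|} and Proposition \ref{thm_simplepole}. The factor $1/[s(s+1)(s+2)]$, which decays like $|t|^{-3}$ on vertical lines, is exactly what is needed to beat the polynomial growth $|F(s,m)|\ll (|t|+|m|)^\beta$ with $1<\beta<2$ given by Proposition \ref{lem_|F|}, so the shifted integral will converge absolutely.

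First I would justify the shift by considering the truncated rectangle with vertices $c\pm iT$ and $d_1\pm iT$, bounding the two horizontal segments, and letting $T\to\infty$. On the horizontal segments $\operatorname{Im}(s)=\pm T$, the integrand is bounded in modulus by a constant multiple of $(T+|m|)^\beta\,x^{c+2}/T^3$, which tends to zero as $T\to\infty$ since $\beta<2$; hence these contributions vanish. By Proposition \ref{thm_simplepole}, the only pole of the integrand inside this rectangle is at $s=\delta$ when $m=0$ (the factor $1/[s(s+1)(s+2)]$ contributes no poles since $\delta>0$). Applying the residue theorem gives
\[
\widehat{\widehat{M}}_m(x) = \operatorname{Res}_{s=\delta}\Bigl[F(s,m)\,\frac{x^{s+2}}{s(s+1)(s+2)}\Bigr] + \frac{1}{2\pi i}\int_{d_1-i\infty}^{d_1+i\infty} F(s,m)\,\frac{x^{s+2}}{s(s+1)(s+2)}\,ds,
\]
with the residue term present only when $m=0$.

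For the case $m=0$, the simple pole at $s=\delta$ with positive residue (Proposition \ref{thm_simplepole}(1)) contributes exactly
\[
C\,\frac{x^{\delta+2}}{\delta(\delta+1)(\delta+2)},
\]
where $C=\operatorname{Res}_{s=\delta}F_0(s)>0$, which is the claimed main term. For $m\neq 0$ (and non-Fuchsian Schottky groups, so that Proposition \ref{thm_simplepole}(2) applies), no residue arises.

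In both cases the remainder is the shifted integral, which I would estimate by pulling $x^{d_1+2}$ outside and combining Proposition \ref{lem_|F|} with the decay of $1/[s(s+1)(s+2)]$:
\[
\biggl|\frac{1}{2\pi i}\int_{d_1-i\infty}^{d_1+i\infty}F(s,m)\,\frac{x^{s+2}}{s(s+1)(s+2)}\,ds\biggr|
\ll x^{d_1+2}\int_{-\infty}^{\infty}\frac{(|t|+|m|)^\beta}{(1+|t|)^3}\,dt \ll (|m|+1)^\beta\,x^{d_1+2},
\]
where the last inequality uses $\beta<2$ so that $\int(|t|+|m|)^\beta(1+|t|)^{-3}dt \ll (|m|+1)^\beta$. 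This yields the error term $O(x^{d_1+2})$ in part (1) and the full bound $O((|m|+1)^\beta x^{d_1+2})$ in part (2). The main technical obstacle is really just the bookkeeping on the vertical line: one must make sure the strip $\delta-\varepsilon<\operatorname{Re}(s)\le \delta$ is uniformly pole-free and that the bound of Proposition \ref{lem_|F|} applies uniformly for all $|t|>R$, patching in a trivial compact bound on $|t|\le R$ from continuity of $F_m$ on the compact piece of the contour.
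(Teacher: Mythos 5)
Your proposal is correct and follows essentially the same route as the paper: the paper likewise invokes the second-order Perron formula (Lemma \ref{lem_integral}), shifts the contour to a vertical line at $d_1\in(\delta-\varepsilon,\delta)$ via a truncated rectangle, collects the residue of the simple pole at $s=\delta$ when $m=0$, and controls the horizontal and shifted vertical pieces by combining the bound $|F_m(s)|\ll(|t|+|m|)^{\beta}$ with the $|t|^{-3}$ decay of $1/[s(s+1)(s+2)]$, using $\beta<2$. Your closing remark about patching in a bound for $|t|\le R$ on the shifted line is a point the paper itself treats only implicitly, so your write-up is, if anything, slightly more careful on that detail.
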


We first show that Theorem \ref{thm_error} follows from Proposition \ref{thm_M1error}.
\begin{proof}[Proof of Theorem \ref{thm_error}]
(1) The case $m=0:$
Denote $\widehat{M}_0(y) = \int_1^y M_0(z) dz$. Let $\eta \in (0,1)$ be a constant. We have,
$$\widehat{\widehat{M}}_0(x) - \widehat{\widehat{M}}_0(\eta x) = \int_{\eta x}^{x} \widehat{M}_0(y)dy \le (1-\eta) x\widehat{M}_0(x).$$
By Proposition \ref{thm_M1error}, $\widehat{\widehat{M}}_0(x) - \widehat{\widehat{M}}_0(\eta x) = \frac{C(1-\eta^{\delta+2})}{\delta(\delta+1)(\delta+2)}x^{\delta+2}+ O(x^{d+2})$. Therefore, $\widehat{M}_0(x) \ge \frac{C(1-\eta^{\delta+2})}{\delta(\delta+1)(\delta+2)(1-\eta)}x^{\delta+1} + O(x^{d+1}).$

Similarly, let $\eta' \in (1,2)$ be a constant. Then $\widehat{M}_0(\eta'x) - \widehat{M}_0(x) \ge (\eta'-1) xM_0(x)$ and $\widehat{M}_0(x) \le \frac{C(\eta'^{\delta+2}-1)}{\delta(\delta+1)(\delta+2)(\eta'-1)}x^{\delta+1}+ O(x^{d+1})$.

Applying the argument again, we get $M_0(x) =Cx^{\delta} + O(x^{d})$ for some constant $C>0$.

(2) The case $m \neq 0$ follows by the same argument.
\end{proof}

The rest of the section is devoted to the proof of Proposition \ref{thm_M1error}. We begin with the following key lemma which gives
an integral formula relating $\widehat{\widehat{M}}_m(x)$ and $F_m(s)$. The proof then follows by evaluating the integral using complex analysis.
\begin{lem} \label{lem_integral}
	For any $c > \delta$ and any real number $x \ge 1$, $$\widehat{\widehat{M}}_m(x) = \frac{1}{2\pi i} \int_{c-i \infty}^{c+i \infty} F_m(s) \frac{x^{s+2}}{s(s+1)(s+2)}ds.$$
\end{lem}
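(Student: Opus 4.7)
My plan is to establish the identity by a term-by-term evaluation of the contour integral, using the Dirichlet-type series representation $F_m(s) = \sum_w a_w\, n_w^{-s}$ with $a_w := (\log c_w/|\log c_w|)^m$ and $n_w := |\log c_w|^{-1}$. By Theorem \ref{thm_conv} this series converges absolutely on any vertical line $\Re(s) = c > \delta$, and the kernel $x^{s+2}/(s(s+1)(s+2))$ decays like $|t|^{-3}$ as $|t| \to \infty$ along that line. Together these two facts justify a Fubini interchange of the sum and the integral, giving
\[
\frac{1}{2\pi i}\int_{c-i\infty}^{c+i\infty} F_m(s)\,\frac{x^{s+2}}{s(s+1)(s+2)}\,ds \;=\; \sum_w a_w\,I_w(x),
\]
where $I_w(x) := \tfrac{1}{2\pi i}\int_{c-i\infty}^{c+i\infty} n_w^{-s}\,\frac{x^{s+2}}{s(s+1)(s+2)}\,ds$.

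Next I will evaluate each $I_w(x)$ by contour shifting. Pulling out $x^2$ and setting $y = x/n_w$ reduces the problem to the classical Mellin--Barnes integral $\tfrac{1}{2\pi i}\int_{c-i\infty}^{c+i\infty} y^s/(s(s+1)(s+2))\,ds$. When $y > 1$ I shift the contour far to the left and collect residues at the simple poles $s = 0, -1, -2$, obtaining $\tfrac12 - y^{-1} + \tfrac12 y^{-2} = (y-1)^2/(2y^2)$; when $y < 1$ I shift to the right, enclosing no poles, so the integral vanishes. Restoring the factor $x^2$ yields
\[
I_w(x) \;=\; \tfrac{1}{2}(x - n_w)^2\,\mathds{1}[n_w \le x].
\]

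To finish, I identify $\sum_w a_w I_w(x)$ with $\widehat{\widehat{M}}_m(x)$. Swapping the order of integration,
\[
\widehat{\widehat{M}}_m(x) = \int_1^x (x-z)\,M_m(z)\,dz = \sum_{n_w \le x} a_w \int_{\max(1,n_w)}^x (x-z)\,dz,
\]
and for the cofinitely many $w$ with $n_w \ge 1$ the inner integral equals $\tfrac12(x-n_w)^2$, matching the previous step exactly. The finitely many short words with $n_w < 1$ contribute only a polynomial of degree at most one in $x$, which is absorbed into the lower-order error term appearing in Proposition \ref{thm_M1error}.

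The main technical point will be making the Fubini step rigorous: one needs to check that the absolute convergence on $\Re(s) = c$ combines with the $|t|^{-3}$ decay of the kernel to produce an integrable dominating function, uniformly in $w$. Both ingredients are already available from Section \ref{sec_Fs} and the proof of Theorem \ref{thm_conv}, so the justification reduces to a standard application of dominated convergence; the remaining residue computation and the Fubini reorganisation of $\widehat{\widehat{M}}_m$ are routine once the interchange is in place.
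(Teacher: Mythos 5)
Your proof follows essentially the same route as the paper's: justify absolute convergence on the line $\Re(s)=c$ so the sum and integral can be interchanged, evaluate each term via the classical kernel integral $\frac{1}{2\pi i}\int_{c-i\infty}^{c+i\infty} y^s/(s(s+1)(s+2))\,ds$ by shifting the contour and collecting residues at $s=0,-1,-2$, and identify the resulting sum $\sum_{n_w\le x} a_w\,\tfrac12(x-n_w)^2$ with $\widehat{\widehat{M}}_m(x)$. The one point where you go beyond the paper is in flagging the finitely many words with $n_w<1$ (i.e.\ $|\log c_w|>1$), for which the lower limit $1$ in the definition of $\widehat{\widehat{M}}_m$ makes the stated identity exact only up to an $O(x)$ term; the paper absorbs this silently into its ``straightforward calculation,'' and your observation that the discrepancy is harmless for Proposition \ref{thm_M1error} is correct.
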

\begin{proof}
	We first show that the integral is absolutely convergent.
	\begin{align*}
	\int_{c-i \infty}^{c+i \infty} |F_m(s)| \left| \frac{x^{s+2}}{s(s+1)(s+2)}\right|ds &\le  \int_{c-i \infty}^{c+i \infty} F_m(c)  \frac{x^{c+2}}{\left|s(s+1)\right|}ds\\
	&\le F(c) x^{c+2} \int_{-\infty}^{\infty}  \frac{1}{c^2+t^2}dt < \infty
	\end{align*}
	
	Now we check the formula. 
	\begin{align*}
	\frac{1}{2\pi i}\int_{c-i \infty}^{c+i \infty} \frac{F_m(s) \cdot x^{s+2}}{s(s+1)(s+2)} ds & =\frac{1}{2\pi i} \sum_w \int_{c-i \infty}^{c+i\infty} \left(\frac{\log c_w}{|\log c_w|}\right)^m |\log c_w |^s \frac{x^{s+2}}{s(s+1)(s+2)} ds\\
	&= \frac{1}{2\pi i} \sum_w \int_{c-i \infty}^{c+i \infty} \left(\frac{\log c_w}{|\log c_w|}\right)^m \frac{(|\log c_w |x)^{s}x^2}{s(s+1)(s+2)} ds\\
	& = \sum_{x|\log c_w |\ge 1} \left(\frac{\log c_w}{|\log c_w|}\right)^m \dfrac{x^2}{2}\left(1-\dfrac{1}{x|\log c_w |}\right)^2\\
	& = \widehat{\widehat{M}}_m(x)
	\end{align*}
	The last equality can be seen by straightforward calculation. The second last equality comes from the following lemma.
	\begin{lem}
		Let $c>0$. For all $y>0$, 
		$$\frac{1}{2\pi i} \int_{c-i \infty}^{c + i \infty} \frac{y^s}{s(s+1)(s+2)}ds = \begin{cases}
		\frac{1}{2}(1-\frac{1}{y})^2, \text{ if } y>1\\
		0,  \text{ if }0<y \le 1
		\end{cases}$$
	\end{lem}
	\begin{proof}
		Straightforward calculation using the residue theorem.
	\end{proof}
\end{proof}

\begin{proof}[Proof of Proposition \ref{thm_M1error} (1)]
Let $c \in (\delta , \delta + \epsilon)$ and $d \in (\delta - \varepsilon , \delta)$ where $\epsilon$ is given in Dolgopyat's estimate (Theorem \ref{thm_Dolg}). By Lemma \ref{lem_integral}, $$ I =\widehat{\widehat{M}}_0(x) = \displaystyle\int_{c-i\infty}^{c+i\infty} F_0(s) \dfrac{x^{s+2}}{s(s+1)(s+2)}ds.$$ We shift the path of integration by letting $P_1$ be the segment $[c-i\infty, c-iT]$, $P_2$ be the segment $[c+iT, c+i\infty]$, $P_3$ be the segment $[c-iT, d-iT]$, $P_4$ be the segment $[d+iT, c+iT]$ and $P_5$ be the segment $[d-iT, d+iT]$. 

Applying the residue theorem, we obtain
\begin{align*}
I = Res|_{ s=\delta} + \sum_{i=1}^{5}\displaystyle\int_{P_i} F(s) \dfrac{x^{s+2}}{s(s+1)(s+2)} ds =C \dfrac{x^{\delta+2}}{\delta(\delta+1)(\delta+2)}+ \sum_{i=1}^{5} I_i 
\end{align*}
where $C$ is the residue of $F(s)$ at $s = \delta$ and $I_i = \displaystyle\int_{P_i} F(s) \dfrac{x^{s+2}}{s(s+1)(s+2)} ds$.

For $i=2$ (and similarly $i=1$), $s = c+it$ with $t>T$.
\begin{align*}
	|I_2| &\le \displaystyle\int_T^{\infty} |F_0(s)| \left|\dfrac{x^{s+2}}{s(s+1)(s+2)}\right|ds\\
	& \sim \displaystyle\int_T^{\infty} t^{\beta} \cdot \dfrac{ex^{\delta+2}}{t^3} dt \text{ (by Theorem } \ref*{thm_Dolg})\\
	& \sim x^{\delta+2} \left(\dfrac{-T^{\beta-2}}{\beta-2}\right) \to 0 \text{ as } T \to \infty \text{ (since } 1<\beta<2).
\end{align*}

For $i=4$ (and similarly $i=3$), $s = \sigma + iT$ with $\sigma \in [d,c]$.
$$\left|\frac{x^{s+2}}{s(s+1)(s+2)}\right| \le \dfrac{x^{c+2}}{T^3} = \dfrac{ex^{\delta
		+2}}{T^3}$$

Therefore,
\begin{align*}
	|I_4| &\le \displaystyle\int_d^{c} |F_0(s)| \dfrac{x^{\delta
			+2}}{T^3}d\sigma\\
	& \le \dfrac{x^{\delta+2}}{T^3} \left(\int_d^{c}|F(\sigma+ iT)|d\sigma \right) \\
	& \le \dfrac{x^{\delta+2}}{T^3} \left(T^{\beta} (c-d)\right) \to 0 \text{ as } T \to \infty.
\end{align*}

For $i=5$, $s = d + it$ with $t \in [-T,T]$.
$$\left|\frac{x^{s+2}}{s(s+1)(s+2)}\right| \le \dfrac{x^{d+2}}{|s||s+1||s+2|} \sim x^{d+2}\text{min} \{1, t^{-3}\} \text{ and } |F(s)| \le |t|^{\beta}$$

Therefore,
\begin{align*}
	|I_5| &\sim \displaystyle\int_{-T}^{T} |t|^{\beta} x^{d+2}\text{min} \{1, t^{-3}\} dt \\
	&=\displaystyle\int_{-1}^{1} |t|^{\beta} x^{d+2} dt +2 \displaystyle\int_{1}^{T} \frac{t^{\beta}}{t^3} x^{d+2} dt\\
	& = O( x^{d+2})
\end{align*}

Hence, $$\widehat{\widehat{M}}_m(x) = C \dfrac{x^{\delta+2}}{\delta(\delta+1)(\delta+2)}+ O(x^{d+2}).$$
\end{proof}

\begin{proof}[Proof of Proposition \ref{thm_M1error} (2)]
Again by Lemma \ref{lem_integral}, we need to evaluate 
\begin{align*}
\widehat{\widehat{M}}_m(x) &= \displaystyle\int_{c-i\infty}^{c+i\infty} F_m(s) \dfrac{x^{s+2}}{s(s+1)(s+2)}ds = I_5\\
& =  \displaystyle\int_{-T}^{T} (|t|+|m|)^{\beta} x^{d+2}\text{min} \{1, t^{-3}\} dt \\
&=\displaystyle\int_{-1}^{1} (|t|+|m|)^{\beta} x^{d+2} dt +2 \displaystyle\int_{1}^{T} (1+|m|)^{\beta}\frac{t^{\beta}}{t^3} x^{d+2} dt\\
& = O( (|m|+1)^{\beta}x^{d+2})
\end{align*}
The second last equality holds since $(|t|+|m|)^{\beta} \le (1+|m|)^{\beta}|t|^{\beta}$ for $|t|\ge 1$.
\end{proof}

%%%%%%%%%%%%%%%%%%%%%%%%%%%%%%%%%%%%%%%%%%%%%%%%%%%%%%%
%%%%%%%%%%%%%%%%%%%%%%%%%%%%%%%%%%%%%%%%%%%%%%%%%%%%%%%
%\newpage
\section{Quadratic polynomials} \label{sec_quad}
In this section, we discuss parallel counting results for quadratic polynomials $f_c(z) = z^2+c$ with $c$ lying outside of the Mandelbrot set $\mathcal{M}$. Recall that, given a complex parameter $c \in \bbC \setminus \mathcal{M}$, the Basmajian-type identity is formally given by (cf. \cite{He}) 
\begin{equation} \label{eq_id}
z_1 - (-z_1) = \sum_{w \in \{T_1,T_2\}^*} (-1)^\eta \Big( w(T_1(-z_1)) - w(T_2(-z_1)) \Big) = w(I)
\end{equation}
where $T_1(z) = \sqrt{z-c}$ and $T_2(z) = -\sqrt{z-c}$ are the two branches of $f_c^{-1}$, $z_1$ is the fixed point of $T_1$ and $\eta$ is the number of $T_2$'s in the word $w$. For notational convenience, we will denote by $w(I)$ the term in the right hand side series corresponding to the word $w$.

We proceed along the same lines as in the case of Schottky groups, namely we consider the following $L$-function
$$G(s,m) = \sum_{w \in \{T_1,T_2\}^*} \left(\frac{w(I)}{|w(I)|}\right)^m|w(I)|^s$$ for $s \in \bbC$ and $m \in \bbZ$ so that the right-hand-side of the identity (\ref{eq_id}) is given by $G(1,1)$. Using Thermodynamic Formalism, we prove analytic properties of $G(s,m)$ which are used to study the summatory function $$P_m(x) \defeq \sum_{|w(I)|^{-1}\le x} \left(\dfrac{w(I)}{|w(I)|}\right)^m.$$ The main goal of this section is to prove an asymptotic formula for $P_m(x)$ (Theorem \ref{thm_errorcx}).

\subsection{Constructing $G(s,m)$}
For $c \in \bbC \setminus \mathcal{M}$, the quadratic polynomial $f_c(z) = z^2 + c$ is expanding on the Julia set $J_c$ and there exists a Markov partition $\{P_1, P_2\}$ with respect to which $f_c$ is a Markov map and satisfies $P_1 \subset f_c(P_1)$, $P_2 \subset f_c(P_1)$, $P_1 \subset f_c(P_2)$, $P_2 \subset f_c(P_2)$. 

The augmented finite state automaton parametrizing all the words in the alphabet $\{1,2\}$ is shown in Figure \ref{fig:AugFSAquad}.
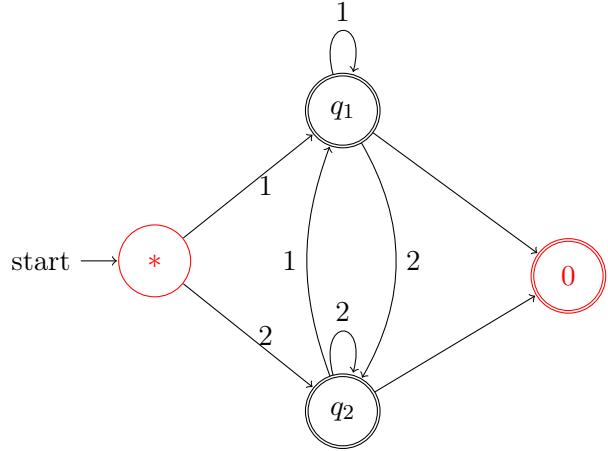
\begin{figure}[h!]
	\centering
	\begin{tikzpicture} [shorten >=0.5pt,node distance=0.7cm,auto] 
	\node[red][state, initial] (q0) at (0.5,1)  {$*$};
	\node[state,accepting] (q1) at (3,3) {$q_1$}; 
	\node[state,accepting] (q2) at (3,-1) {$q_2$};
	\node[red,state, accepting] (q9) at (6,0.8) {$0$};  
	\path[->] 
	(q0) edge [bend left=0, right] node [swap] {1} (q1)
	edge [bend left=0, right] node [swap] {2} (q2)
	(q1) edge [loop above] node [swap] {1} (q1) 
	edge [bend left=30] node {2} (q2)
	edge [above] node {} (q9)
	(q2) edge [loop above] node [swap] {2} (q2) 
	edge [bend left=20] node {1} (q1)
	edge [above] node {} (q9);
	\end{tikzpicture}
	\caption{Augmented FSA for quadratic polynomial} 
	\label{fig:AugFSAquad}
\end{figure}

Let $A$ be the adjacency matrix of the above directed graph. Note that $A$ is {\it not} aperiodic. However, there is a submatrix $B$ of $A$ which is aperiodic.
\[ A = \left[\begin{array}{cccc}
0 & 1 & 1 & 0\\
0 & 1 & 1 & 1\\
0 & 1 & 1 & 1\\
0 & 0 & 0 & 0\\ \end{array} \right] \text{ and }
B = \left[\begin{array}{cc}
1 & 1 \\
1 & 1 \\ \end{array} \right]\] 
Defines subshift of finite type $\Sigma_A$ and $\Sigma_B$.

The following lemma is a consequence of uniform hyperbolicity of $f_c$.
\begin{lem} \label{lem_lalleycx}
	There exist constants $C < \infty$, $0 < \kappa < 1$ such that for each word $w$ of length $n$, $|w(J_c)| \le C\kappa ^n$.
\end{lem}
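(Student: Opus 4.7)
The plan is to deduce the exponential diameter bound from the uniform expansion of $f_c$ on the Julia set, together with a bounded distortion (Koebe) argument on a fixed neighborhood of $J_c$. Since $c \notin \mathcal{M}$, the Julia set $J_c$ is a Cantor set contained in the repelling set of $f_c$; the critical point $0$ lies in the basin of infinity, so its forward orbit stays uniformly bounded away from $J_c$. Consequently $f_c$ is hyperbolic on $J_c$, i.e., there exist $\lambda > 1$ and $C_0 > 0$ such that
\[
|(f_c^n)'(z)| \ge C_0 \lambda^n \quad \text{for every } z \in J_c \text{ and } n \ge 1.
\]
I would begin by recalling this standard fact, which is the only place where the assumption $c \notin \mathcal{M}$ enters.

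Next, I would observe that each word $w = T_{i_1}\circ \cdots \circ T_{i_n}$ is an inverse branch of $f_c^n$ defined and holomorphic on a simply connected open neighborhood $U$ of $J_c$ chosen small enough to avoid the (finite) postcritical set. By the chain rule, for any $z \in U$ with $w(z) \in J_c$,
\[
|w'(z)| \;=\; \frac{1}{|(f_c^n)'(w(z))|} \;\le\; C_0^{-1} \lambda^{-n}.
\]
Here $\lambda^{-n}$ already gives the desired exponential decay at a single point; the content of the lemma is that we can transfer this pointwise bound to a bound on $\mathrm{diam}\,w(J_c)$.

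To do this, I would apply the Koebe distortion theorem on a slightly shrunk neighborhood $U' \Subset U$ containing $J_c$. Because the inverse branch $w$ is univalent on $U$, Koebe yields a constant $K$ (independent of $n$ and $w$) such that $|w'(z_1)|/|w'(z_2)| \le K$ for all $z_1, z_2 \in U'$. Fixing $U'$ to be connected with finite diameter $L \defeq \mathrm{diam}(U')$, any two points of $J_c$ can be joined by a path in $U'$ of length at most $L$; integrating $w'$ along such a path and using the distortion bound gives
\[
\mathrm{diam}\,w(J_c) \;\le\; L \cdot \sup_{z \in U'} |w'(z)| \;\le\; KL \cdot C_0^{-1} \lambda^{-n}.
\]
Setting $C \defeq KLC_0^{-1}$ and $\kappa \defeq \lambda^{-1} \in (0,1)$ finishes the proof.

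The only delicate point is the bounded distortion step, because $J_c$ is totally disconnected and one cannot invoke the mean value theorem between two points of $J_c$ directly. The fix, as above, is to pick the neighborhood $U$ once and for all so that all iterated inverse branches are univalent on $U$, which holds thanks to the postcritical normality guaranteed by $c \notin \mathcal{M}$; everything else is routine complex analysis.
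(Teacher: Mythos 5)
Your proof is correct and is precisely the argument the paper gestures at: the paper offers no proof of this lemma beyond the remark that it ``is a consequence of uniform hyperbolicity of $f_c$,'' and your combination of the hyperbolicity estimate $|(f_c^n)'| \ge C_0\lambda^n$ on $J_c$ with Koebe distortion on a fixed neighbourhood is the standard way to fill in that one-line justification. One small inaccuracy: the postcritical set $\{f_c^n(0): n\ge 1\}$ is not finite; it is an infinite discrete set escaping to infinity, so you should instead say that it is closed and disjoint from $J_c$, and choose a simply connected neighbourhood $V$ of $J_c$ avoiding the critical value $c$ with $f_c^{-1}(V)\subset V$, after which the iterated inverse branches are automatically univalent on $V$ and the rest of your argument goes through unchanged.
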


Let $\theta$ be as in the lemma above, we define a metric on $\Sigma_A$ by $$d(\underline{x},\underline{y}) = \kappa ^n$$ where $n = n(\underline{x},\underline{y})$ is the largest number so that the sequences $\underline{x}$ and $\underline{y}$ agree in the first $n$ terms.

\begin{prop}
	There exists a H\"older map $\tau: \Sigma_A \to \mathbb R$ such that for any integer $n \ge 1$, we have $$ \tau^n(i_0 i_1 \cdots i_{n} \dot 0) = - \log |w(I)|$$
	where $\tau^n(x) = \sum_{j=0}^{n-1} \tau(\sigma^j x)$, $w = l(i_0,i_1)\cdots l(i_{n-1},i_n)$.
\end{prop}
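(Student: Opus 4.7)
The plan is to mimic the proof of Proposition \ref{prop_holder1} in the Schottky setting, the simplification being that the quantity whose logarithm we must control is $|w(I)|$ directly, rather than $|\log c_w|$, so there is no outer composition with $\log|\log(\cdot)|$ to H\"older-estimate. First I would define $\tau$ on the dense subset
$$\Sigma_A^* = \{i_0 i_1 \cdots i_n \dot 0 \mid l(i_0,i_1)\cdots l(i_{n-1},i_n) \in \{T_1,T_2\}^*\}$$
by the telescoping cocycle formula
$$\tau(i_0 i_1 \cdots i_n \dot 0) = \log|\sigma(w)(I)| - \log|w(I)|$$
when $|w| \ge 1$, with $w = l(i_0,i_1) \cdots l(i_{n-1},i_n)$ and $\sigma(w) = l(i_1,i_2) \cdots l(i_{n-1},i_n)$, and set $\tau(\dot 0) = 0$. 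Summing this expression over $\sigma^j$ for $j = 0, 1, \ldots, n-1$ telescopes (one uses the convention $\sigma^n(w) = \emptyset$ with $\emptyset(I) = I$, normalized to absolute value one, or absorbs the boundary constant) to give exactly $\tau^n(i_0 i_1 \cdots i_n \dot 0) = -\log|w(I)|$, as required.

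The substantive content is the H\"older estimate on $\Sigma_A^*$; once that is in hand, $\tau$ extends uniquely and H\"older continuously from $\Sigma_A^*$ to the compact space $\Sigma_A$ by uniform continuity and density. Let $\underline{x}, \underline{y} \in \Sigma_A^*$ agree in the first $k$ entries, so $d(\underline{x}, \underline{y}) \le \kappa^k$, and let $w, u$ be the associated words. Then
$$|\tau(\underline{x}) - \tau(\underline{y})| \le \bigl|\log|w(I)| - \log|u(I)|\bigr| + \bigl|\log|\sigma(w)(I)| - \log|\sigma(u)(I)|\bigr|,$$
so it suffices to show $\bigl|\log|w(I)| - \log|u(I)|\bigr| = O(\kappa^{k\alpha})$ for some $\alpha \in (0,1)$, uniformly in the pair of words agreeing on their first $k$ letters.

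The key ingredient is a bounded distortion principle for the inverse branches $T_1, T_2$. Since $c \notin \mathcal{M}$, the map $f_c$ is uniformly expanding on a neighbourhood of the Julia set $J_c$, so $T_1, T_2$ extend to holomorphic contractions on a slightly larger disc. Writing $w = T_{i_0} \cdots T_{i_{k-1}} \circ v$ and $u = T_{i_0} \cdots T_{i_{k-1}} \circ v'$, where $v, v'$ are the respective tails, Koebe's distortion theorem applied to the composition $T_{i_0} \cdots T_{i_{k-1}}$ (whose image has diameter $O(\kappa^k)$ by Lemma \ref{lem_lalleycx}) gives
$$\frac{|w(I)|}{|u(I)|} = \frac{|(T_{i_0} \cdots T_{i_{k-1}})(v(T_1(-z_1))) - (T_{i_0} \cdots T_{i_{k-1}})(v(T_2(-z_1)))|}{|(T_{i_0} \cdots T_{i_{k-1}})(v'(T_1(-z_1))) - (T_{i_0} \cdots T_{i_{k-1}})(v'(T_2(-z_1)))|} = 1 + O(\kappa^k),$$
because the derivative of the common prefix is essentially constant across the small region containing the relevant points. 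Taking logarithms and raising to the H\"older exponent yields the required bound, and the same argument applied to $\sigma(w), \sigma(u)$ controls the second term.

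The main obstacle, and the only point that is not purely formal, is the bounded distortion / Koebe estimate: one must verify that the inverse branches of $f_c$ can be defined and bounded on a fixed neighbourhood of $J_c$ large enough to enclose all the points $v(T_j(-z_1))$, and that the distortion constants are uniform over the length of the common prefix. This uses hyperbolicity of $f_c$ on $J_c$ and compactness of the parameter selection, and parallels the use of S. Lalley's Lemma \ref{lem_lalley} in the proof of Lemma \ref{lem_Lip}.
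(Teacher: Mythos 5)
Your setup---defining $\tau$ on the dense set $\Sigma_A^*$ by the telescoping cocycle formula and extending by uniform continuity---is exactly the paper's, and your remark about normalizing the empty-word term is a fair point the paper also elides. The gap is in the H\"older estimate. Splitting $|\tau(\underline{x})-\tau(\underline{y})|$ by the triangle inequality into $\bigl|\log|w(I)|-\log|u(I)|\bigr|+\bigl|\log|\sigma(w)(I)|-\log|\sigma(u)(I)|\bigr|$ and bounding each term by $O(\kappa^{k\alpha})$ cannot work: two admissible sequences may agree in their first $k$ entries and then have tails of wildly different lengths, e.g.\ $w=T_1^{k}$ and $u=T_1^{k}T_2^{N}$, for which $d(\underline{x},\underline{y})=\kappa^{k+1}$ while $\bigl|\log|w(I)|-\log|u(I)|\bigr|\asymp N\log(1/\kappa)$ is unbounded in $N$. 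Correspondingly, the distortion claim $|w(I)|/|u(I)|=1+O(\kappa^{k})$ is false: even if the derivative of the common prefix $p=T_{i_0}\cdots T_{i_{k-1}}$ were exactly constant, you would be left with $|w(I)|/|u(I)|=|v(I)|/|v'(I)|$, the ratio of displacements of two unrelated tail words, which need not be near $1$; moreover the four points $v(T_j(-z_1))$, $v'(T_j(-z_1))$ need not lie in a common small region (the tails may begin with different letters), so Koebe only gives bounded distortion of $p$ over the set containing them, not $1+O(\kappa^{k})$.

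What is actually H\"older is the combination $\tau(\underline{x})=\log\bigl(|\sigma(w)(I)|/|w(I)|\bigr)$, and the cancellation between the two logarithms is precisely what your splitting discards. The paper keeps the ratio: since $w=T_{i_0}\circ\sigma(w)$, the divided difference gives $|\sigma(w)(I)|/|w(I)|=|f_c'(z)|$ up to an error controlled by $\mathrm{diam}\,w(J_c)=O(\kappa^{k})$, for some $z$ in the depth-$k$ cylinder determined by $i_0\cdots i_k$; two sequences agreeing to depth $k$ therefore produce evaluation points within $O(\kappa^{k})$ of each other (Lemma \ref{lem_lalleycx}), and $\log|f_c'|$ is Lipschitz on a neighbourhood of $J_c$ because $0\notin J_c$ for $c\notin\mathcal{M}$. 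Your bounded-distortion machinery is the right tool for justifying this divided-difference step, but it must be applied to the ratio $|\sigma(w)(I)|/|w(I)|$, not to $\log|w(I)|$ and $\log|u(I)|$ separately.
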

\begin{proof}
	We first define a map $r: \Sigma_A^* \to \mathbb R$ on the set $$\Sigma_A^* = \{i_0 i_1 \cdots i_{n}\dot 0 ~|~ l(i_0,i_1)\cdots l(i_{n-1},i_n) \in \{T_1,T_2 \}^*\}$$ by
	$$\tau(i_0 i_1 \cdots i_{n} \dot 0) = \log |\sigma(w)(I)| - \log |w(I)|$$
	for $|w| \ge 1$ and $r(\dot 0) = 0$. We denote $\sigma(w) = l(i_1,i_2)\cdots l(i_{n-1},i_n)$. Note that $\tau^n(w\dot 0) =  -\log |w(I)|$. Moreover, $\tau$ is H\"older continuous on $\Sigma_A^*$ since $\tau(w\dot0) = \log \left|\frac{\sigma(w)(I)}{w(I)}\right| = \log |f_c'(z)|$ for some $z \in w(J_c)$ and $\log |f_c'|$ is analytic away from $0$. %(by Lemma 6.2 \cite{La}).
	Finally, $\tau$ extends to a H\"older map on the entire shift space $\Sigma_A$ as $\Sigma_A^*$ is dense in $\Sigma_A$ and $\tau$ is uniformly continuous.
\end{proof}

\begin{prop}
	There exists a H\"older map $\theta: \Sigma_A \to \mathbb R/2\pi\mathbb Z$ such that for any integer $n \ge 1$, we have $$ \theta^n(i_0 i_1 \cdots i_{n} \dot 0) = \frac{w(I)}{|w(I)|}$$
	where $\theta^n(x) = \sum_{j=0}^{n-1} \theta(\sigma^j x)$, $w = l(i_0,i_1)\cdots l(i_{n-1},i_n)$.
\end{prop}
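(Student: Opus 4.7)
The plan is to adapt the strategy of Proposition \ref{prop_holder2} to the quadratic-polynomial setting, substituting the quantity $w(I)$ for the cross-ratio factor used in the Schottky case. First I would define $\theta$ on the dense subset
\[
\Sigma_A^* = \{i_0 i_1 \cdots i_n \dot 0 ~|~ l(i_0,i_1)\cdots l(i_{n-1},i_n) \in \{T_1,T_2\}^*\}
\]
by the telescoping rule
\[
\theta(i_0 i_1 \cdots i_n \dot 0) \;=\; \arg(w(I)) - \arg(\sigma(w)(I)) \pmod{2\pi\bbZ}
\]
for $|w| \ge 1$, together with $\theta(\dot 0) = 0$, where as usual $w = l(i_0,i_1)\cdots l(i_{n-1},i_n)$ and $\sigma(w) = l(i_1,i_2)\cdots l(i_{n-1},i_n)$. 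With this definition the sum $\theta^n(i_0\cdots i_n \dot 0)$ telescopes to $\arg(w(I))$ in $\bbR/2\pi\bbZ$, which under the identification $\bbR/2\pi\bbZ \cong S^1$ via $t \mapsto e^{it}$ is precisely $w(I)/|w(I)|$.

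The main technical point is Hölder continuity on $\Sigma_A^*$. Writing $w = T_{i_0} \circ \sigma(w)$ as holomorphic functions, and setting $a = T_1(-z_1), b = T_2(-z_1)$, one has
\[
\frac{w(I)}{\sigma(w)(I)} \;=\; (-1)^{\varepsilon(i_0)} \cdot \frac{T_{i_0}(\sigma(w)(a)) - T_{i_0}(\sigma(w)(b))}{\sigma(w)(a) - \sigma(w)(b)},
\]
where $\varepsilon(i_0) \in \{0,1\}$ records whether $l(i_0,i_1) = T_2$, contributing at most a locally constant shift of $\pi$ in argument. For holomorphic $T_{i_0}$ the slope on the right differs from $T_{i_0}'(z)$, for any $z \in \sigma(w)(J_c)$, by a factor $1 + O(|\sigma(w)(J_c)|)$. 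Since $c \notin \mathcal M$, the Julia set lies in a compact subset of $\bbC \setminus \{c\}$ on which $T_1$ and $T_2$ are univalent and $\log T_i'$ is holomorphic, hence Lipschitz. Combined with the geometric shrinking $|\sigma(w)(J_c)| \le C\kappa^{n-1}$ from Lemma \ref{lem_lalleycx}, this yields
\[
\bigl|\theta(u\dot 0) - \theta(v\dot 0)\bigr| \;\le\; C \kappa^{k}
\]
whenever $u,v \in \Sigma_A^*$ share their first $k$ symbols (in particular $i_0 = j_0$ and $i_1 = j_1$, so the sign corrections cancel), giving Hölder regularity with any exponent $\alpha\in(0,1)$.

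Since $\Sigma_A^*$ is dense in $\Sigma_A$ and $\theta$ is uniformly continuous, it extends uniquely to a Hölder map $\theta : \Sigma_A \to \bbR/2\pi\bbZ$. The anticipated obstacle is the multivaluedness of $\arg$: I handle it either by working throughout in $\bbR/2\pi\bbZ$ so only differences of arguments on nearby points appear in the telescoping, or equivalently by fixing a single branch of $\log T_i'$ on a simply connected neighbourhood of the Julia set — such a branch exists because $T_i'$ is nowhere zero there. In either formulation a globally consistent continuous choice is available, so no genuine branch-cut difficulty arises.
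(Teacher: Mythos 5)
Your proof is correct and follows essentially the same route as the paper's: define $\theta$ on the dense set $\Sigma_A^*$ by the telescoping difference of arguments, identify the increment $\theta(w\dot 0)$ with the argument of the derivative of the relevant inverse branch at a point of $\sigma(w)(J_c)$ (up to an error controlled by Lemma \ref{lem_lalleycx}) to get H\"older continuity, and extend by density. You are in fact somewhat more careful than the paper, which writes the increment as a difference of unit complex numbers rather than of arguments and does not explicitly address the $(-1)^{\eta}$ sign or the choice of branch of $\arg$.
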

\begin{proof}
	We first define a map $\theta: \Sigma_A^* \to \mathbb R$ on the set $$\Sigma_A^* = \{i_0 i_1 \cdots i_{n}\dot 0 ~|~ l(i_0,i_1)\cdots l(i_{n-1},i_n) \in \{T_1,T_2 \}^*\}$$ by
	$$\theta(i_0 i_1 \cdots i_{n} \dot 0) = \frac{w(I)}{|w(I)|} -\frac{\sigma(w)(I)}{|\sigma(w)(I)|}$$
	for $|w| \ge 1$ and $\theta(\dot 0) = 0$. We denote $\sigma(w) = l(i_1,i_2)\cdots l(i_{n-1},i_n)$. Moreover, $\theta$ is H\"older continuous on $\Sigma_A^*$ since $\theta(w\dot0) =  arg(f'(z))$ for some $z \in w(J_c)$ and $arg(f'(z))$ is analytic. By density of $\Sigma_A^*$ and uniform continuity of $\theta$, $\theta$ extends to a H\"older map on the entire shift space $\Sigma_A$.
\end{proof}

\begin{lem}
$\tau$ is strongly non-integrable.
\end{lem}
\begin{proof}
For notational convenience, we set $\tau(w) = r(\sigma w) - r(w)$ where $r(w) = \log |w(I)|$. Suppose $\tau$ is not SNI. Then the temporal distance function $\phi_{\zeta, \eta}(\underline{i}, \underline{j}) = r(\zeta\underline{j}) - r(\zeta\underline{i}) + r(\eta\underline{i}) - r(\eta \underline{j})$ is identically zero, i.e., $$r(\zeta\underline{j}) - r(\zeta\underline{i}) = r(\eta\underline{j}) - r(\eta \underline{i})$$ for any $\zeta, \eta, \underline{i}, \underline{j} \in \Sigma_A^+$.

Let $\zeta = \dot 0, \eta = 1\dot0, \underline{i} = 1\dot0$ and $\underline{j} = 2\dot0$. Then
$r(2) - r(1) = r(12) - r(11)$, i.e. $\dfrac{|2(I)|}{|1(I)|} = \dfrac{|12(I)|}{|11(I)|} = \dfrac{|T'_1(z)||2(I)|}{|T'_1(v)||1(I)|}$ for some $z \in T_2(I)$ and $v \in T_1(I)$. This implies that $|T'_1(z)| = |T'_1(v)|$ which is a contradiction as $|\sqrt{z-c}| \neq |\sqrt{v-c}|$ for $z \in T_2(I)$ and $v \in T_1(I)$.
\end{proof}

Given the H\"older potentials $\tau$ and $\theta$ constructed above, for $s = \sigma +it \in \bbC$ and $m \in \bbZ$, we apply $\mathcal L^n_{-s\tau, m\theta}$ to the constant function $\mathds{1}$ at the point $x = \dot 0$ and we obtain
\begin{align*}
\mathcal L_{-sr,m\theta}^n \mathds{1} (\dot 0)  &= \sum_{\sigma^n(y)=\dot 0} e^{-sr^n(y)+im\theta^n(y)}= \sum_{\sigma^n(y)=\dot 0} \left(e^{-r^n( y)} \right)^s (e^{i\theta^n(y)})^m\\
& =\sum_{|w|=n}\left(e^{\log |w(I)|}\right)^s \left(\frac{w(I)}{|w(I)|}\right)^m = \sum_{|w|=n} \left(\frac{w(I)}{|w(I)|}\right)^m |w(I)|^{s}.
\end{align*}

Therefore, for each $c \notin \mathcal{M}$, we (formally) define an $L$-function $G(s,m)$ by
\begin{equation}
G(s,m) = \sum_{n=1}^\infty \mathcal L_{-sr,m\theta}^n\mathds{1}(\dot 0).
\end{equation}

\subsection{Analytic properties of $G(s,m)$}
The spectral properties of transfer operators allow us to obtain the same analytic properties of $G(s,m)$ as those of $F(s,m)$ given in Section \ref{sec_anaFs}.

\begin{thm}[Domain of convergence] \label{thm_conv2}
	For each $m \in \bbZ$, $G(s,m)$ converges absolutely if and only if $Re(s) > \delta$, where $\delta$ is the Hausdorff dimension of the Julia set of $f_c$.
\end{thm}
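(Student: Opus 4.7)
The plan is to follow the same pathway used to prove Theorem \ref{thm_conv} in the Schottky setting, adapted to the quadratic polynomial context. Write $s = \sigma + it$ and recall that the $L$-function is built from the twisted transfer operators $\mathcal{L}_{-s\tau, m\theta}$ acting on $C^{\alpha}(\Sigma_A^+, \bbC)$, via
\[
G(s,m) = \sum_{n \ge 1} \mathcal{L}_{-s\tau, m\theta}^n \mathds{1}(\dot 0).
\]
Since the adjacency matrix $A$ for the augmented FSA is not aperiodic, but the submatrix $B$ (the $2 \times 2$ all-ones block corresponding to reduced words in $\{T_1,T_2\}^*$) is aperiodic, Lemma \ref{lem_samespec} asserts that $\mathcal{L}_{-s\tau, m\theta}$ and its restriction $\mathcal{L}_{-s\tau', m\theta'}$ to $\Sigma_B^+$ share the same spectrum, and both are quasi-compact. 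So the convergence question is controlled by the spectral radius of $\mathcal{L}_{-s\tau', m\theta'}$ on the mixing shift $\Sigma_B^+$.

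Next I would use the Complex Ruelle-Perron-Frobenius Theorem (Theorem \ref{thm_RPF}) part (2) applied to the mixing shift $\Sigma_B^+$: the spectral radius of $\mathcal{L}_{-s\tau', m\theta'}$ is at most $e^{P(-\sigma \tau')}$. Combined with the lower bound coming from iterating on $\mathds{1}$ and evaluating at $\dot 0$, this yields the asymptotic
\[
|G(s,m)| \asymp \sum_{n \ge 1} \bigl(e^{P(-\sigma \tau')}\bigr)^n,
\]
which converges (absolutely) if and only if $P(-\sigma \tau') < 0$. Since $\tau'$ is eventually positive (it represents $-\log|w(I)|$ summed along orbits, which grows by Lemma \ref{lem_lalleycx}) and hence cohomologous to a negative function after the sign flip, part (3) of Theorem \ref{thm_RPF} gives that $\sigma \mapsto P(-\sigma\tau')$ is real-analytic, strictly decreasing, with a unique zero $\sigma_0$. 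Thus convergence holds iff $\sigma > \sigma_0$.

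The remaining step is to identify $\sigma_0$ with the Hausdorff dimension $\delta$ of the Julia set $J_c$. This is Bowen's formula in the setting of hyperbolic Julia sets: since $f_c$ is expanding on $J_c$, the potential $-\sigma \log|f_c'|$ governs the dimensional pressure, and Bowen's theorem states that the unique $\sigma_0$ with $P(-\sigma_0 \log|f_c'|) = 0$ equals $\dim_H J_c$. Because the geometric Hölder potential $\tau$ constructed earlier satisfies $\tau^n(w \dot 0) = -\log|w(I)|$, a standard cohomology/comparison argument (compare Lemma \ref{lem_lalleycx} and the bounded distortion for $f_c$ on the Markov pieces) shows $\tau$ is cohomologous to $\log|f_c'|$ composed with the coding, so the two pressure functions coincide. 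Putting these together gives $\sigma_0 = \delta$ and finishes the proof. The main obstacle (though a mild one, as it closely parallels the Schottky case) is verifying that the coded potential $\tau$ is cohomologous to $\log|f_c'|$ carefully enough to invoke Bowen's formula; this uses bounded distortion on the Markov partition $\{P_1,P_2\}$ together with the expansivity of $f_c$ on $J_c$.
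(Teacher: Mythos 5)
Your proposal is correct and follows essentially the same route as the paper: the paper proves the Schottky analogue (Theorem \ref{thm_conv}) by passing to the aperiodic submatrix via Lemma \ref{lem_samespec}, comparing $G$ to the geometric series $\sum_n \left(e^{P(-\sigma\tau')}\right)^n$ via the Ruelle--Perron--Frobenius theorem, and identifying the unique zero $\sigma_0$ of the pressure with $\delta$ by Bowen's theorem and the comparison calculation in \cite{He}, then simply asserts the quadratic case is identical. Your added care about the cohomology between $\tau$ and $\log|f_c'|$ is a reasonable fleshing-out of what the paper delegates to the cited comparison calculation.
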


Again, an earlier result of the author follows immediately from the proposition.
\begin{cor} [\cite{He}]
	The right hand side series in Basmajian-type identity is absolutely convergent if and only if the Hausdorff dimension of the Julia set is strictly smaller than $1$.
\end{cor}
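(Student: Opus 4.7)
The plan is to derive the corollary as an immediate specialization of Theorem \ref{thm_conv2}, exactly in parallel with the analogous corollary proved in Section \ref{sec_anaFs} for Schottky groups. The key observation is that setting $s=1$ and $m=0$ in the $L$-function $G(s,m)$ yields
\[
G(1,0) \;=\; \sum_{w \in \{T_1,T_2\}^*} \left(\frac{w(I)}{|w(I)|}\right)^{0} |w(I)|^{1} \;=\; \sum_{w} |w(I)|,
\]
which is precisely the series of absolute values of the terms appearing on the right-hand side of the Basmajian-type identity \eqref{eq_id}.

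First I would invoke Theorem \ref{thm_conv2} in the specific case $m=0$: it states that $G(s,0)$ converges absolutely if and only if $\operatorname{Re}(s) > \delta$, where $\delta$ is the Hausdorff dimension of the Julia set $J_c$. Substituting $s=1$, the condition $\operatorname{Re}(s) > \delta$ becomes $1 > \delta$, i.e.\ $\delta < 1$. Combining this with the identification above, we conclude that the right-hand-side series in the Basmajian-type identity converges absolutely exactly when $\delta < 1$, which is the desired statement.

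There is essentially no obstacle here, since all the real work has already been packaged into Theorem \ref{thm_conv2}, whose proof in turn rests on the spectral properties of the transfer operator $\mathcal{L}_{-\sigma \tau'}$ together with the fact that the unique zero $\sigma_0$ of the topological pressure $P(-\sigma \tau)$ coincides with $\delta$ by Bowen's theorem, as in the Schottky case. One minor point worth flagging explicitly in the write-up is that $G(1,0)$ is literally the sum $\sum_w |w(I)|$ rather than $\sum_w w(I)$, so the claim is about \emph{absolute} convergence of the Basmajian-type series, matching the wording of the corollary.
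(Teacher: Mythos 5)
Your proposal is correct and is essentially identical to the paper's own proof, which simply applies Theorem \ref{thm_conv2} with $s=1$ and $m=0$. The only addition is your explicit observation that $G(1,0)=\sum_w |w(I)|$ is the absolute-value series of the identity, which is a harmless (and helpful) elaboration of the same argument.
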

\begin{proof}
	Apply the above theorem with $s=1, m=0$.
\end{proof}

\begin{prop} \label{lem_|G|}
	There exist $R>0, \epsilon>0$ and  $1<\beta <2$ such that if $|\sigma-\sigma_0|<\epsilon$ and $|t| > R$, 
	$$|G(s,m)| \le O((|t|+|m|)^{\beta}).$$
\end{prop}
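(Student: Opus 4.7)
The plan is to mirror the proof of Proposition \ref{lem_|F|} verbatim, leveraging the machinery that has already been set up for the quadratic polynomial case. The series defining $G(s,m)$ is a geometric sum of iterates of the twisted transfer operator $\mathcal{L}_{-s\tau, m\theta}$ applied to $\mathds{1}$ and evaluated at $\dot 0$, so a uniform upper bound of the form $\|\mathcal{L}_{-s\tau, m\theta}^n\| \le C(|t|+|m|)^{\beta}\rho^n$ with $\rho<1$ will be summed via $\sum_n \rho^n = \rho/(1-\rho)$ to yield the claim.

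To obtain such an operator bound I would invoke Theorem \ref{thm_Dolg}, the Dolgopyat-type estimate. The hypotheses to check are that $\tau$ is eventually positive, H\"older, and strongly non-integrable, and that $\theta$ is H\"older valued in $\mathbb{R}/2\pi\mathbb{Z}$. Eventual positivity of $\tau$ is immediate since for any word $w$ in $\{T_1,T_2\}^*$ one has $|w(I)| < 1$ for $|w|$ large (by Lemma \ref{lem_lalleycx}, $|w(I)| \le C\kappa^n$), so $-\log |w(I)| > 0$ eventually. H\"older regularity of $\tau$ and $\theta$ was established in the two preceding propositions in this section, and strong non-integrability of $\tau$ was verified in the lemma immediately preceding this proposition. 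Since Theorem \ref{thm_Dolg} is stated for mixing subshifts and $\Sigma_A$ is not mixing, I would pass to the aperiodic submatrix $B$ via Lemma \ref{lem_samespec}, whose restricted operators $\mathcal{L}_{-s\tau', m\theta'}$ share the spectrum of $\mathcal{L}_{-s\tau, m\theta}$ and for which the Dolgopyat estimate applies directly on $\Sigma_B$.

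With Theorem \ref{thm_Dolg} in hand, choose the constants $R, \epsilon, C, \beta, \rho$ it produces. For $|\sigma - \sigma_0| < \epsilon$ and $|t| > R$, I would estimate
\begin{align*}
|G(s,m)| &\le \sum_{n=1}^{\infty} |\mathcal{L}_{-s\tau, m\theta}^n \mathds{1}(\dot 0)| \\
&\le \sum_{n=1}^{\infty} \|\mathcal{L}_{-s\tau, m\theta}^n \mathds{1}\|_{\infty} \\
&\le C(|t|+|m|)^{\beta}\sum_{n=1}^{\infty}\rho^n \\
&= \frac{C\rho}{1-\rho}(|t|+|m|)^{\beta},
\end{align*}
which is precisely the claimed $O((|t|+|m|)^{\beta})$ bound.

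There is no genuine obstacle here: the hard analytic work was front-loaded into the construction of $\tau, \theta$, the verification of SNI, and into Theorem \ref{thm_Dolg} itself. The only mild point of care is that Theorem \ref{thm_Dolg} bounds the operator norm (on the H\"older space), so one must use that the $C^{\alpha}$-norm dominates the sup norm and that $\mathds{1}$ is a fixed H\"older function, ensuring the absorbed constant does not depend on $s$ or $m$; this is automatic. The non-mixing issue is handled cleanly by Lemma \ref{lem_samespec} exactly as in the Schottky case.
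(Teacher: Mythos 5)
Your proposal is correct and follows the same route as the paper: the paper proves this by verbatim analogy with Proposition \ref{lem_|F|}, i.e.\ by invoking the Dolgopyat-type estimate of Theorem \ref{thm_Dolg} for $\mathcal{L}_{-s\tau,m\theta}$ (justified by the SNI lemma and the H\"older constructions of $\tau$ and $\theta$ in this section, with the non-mixing issue handled via Lemma \ref{lem_samespec}) and summing the geometric series. Your additional verification of the hypotheses of Theorem \ref{thm_Dolg} is exactly what the paper leaves implicit.
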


\begin{prop} \label{thm_simplepole}
	Let $\varepsilon>0$ be as in the above proposition.
	\begin{enumerate}
		\item If $m=0$, the function $G_0(s)$ is analytic on the half-plane $Re(s) > \delta- \varepsilon$ except for a simple pole at $s = \delta$ with positive residue. It does not have any other poles on the line $Re(s) = \delta$.
		\item If $m \neq 0$, the function $G_m(s)$ is analytic on the half-plane $Re(s) > \delta- \varepsilon$.
	\end{enumerate}
\end{prop}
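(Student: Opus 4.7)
The plan is to mirror the proof of Proposition 4.3 essentially verbatim, now applied to the transfer operators $\mathcal{L}_{-s\tau, m\theta}$ built for $f_c$ in Section 6.1. All required inputs are in place: $\tau$ is H\"older and eventually positive (since $|w(I)| \le C\kappa^n$ by Lemma 6.5 makes $\tau^n \to \infty$), strongly non-integrable by the SNI lemma above, and Lemma 2.2 lets us transfer every spectral statement from the non-mixing $\Sigma_A^+$ to the aperiodic subshift $\Sigma_B^+$ where the Complex Ruelle--Perron--Frobenius Theorem 2.1 applies.

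For Part (1), with $m=0$, I would expand
$$G_0(s) = \sum_{n \ge 1} \mathcal{L}_{-s\tau}^n \mathds{1}(\dot 0),$$
and for $s$ real in a neighborhood of $\delta$, use Theorem 2.1(1) to decompose $\mathcal{L}_{-s\tau} = e^{P(-s\tau)}\bbP_{\lambda} + Q_s$ with $Q_s$ of spectral radius strictly smaller than $e^{P(-s\tau)}$. Summing the resulting geometric series yields
$$G_0(s) = \frac{\bbP_{\lambda}\mathcal{L}_{-s\tau}\mathds{1}(\dot 0)}{1 - e^{P(-s\tau)}} + B(s),$$
with $B(s)$ analytic in a neighborhood of $\delta$. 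Bowen's theorem, together with Theorem 2.1(3), guarantees that $P(-s\tau)$ has a unique simple zero at $s = \delta$, giving the simple pole of $G_0(s)$; the residue evaluates to $\bbP_{\lambda}\mathcal{L}_{-\delta\tau}\mathds{1}(\dot 0)/(-P'(-\delta\tau)\tau)$, which is positive because the pressure is strictly decreasing. To rule out further poles on $Re(s) = \delta$, I argue by contradiction: a putative pole at $s' = \delta + it_0$ with $t_0 \neq 0$ forces $\rho(\mathcal{L}_{-s'\tau}) = 1$, and Theorem 2.1(2) together with the spectral-projection argument of Proposition 4.3 (requiring $e^{ia}=1$) produces a cohomological equation
$$t_0\tau = u \circ \sigma - u + \Psi$$
with $u \in C^0(\Sigma_B^+, \bbR)$ and $\Psi \in C^0(\Sigma_B^+, 2\pi\bbZ)$. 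Evaluating this identity against the quadruple $\zeta\underline{i}, \zeta\underline{j}, \eta\underline{i}, \eta\underline{j}$ from the SNI lemma and telescoping, the $u$-terms cancel and the $\Psi$-terms contribute in $2\pi\bbZ$, so $t_0\phi_{\zeta,\eta}(\underline{i},\underline{j}) \in 2\pi\bbZ$, which contradicts SNI. Meromorphic extension into the strip $Re(s) > \delta - \varepsilon$ then follows from analytic perturbation of $\mathcal{L}_{-s\tau}$ away from $s = \delta$, using the Dolgopyat bound (Theorem 2.4) to handle the tails $|Im(s)|$ large.

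Part (2), with $m \ne 0$, is an immediate consequence of the Dolgopyat-type estimate Proposition \ref{lem_|G|}: for $|Im(s)| > R$ and $|Re(s) - \delta| < \epsilon$, Theorem 2.4 gives $\|\mathcal{L}_{-s\tau, m\theta}^n\| \le C(|t|+|m|)^{\beta}\rho^n$ with $\rho < 1$, so the defining series converges absolutely and uniformly on compacts, yielding analyticity. On the complementary range $|Im(s)| \le R$, the nontrivial twist $m\theta$ makes the cohomological identity of Theorem 2.1(2) fail (by the same SNI-style obstruction applied to $(\tau,\theta)$), so the spectral radius of $\mathcal{L}_{-s\tau, m\theta}$ drops strictly below $e^{P(-Re(s)\tau)} < 1$ for $Re(s)$ near $\delta$, producing analyticity by the same geometric-series argument.

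The main obstacle in this plan is the discreteness contradiction in Part (1): upgrading the single-quadruple SNI lower bound into a full non-lattice statement against every nonzero $t_0$. The delicate point is that SNI asserts only the existence of one quadruple with $|\phi_{\zeta,\eta}(\underline{i},\underline{j})| \ge \delta\kappa^{N\alpha}$, whereas to contradict $t_0\phi_{\zeta,\eta} \in 2\pi\bbZ$ one needs $\phi_{\zeta,\eta}$ to take a value in the open interval $(0, 2\pi/|t_0|)$. This is achieved by combining the SNI lower bound with the H\"older upper bound $|\phi_{\zeta,\eta}(\underline{i},\underline{j})| \le \tfrac{2}{1-\kappa}\|\tau\| d(\underline{i},\underline{j})^{\alpha}$ and exploiting the continuity of $\phi_{\zeta,\eta}$ in $(\underline{i},\underline{j})$ to tune the distance $d(\underline{i},\underline{j})$ into the desired regime.
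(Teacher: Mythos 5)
Your overall strategy matches what the paper intends: Proposition 6.10 is stated without proof, on the understanding that it is established exactly as the corresponding Proposition 4.3 for $F(s,m)$, and your Part (1) decomposition $G_0(s)=\bbP_{\lambda}\mathcal{L}_{-s\tau}\mathds{1}(\dot 0)/(1-e^{P(-s\tau)})+B(s)$, the residue computation, and the reduction of Part (2) to the Dolgopyat-type bound $|G_m(s)|=O((|t|+|m|)^{\beta})$ are precisely the paper's steps. There are two genuine divergences. First, to exclude poles on $Re(s)=\delta$ other than $s=\delta$, the paper (in the Schottky case) passes from the cohomological equation $tr'=u\circ\sigma-u+\Psi$ to periodic points and concludes that the set $\{\log\coth(|\gamma|/2)\}$ would be discrete, contradicting ergodicity of the geodesic flow; you instead evaluate the equation on the SNI quadruple to force $t_0\phi_{\zeta,\eta}(\underline{i},\underline{j})\in 2\pi\bbZ$. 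Your route is arguably the more natural one in the polynomial setting, where there is no geodesic flow (the paper's analogue would be a non-lattice statement for multipliers of repelling cycles of $f_c$, which it never spells out), and it reuses machinery already in place. Second, your treatment of Part (2) on the compact range $|Im(s)|\le R$ is more careful than the paper's one-line ``immediate consequence'' of the Dolgopyat bound, which strictly speaking only controls $|Im(s)|>R$; supplying the cohomological obstruction for the twisted operator on that range is a real improvement, though the width of the resulting pole-free strip a priori depends on $m$ and $R$, a uniformity issue the paper does not address either.

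The one soft spot is the last step of your non-lattice argument. You propose to land $\phi_{\zeta,\eta}$ in $(0,2\pi/|t_0|)$ by ``tuning $d(\underline{i},\underline{j})$ by continuity,'' but $\Sigma_A^+$ is a Cantor set, so there is no intermediate value theorem and $\phi_{\zeta,\eta}$ need not take intermediate values. What the H\"older upper bound actually gives is that $t_0\phi_{\zeta,\eta}(\underline{i},\underline{j})\in 2\pi\bbZ$ together with $|\phi_{\zeta,\eta}(\underline{i},\underline{j})|\le \tfrac{2}{1-\kappa}\|\tau\|\,d(\underline{i},\underline{j})^{\alpha}$ forces $\phi_{\zeta,\eta}$ to vanish identically on all pairs with $d(\underline{i},\underline{j})$ sufficiently small (depending on $t_0$). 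To contradict that, you need the SNI lower bound at arbitrarily small scales $N$, whereas the paper's definition asserts it only for a single $N$. This is harmless if you invoke SNI in the form Dolgopyat actually uses (valid for all large $N$), or if you fall back on the periodic-orbit argument adapted to the multipliers of $f_c$; but as written the reduction is not quite complete.
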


\subsection{Orbit counting in complex dynamics}
\begin{thm}\label{thm_errorcx}
	Let $\delta$ be the Hausdorff dimension of the Julia set.
	\begin{enumerate}
		\item For $m=0$, there exists $C_2>0, 0< d_2 <\delta$ such that for any $x \ge 1$, $$P_0(x) = \text{Card }\{w ~|~ |\log c_w | \ge 1/x\} = C_2 x^{\delta}+ O(x^{d_2}).$$
		\item For $m \neq 0$, there exists $0< d_2 <\delta, 1<\beta<2$ such that for any $x \ge 1$, $$P_m(x) = O((|m|+1)^{\beta}x^{d_2}).$$
	\end{enumerate}
\end{thm}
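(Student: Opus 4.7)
The plan is to imitate the argument used for Theorem \ref{thm_error} verbatim, substituting $G(s,m)$ for $F(s,m)$ and $P_m(x)$ for $M_m(x)$. All the analytic input needed is already in place: by Theorem \ref{thm_conv2}, $G(s,m)$ converges absolutely on $\{\mathrm{Re}(s)>\delta\}$; by Proposition \ref{thm_simplepole}, $G(s,m)$ extends meromorphically to a strip $\mathrm{Re}(s)>\delta-\varepsilon$ with only a simple pole at $s=\delta$ (of positive residue) when $m=0$ and with no poles when $m\neq 0$; and by Proposition \ref{lem_|G|}, we have the Dolgopyat-type bound $|G(s,m)|=O((|t|+|m|)^{\beta})$ on this strip for $|t|>R$. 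These are exactly the three ingredients the proof of Theorem \ref{thm_error} used.

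First I would introduce the twice-integrated summatory function $\widehat{\widehat{P}}_m(x)=\int_1^x\int_1^y P_m(z)\,dz\,dy$ and establish the analogue of Lemma \ref{lem_integral}: for any $c>\delta$ and any $x\ge 1$,
\[
\widehat{\widehat{P}}_m(x)=\frac{1}{2\pi i}\int_{c-i\infty}^{c+i\infty}G(s,m)\,\frac{x^{s+2}}{s(s+1)(s+2)}\,ds.
\]
Absolute convergence of this integral follows from Theorem \ref{thm_conv2} together with the decay of $|s(s+1)(s+2)|^{-1}$, and the identity itself comes from interchanging sum and integral and applying the standard contour evaluation of $\tfrac{1}{2\pi i}\int_{(c)} y^{s}/s(s+1)(s+2)\,ds$.

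Next, fix $c\in(\delta,\delta+\epsilon)$ and $d\in(\delta-\varepsilon,\delta)$ with $\epsilon$ from Theorem \ref{thm_Dolg}, and shift the contour of integration to the vertical line $\mathrm{Re}(s)=d$ through a rectangular box truncated at height $T$. Applying the residue theorem picks up, when $m=0$, a contribution $C_2\,x^{\delta+2}/(\delta(\delta+1)(\delta+2))$ from the simple pole at $s=\delta$, where $C_2>0$ is the residue of $G_0(s)$ there; for $m\ne 0$ no residue appears. The four remaining contour pieces are controlled exactly as in the proof of Proposition \ref{thm_M1error}: the horizontal segments at height $\pm T$ vanish as $T\to\infty$ because $\beta<2$ makes $T^{\beta}/T^{3}\to 0$; the tails of the original vertical line vanish for the same reason; and the integral on the shifted vertical line is bounded by splitting at $|t|=1$ and using $|G(s,m)|\ll (|t|+|m|)^{\beta}$, yielding $O(x^{d+2})$ in the $m=0$ case and $O((|m|+1)^{\beta}x^{d+2})$ in the $m\ne 0$ case (using $(|t|+|m|)^{\beta}\le (1+|m|)^{\beta}|t|^{\beta}$ for $|t|\ge 1$). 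This produces the asymptotic
\[
\widehat{\widehat{P}}_0(x)=C_2\,\frac{x^{\delta+2}}{\delta(\delta+1)(\delta+2)}+O(x^{d_2+2}),\qquad \widehat{\widehat{P}}_m(x)=O((|m|+1)^{\beta}x^{d_2+2})
\]
for some $0<d_2<\delta$.

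Finally, I would deduce the claimed asymptotic for $P_m(x)$ itself by the same two-step differencing argument used in the proof of Theorem \ref{thm_error}: monotonicity gives
\[
\widehat{\widehat{P}}_m(x)-\widehat{\widehat{P}}_m(\eta x)\le (1-\eta)x\,\widehat{P}_m(x)\le \widehat{\widehat{P}}_m(\eta' x)-\widehat{\widehat{P}}_m(x)
\]
for $\eta\in(0,1)$, $\eta'\in(1,2)$, from which the asymptotic of $\widehat{\widehat{P}}_m$ yields the corresponding asymptotic for $\widehat{P}_m(x)=\int_1^x P_m(z)\,dz$; applying the same differencing once more to $\widehat{P}_m$ gives the stated formula for $P_m(x)$. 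The main obstacle in transplanting the Schottky argument is really only bookkeeping: one must keep track of the $(|m|+1)^{\beta}$ factor through both differencing steps to confirm that it survives into the final bound, and verify that the residue $C_2$ is strictly positive (which follows from Proposition \ref{thm_simplepole}(1) for $G$, proved exactly as in the $F$ case by the monotonicity of the topological pressure). No essentially new analytic work is required.
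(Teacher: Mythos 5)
Your proposal is correct and follows essentially the same route as the paper: the paper likewise reduces Theorem \ref{thm_errorcx} to an estimate for $\widehat{\widehat{P}}_m(x)$ via the Perron-type integral $\frac{1}{2\pi i}\int_{c-i\infty}^{c+i\infty}G_m(s)\frac{x^{s+2}}{s(s+1)(s+2)}ds$, evaluated by contour shifting and the residue theorem exactly as in Proposition \ref{thm_M1error}, and then recovers $P_m(x)$ by the same differencing argument. The only caveat, present equally in the paper, is that for $m\neq 0$ the summands are complex so the monotonicity step should be applied to real and imaginary parts (or to $|P_m|$ via the triangle inequality) rather than to $P_m$ itself.
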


Analogously, the following corollary of Theorem \ref{thm_errorcx} (2) states the equidistribution of ``holonomy'' associated to the dynamical system $(J(f),f)$.
\begin{cor}
For any $c \notin \mathcal{M}$, there exist $C>0$ and $0< d_2 <\delta$ such that for any $f \in C^2(S^1)$, we have $$\sum_{|w(I)|^{-1}\le x} f\left(\frac{w(I)}{|w(I)|}\right) = Cx^{\delta}\int_0^1 f(e^{2\pi i t})dt + O(t^{d_2})$$ where the implied constant depends on the $C^2$-norm of $f$. Here $\delta$ is the Haudorff dimension of the Julia set.
\end{cor}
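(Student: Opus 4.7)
The strategy is to mirror the proof of the equidistribution corollary for Schottky groups (Corollary 5.3): expand the test function $f$ in a Fourier series on $S^1$ and then apply the $m$-by-$m$ asymptotics for $P_m(x)$ provided by Theorem \ref{thm_errorcx}. Since $f \in C^2(S^1)$, its Fourier expansion
\[
f(e^{2\pi i \theta}) = \sum_{n=-\infty}^{\infty} a_n e^{2\pi i n \theta}
\]
converges absolutely, with $a_0 = \int_0^1 f(e^{2\pi i t})\,dt$ and $a_n = O(|n|^{-2})$ for $n \neq 0$, where the implied constant is controlled by $\|f\|_{C^2}$. Writing $\frac{w(I)}{|w(I)|} = e^{2\pi i \theta_w}$ for an angle $\theta_w$, we have the termwise identity
\[
\sum_{|w(I)|^{-1} \le x} f\!\left(\frac{w(I)}{|w(I)|}\right) = \sum_{m \in \bbZ} a_m \sum_{|w(I)|^{-1} \le x} \left(\frac{w(I)}{|w(I)|}\right)^m = \sum_{m \in \bbZ} a_m \, P_m(x),
\]
and the interchange of summations is justified by the absolute convergence of the Fourier series together with the trivial bound $|P_m(x)| \le P_0(x)$.

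Next I would separate the $m=0$ term, which by Theorem \ref{thm_errorcx} (1) gives
\[
a_0 \, P_0(x) = a_0 \left( C_2 x^{\delta} + O(x^{d_2}) \right) = C \, x^{\delta} \int_0^1 f(e^{2\pi i t})\,dt + O(\|f\|_{C^2} \, x^{d_2}),
\]
with $C = C_2$. For the remaining terms, Theorem \ref{thm_errorcx} (2) gives a uniform bound $|P_m(x)| = O((|m|+1)^{\beta} x^{d_2})$ with $1 < \beta < 2$, so
\[
\Bigl| \sum_{m \neq 0} a_m P_m(x) \Bigr| \le \|f\|_{C^2} \, x^{d_2} \sum_{m \neq 0} \frac{(|m|+1)^{\beta}}{|m|^2}.
\]
The series on the right converges precisely because $\beta < 2$, yielding an error of size $O(\|f\|_{C^2}\, x^{d_2})$. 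Combining the two contributions gives the claimed asymptotic (after correcting the typo $t^{d_2}$ to $x^{d_2}$).

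The only real subtlety, and the only place where anything substantive is used, is the uniformity in $m$ of the error estimate in Theorem \ref{thm_errorcx} (2) — specifically the polynomial dependence $(|m|+1)^{\beta}$ with $\beta<2$, which is exactly what the Dolgopyat-type bound of Theorem \ref{thm_Dolg} delivers for the twisted transfer operator $\mathcal{L}_{-s\tau, m\theta}$. Since $\beta < 2$ is strictly smaller than the decay exponent $2$ of $C^2$ Fourier coefficients, the tail sum over $m$ converges. If one only had $C^1$ regularity on $f$, this step would fail, which explains the hypothesis $f \in C^2(S^1)$ and the appearance of the $C^2$-norm in the implied constant.
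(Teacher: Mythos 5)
Your overall strategy is exactly the paper's: the paper proves the Schottky analogue (the equidistribution corollary in Section 5) by Fourier-expanding $f$, feeding the $m=0$ term into part (1) of the counting theorem and the $m\neq 0$ terms into part (2), and the quadratic-polynomial corollary is proved verbatim the same way from Theorem \ref{thm_errorcx}. Your setup (absolute convergence of the Fourier series, the interchange of sums via $|P_m(x)|\le P_0(x)$, and the identification of the main term) is fine. However, the step you single out as ``the only real subtlety'' contains a genuine error: the series $\sum_{m\neq 0}(|m|+1)^{\beta}|m|^{-2}$ does \emph{not} converge for $1<\beta<2$. Its general term is asymptotically $|m|^{\beta-2}$, and $\sum_m |m|^{-(2-\beta)}$ converges only when $2-\beta>1$, i.e.\ $\beta<1$ --- precisely the range excluded by the Dolgopyat bound. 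So the tail sum as you have written it diverges, and the claim that convergence holds ``precisely because $\beta<2$'' is false. (To be fair, the paper's own one-line proof of the Schottky version glosses over exactly the same point, so you have faithfully reproduced its argument; but the step does not close as stated.)

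The corollary is still recoverable with a small additional idea: truncate the Fourier series at $|m|\le M$ and treat the two ranges differently. For $0<|m|\le M$ use the Dolgopyat-type bound $|P_m(x)|=O((|m|+1)^{\beta}x^{d_2})$, giving a contribution $O(M^{\beta-1}x^{d_2})$ against $|a_m|=O(|m|^{-2})$; for $|m|>M$ use the trivial bound $|P_m(x)|\le P_0(x)=O(x^{\delta})$, giving a tail $O(M^{-1}x^{\delta})$. Optimizing $M=x^{(\delta-d_2)/\beta}$ balances the two and yields an error $O\bigl(\|f\|_{C^2}\,x^{\delta-(\delta-d_2)/\beta}\bigr)$, which is a genuine power saving since $(\delta-d_2)/\beta>0$; the corollary then holds with $d_2$ replaced by this larger (but still $<\delta$) exponent. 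Alternatively, demanding $f\in C^{k}$ with $k\ge 3$ makes $a_m=O(|m|^{-3})$ and your original termwise summation goes through unchanged. Either repair should be stated explicitly; as written, the final estimate is not justified.
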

Similar to the case of Schottky groups, Theorem \ref{thm_errorcx} follows from the following proposition which gives an estimate for the quantity $\widehat{\widehat{P}}_m(x) = \int_1^x \int_1^y P(z) dz.$ 

\begin{prop}\label{thm_P1error}
We have
\begin{enumerate}
	\item If $m = 0$, there exists a number $0< d_2 <\delta$ such that for any $x \ge 1$, $$\widehat{\widehat{P}}_m(x) = C \dfrac{x^{\delta+2}}{\delta(\delta+1)(\delta+2)}+ O(x^{d_2 +2}).$$
	\item If $m \neq 0$, there exists a number $0< d_2 <\delta, 1<\beta<2$ such that for any $x \ge 1$, $$\widehat{\widehat{P}}_m(x) = O((|m|+1)^{\beta}x^{d_2 +2}).$$
\end{enumerate}
\end{prop}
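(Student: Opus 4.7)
The plan is to mirror the contour-shift argument used to prove Proposition \ref{thm_M1error}, with the roles of $F_m(s)$ and $M_m(x)$ taken over by $G_m(s)$ and $P_m(x)$. The three ingredients are already in place in Section \ref{sec_quad}: a Perron-type integral representation, the analytic continuation and simple-pole structure at $s=\delta$ from Proposition \ref{thm_simplepole} (for the quadratic case), and the polynomial bound $|G_m(s)| \le O((|t|+|m|)^{\beta})$ from Proposition \ref{lem_|G|}. So the whole argument is formal once these are invoked.

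First I would establish, for any $c>\delta$ and $x\ge 1$, the identity
\[
\widehat{\widehat{P}}_m(x) \;=\; \frac{1}{2\pi i}\int_{c-i\infty}^{c+i\infty} G_m(s)\,\frac{x^{s+2}}{s(s+1)(s+2)}\,ds.
\]
This is proved by exactly the same manipulation as in Lemma \ref{lem_integral}: absolute convergence on the vertical line $Re(s)=c$ follows from $|G_m(s)|\le G_0(c)$ there combined with the $1/(c^2+t^2)$ factor, and the identity itself reduces to the standard Mellin-type lemma
\[
\frac{1}{2\pi i}\int_{c-i\infty}^{c+i\infty}\frac{y^s}{s(s+1)(s+2)}\,ds \;=\; \tfrac{1}{2}\bigl(1-1/y\bigr)^2\mathds{1}_{y>1},
\]
applied term-by-term to $y=|w(I)|x$ and then summed against the character $(w(I)/|w(I)|)^m$.

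Next I would push the contour from $Re(s)=c\in(\delta,\delta+\varepsilon)$ to $Re(s)=d\in(\delta-\varepsilon,\delta)$ through a tall rectangle with top and bottom at $\pm iT$, and let $T\to\infty$. By Proposition \ref{thm_simplepole}, the only pole enclosed is $s=\delta$, which is present only when $m=0$ and contributes
\[
\operatorname{Res}_{s=\delta} G_0(s)\,\frac{x^{s+2}}{s(s+1)(s+2)} \;=\; C\,\frac{x^{\delta+2}}{\delta(\delta+1)(\delta+2)}
\]
for some $C>0$ (the residue of $G_0$ at $\delta$). On the two horizontal pieces the integrand is bounded above by $T^{\beta}\cdot x^{c+2}/T^3\cdot(c-d)$ via Proposition \ref{lem_|G|}, which tends to $0$ since $\beta<2$; on the far tails of the original vertical line the integrand is bounded by $t^{\beta}\cdot x^{c+2}/t^3$, which is integrable in $t$ and vanishes as $T\to\infty$. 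What remains is the integral over the shifted line $Re(s)=d$, which I bound by
\[
\int_{-\infty}^{\infty}(|t|+|m|)^{\beta}\,x^{d+2}\,\min\{1,|t|^{-3}\}\,dt.
\]
Splitting at $|t|=1$ and using $(|t|+|m|)^{\beta}\le (1+|m|)^{\beta}|t|^{\beta}$ for $|t|\ge 1$, this is $O((|m|+1)^{\beta}x^{d+2})$, which for $m=0$ collapses to $O(x^{d+2})$. Combining the residue contribution (present iff $m=0$) with this error gives the two cases of Proposition \ref{thm_P1error}.

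I do not anticipate a genuine obstacle: the only step with any content is verifying that the H\"older potentials constructed in Section \ref{sec_quad} for $f_c$ really do satisfy all the hypotheses needed to invoke Proposition \ref{lem_|G|} and the simple-pole statement uniformly in the quadratic setting, and this has been set up earlier in the section (eventual positivity of $\tau$, the SNI lemma, and the reduction to the aperiodic submatrix $B$ via Lemma \ref{lem_samespec}). Once those are accepted, the proof is a line-by-line translation of the proof of Proposition \ref{thm_M1error}, and Theorem \ref{thm_errorcx} then follows from Proposition \ref{thm_P1error} by the same sandwiching argument $\widehat{\widehat{P}}_m(x)-\widehat{\widehat{P}}_m(\eta x)\le (1-\eta)x\widehat{P}_m(x)$ used at the start of Section \ref{sec_count}.
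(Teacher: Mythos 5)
Your proposal is correct and follows exactly the route the paper takes: the paper proves Proposition \ref{thm_P1error} by citing the Perron-type integral representation for $\widehat{\widehat{P}}_m(x)$ in terms of $G_m(s)$ and repeating the contour-shift and residue argument from the proof of Proposition \ref{thm_M1error}, using the pole structure and the Dolgopyat-type bound $|G_m(s)| \le O((|t|+|m|)^{\beta})$ established earlier in Section \ref{sec_quad}. Nothing further is needed.
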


Proposition \ref{thm_P1error} is proved the same way as Proposition \ref{thm_M1error} by applying the residue theorem to the integral $$\widehat{\widehat{P}}_m(x) = \frac{1}{2\pi i} \int_{c-i \infty}^{c+i \infty} G_m(s) \frac{x^{s+2}}{s(s+1)(s+2)}ds$$
where $c > \delta$ and $x \ge 1$.


\begin{thebibliography}{8}
\bibitem{Bas} A. Basmajian, {\it The orthogonal spectrum of a hyperbolic manifold}, American Journal of Mathematics, {\bf 115} (1993), no. 5, 1139–1159.

%\bibitem{Bers}L. Bers, {\it Automorphic forms for Schottky groups}, Advances in Mathematics, {\bf 16} (1975), 332–361.

\bibitem{Bowen} R. Bowen, {\it Hausdorff dimension of quasi-circles}, IHES Publ. Math. {\bf 50} (1979), 1–25.

\bibitem{Dol} D. Dolgopyat, {\it Prevalence of rapid mixing II}, Ergodic Theory and Dynamical Systems. {\bf 20} (2000), 1045-1059.

\bibitem{Gui} L. Guillop\'e, {\it Sur la distribution des longueurs des g\'eode\'esiques ferm\'ees d’une surface compacte
\`a bord totalement g\'eode\'esique}, Duke Math. J. {\bf 53} (1986), 827–848.

\bibitem{He} Y. M. He, {\it Basmajian-type identities and Hausdorff dimension of limit sets}, Ergodic Theory and Dynamical Systems, 1-21. doi:10.1017/etds.2016.130.

\bibitem{Hu} H. Huber, {\it Uber eine neue Klasse automorpher Funktionen und ein Gitterpunkt problem in de hyperbolishen Ebene}, Comment. Math. Helv. {\bf 30} (1956), 20-62.

\bibitem{Hu1} H. Huber, {\it Zur analytischen theorie hyperbolischer Raum formen und Bewegungsgruppen}, Math. Ann. {\bf 142} (1961), 385-398.

\bibitem{La} S. Lalley, {\it Renewal theorems in symbolic dynamics, with applications to geodesic flows, noneuclidean tessellations and their fractal limits}, Acta Math. {\bf 163} (1989), 1-55.

\bibitem{Margulis} G. Margulis, {\it On some applications of ergodic theory to the study of manifolds of negative curvature}, Functional Anal. Appl. {\bf 3} (1969), 89-90.

\bibitem{Naud} F. Naud, {\it Expanding maps on Cantor sets and analytic continuation of zeta functions}, Ann. Sci. Ec. Norm. Super. {\bf 38} (2005), 116-153.

\bibitem{Oh} H. Oh and D. Winter, {\it Prime number thoerems and holonomies for hyperbolic rational maps}, Invent. Math. {\bf 208} (2017), 401-440.

\bibitem{ParPa} J. Parkkonen and F. Paulin, {\it Counting common perpendicular arcs in negative curvature}, Ergodic Theory and Dynamical Systems, {\bf 37} (2017), 900-938.

\bibitem{PP} W. Parry and M. Pollicott, {\it Zeta functions and the periodic orbit structure of hyperbolic dynamics}, Asterisque {\bf 187-88} (1990), 1-268.

\bibitem{Pol} M. Pollicott, {\it The Schottky-Klein prime function and counting functions for Fenchel double crosses}, Preprint.

\bibitem{PolSharp} M. Pollicott and R. Sharp, {\it Comparison theorems and orbit counting in hyperbolic geometry}, Trans. A. M. S., {\bf 350} (1998), 473-499.

\bibitem{PolSharp2} M. Pollicott and R. Sharp, {\it Exponential error terms for growth functions on negatively curved surfaces}, Amer. J. Math., {\bf 120} (1998), 1019-1042.

\bibitem{Sar} P. Sarnak, {\it Prime Geodesic Theorems}, Stanford University, 1980.
\end{thebibliography}
\end{document}